\newtheorem{thm}{Theorem}[section]
\newtheorem{prop}[thm]{Proposition}
\newtheorem{cor}[thm]{Corollary}
\newtheorem{lemma}[thm]{Lemma}
\theoremstyle{definition}
\newtheorem{defn}[thm]{Definition}
\newtheorem{Ex}[thm]{Example}
\theoremstyle{remark}
\newtheorem{Rmk}[thm]{Remark}
\newenvironment{red}
{\relax\color{red}}
{\hspace*{.5ex}\relax}
\newcommand{\ber}{\begin{red}}
\newcommand{\er}{\end{red}}
\newenvironment{verd}
{\relax\color{magenta}}
{\hspace*{.5ex}\relax}
\newcommand{\bg}{\begin{verd}}
\newcommand{\eg}{\end{verd}}
\numberwithin{equation}{subsection}
\newcommand{\Z}{\mathbb{Z}}
\newcommand{\Q}{\mathbb{Q}}
\newcommand{\C}{\mathbb{C}}
\newcommand{\A}{\mathbb{A}}
\newcommand{\g}{\mathfrak{g}}
\newcommand{\Hom}{\mathrm{Hom}}
\newcommand{\End}{\mathrm{End}}
\newcommand{\wt}{{\rm wt}}
\newcommand{\Span}{{\rm Span}}
\newcommand{\Top}{{\rm Top}}
\newcommand{\Soc}{{\rm Soc}}
\newcommand{\Rad}{{\rm Rad}}
\newcommand{\Mat}{{\rm Mat}}
\newcommand{\proj}{\mathrm{proj}}
\newcommand{\Ker}{\mathrm{Ker}}
\newcommand{\Tr}{\mathrm{Tr}}
\newcommand{\rlQ}{\mathsf{Q}}   
\newcommand{\wlP}{\mathsf{P}}   
\newcommand{\weyl}{\mathsf{W}}  
\newcommand{\cmA}{\mathsf{A}}  
\newcommand{\ST}{\mathsf{ST}}   
\newcommand{\res}{\mathrm{res}}   
\newcommand{\F}{\mathcal{F}}   
\newcommand{\bR}{\mathbf{k}}   
\newcommand{\sB}{\mathbb{B}}   
\newcommand{\fqH}{R^{\Lambda_0}}   
\renewcommand{\Im}{\operatorname{Im}}
\renewcommand{\Ker}{\operatorname{Ker}}
\newcommand{\sBox}[1]
{
\xy
(-2,-2)*{};(2,-2)*{} **\dir{-};
(-2,2)*{};(2,2)*{} **\dir{-};
(-2,-2)*{};(-2,2)*{} **\dir{-};
(2,-2)*{};(2,2)*{} **\dir{-};
(0,0)*{#1};
\endxy
}
\newcommand{\ssBox}[1]
{
\xy
(-1.5,-1.5)*{};(1.5,-1.5)*{} **\dir{-};
(-1.5,1.5)*{};(1.5,1.5)*{} **\dir{-};
(-1.5,-1.5)*{};(-1.5,1.5)*{} **\dir{-};
(1.5,-1.5)*{};(1.5,1.5)*{} **\dir{-};
(0,0)*{ _{#1}};
\endxy
}
\begin{document}

\title[Representation type of finite quiver Hecke algebras of type $D^{(2)}_{\ell+1}$]
{Representation type of finite quiver \\
Hecke algebras of type $D^{(2)}_{\ell+1}$}

\author[Susumu Ariki]{Susumu Ariki $^1$}
\thanks{$^1$ S.A. is supported in part by JSPS, Grant-in-Aid for Scientific Research (B) 23340006.}
\address{Department of Pure and Applied Mathematics, Graduate School of Information
Science and Technology, Osaka University, Toyonaka, Osaka 560-0043, Japan}
\email{ariki@ist.osaka-u.ac.jp}

\author[Euiyong Park]{Euiyong Park } 
\address{Department of Mathematics, University of Seoul, Seoul 130-743, Korea}
\email{epark@uos.ac.kr}


\begin{abstract}
We give Erdmann-Nakano type theorem for the finite quiver Hecke algebras $\fqH(\beta)$
of affine type $D^{(2)}_{\ell+1}$, which tells their representation type. If $\fqH(\beta)$
is not of wild representation type, we may compute its stable Auslander-Reiten quiver.
\end{abstract}

\maketitle


\vskip 2em

\section*{Introduction}

This paper is the second of our series of papers on the representation type of finite quiver Hecke algebras.
Categorification is popular in various fields of mathematics recent days. One purpose of categorification is for deeper
understanding of the objects to be categorified. But there is another purpose: categorification is a method to relate
various categories in a systematic way. Categorification of integrable modules over a Kac-Moody Lie algebra is such an example, and
in early 1990's, a categorification of integrable modules over an affine Kac-Moody algebra of type $A^{(1)}_\ell$
was found to be useful for studying module categories of finite dimensional Hecke algebras. They were studied by the first author
and his collaborators. Then, generalization of this categorification to other Lie types by using cyclotomic quiver Hecke algebras,
which are also called cyclotomic Khovanov-Lauda-Rouquier algebras, has atrracted interests of various mathematicians in 2000's.

In any representation theory, classification of irreducible modules is the starting point.
But remaining at that stage is not very interesting and we must handle non-semisimple modules. To obtain a result,
one studies a special kind of non-semisimple modules, and various methods have been developed in each representation theory.

In our context, we are given new type of self-injective algebras, cyclotomic quiver Hecke algebras, and it is worth pursuing to develop new ways of obtaining results, namely,
proof techniques that combine those categorification of integrable modules over an affine Kac-Moody algebra with classical techniques from
representation theory of finite dimensional algebras. Indeed, we may expect that various properties of the usual Hecke algebras also
hold for the cyclotomic quiver Hecke algebras, and we may use the new proof techniques to prove them.
The property we consider in our series of papers is the representation type. For the classical Hecke algebras associated with the symmetric group,
there is a theorem by Erdmann and Nakano, so that we study its analogue in other affine Lie types, namely Erdmann-Nakano type theorems for
finite quiver Hecke algebras $\fqH(\beta)$. In the first paper \cite{AP12} of our series, we gave a general framework to determine the representation type,
which gave a new proof to the original Erdmann-Nakano theorem, and we showed that the representation type in affine
type $A^{(2)}_{2\ell}$ was also described by Lie theoretic terms.

In this paper, we study affine Lie type $D^{(2)}_{\ell+1}$. We follow the general strategy established in \cite{AP12}.
Recall that a key result in \cite{AP12} was explicit description of irreducible $\fqH(\delta-\alpha_i)$-modules.
They were used to show that $\fqH(\delta)$ is a Brauer tree algebra. The modules were
homogeneous modules. In type $D^{(2)}_{\ell+1}$, a key achievement is
our success in explicit description of irreducible $\fqH(2\delta-\alpha_i)$-modules. The modules are no more homogeneous and their construction required
more insight than $A^{(1)}_\ell$ and $A^{(2)}_{2\ell}$. Then we use the explicit description to show that
$\fqH(2\delta)$ is a symmetric special biserial algebra. 
We may determine the shape of the stable Auslander-Reiten quiver of finite quiver Hecke algebras of tame representation type.
Other parts of the arguments in \cite{AP12}
may be carried out for type $D^{(2)}_{\ell+1}$ with slight modifications.

The paper is organized as follows. \S1 and \S2 are for preliminaries. In \S3 we prove explicit dimension formulas for various
spherical subalgebras of $\fqH(n)$. The formulas are crucial for proving the main theorem, and they are repeatedly used in the remaining half
of the paper. In \S4 we construct various modules, and then we prove our main theorem in \S5.

We are grateful to Professor Skowro\'nski for his advice on special biserial algebras. Using his results
\cite{ES92} with Professor Erdmann in particular, we were able to determine the shape of the stable Auslander-Reiten quiver
of tame finite quiver Hecke algebras mentioned above.

\vskip 1em

\section{Preliminaries}

In this section, we briefly recall the combinatorial realization of the highest weight
$U_q(D_{\ell+1}^{(2)})$-crystal $B(\Lambda_0)$ using {\it Young walls} and the Fock space
of neutral
fermions in aspect of $U(D_{\ell+1}^{(2)})$-modules. We will use them to describe the categorification theory for
cyclotomic quiver Hecke algebras of type $D^{(2)}_{\ell+1}$.

\subsection{Cartan datum} \label{Sec: Cartan datum}

Let $I = \{0,1, \ldots, \ell \}$ be an index set, and let $\cmA$ be the {\it affine Cartan matrix} of type $D_{\ell+1}^{(2)}$ ($\ell \ge 2$)
$$ \cmA = (a_{ij})_{i,j\in I} = \left(
                                  \begin{array}{ccccccc}
                                    2  & -2 & 0  & \ldots & 0  & 0  & 0 \\
                                    -1 &  2 & -1 & \ldots & 0  & 0  & 0 \\
                                    0  & -1 & 2  & \ldots & 0  & 0  & 0 \\
                                    \vdots   &  \vdots  &  \vdots  & \ddots &  \vdots  &  \vdots  & \vdots \\
                                    0  & 0  & 0  & \ldots & 2  & -1 & 0 \\
                                    0  & 0  & 0  & \ldots & -1 & 2  & -1 \\
                                    0  & 0  & 0  & \ldots & 0  & -2 & 2 \\
                                  \end{array}
                                \right).
  $$
When $\ell=1$, the affine Cartan matrix of type $D_{2}^{(2)}$ is defined to be that of type $A_{1}^{(1)}$, namely
$$
\cmA = (a_{ij})_{i,j\in I} = \begin{pmatrix} 2  & -2 \\ -2 & 2 \end{pmatrix}.
$$
An {\it affine Cartan datum} $(\cmA, \wlP, \Pi, \Pi^{\vee})$ is given as
\begin{itemize}
\item[(1)] $\cmA$ is the affine Cartan matrix as above,
\item[(2)] $\wlP$ is a free abelian group of rank $\ell+2$, called the {\it weight lattice},
\item[(3)] $\Pi = \{ \alpha_i \mid i\in I \} \subset \wlP$, called the set of {\it simple roots},
\item[(4)] $\Pi^{\vee} = \{ h_i \mid i\in I\} \subset \wlP^{\vee} := \Hom( \wlP, \Z )$, called the set of {\it simple coroots},
\end{itemize}
which satisfy the following properties:
\begin{itemize}
\item[(a)] $\langle h_i, \alpha_j \rangle  = a_{ij}$ for all $i,j\in I$,
\item[(b)] $\Pi$ and $\Pi^{\vee}$ are linearly independent sets.
\end{itemize}

As in \cite[p.21]{HK02}, we fix a {\it scaling element} $d$ which obeys the condition $\langle d, \alpha_i\rangle=\delta_{i0}$,
and assume that $\Pi^{\vee}$ and $d$ form a $\Z$-basis of $P^{\vee}$. Then, the {\it fundamental weight} $\Lambda_0$ is defined by
$$\langle h_i, \Lambda_0\rangle=\delta_{i0},\quad \langle d,\Lambda_0\rangle=0.$$

We define another element $\mathsf{d} \in \mathsf{P}^{\vee}\otimes_\Z \Q$ by
$$\mathsf{d}=\sum_{i=1}^{\ell-1} ih_i+\frac{\ell}{2}h_\ell+2d.$$
Then, it satisfies
\begin{align} \label{Eq: elt d}
\langle \mathsf{d}, \Lambda_0 \rangle = 0, \quad \langle \mathsf{d}, \alpha_i \rangle = \left\{
                                                                                          \begin{array}{ll}
                                                                                            1 & \hbox{ if } i=0, \ell, \\
                                                                                            0 & \hbox{ otherwise.}
                                                                                          \end{array}
                                                                                        \right..
\end{align}

The free abelian group $\rlQ = \bigoplus_{i \in I} \Z \alpha_i$ is called the {\it root lattice},  and $\rlQ^+ = \sum_{i\in I} \Z_{\ge 0} \alpha_i$ is
the {\it positive cone} of the root lattice. For $\beta=\sum_{i \in I} k_i \alpha_i \in \rlQ^+$, set $|\beta|=\sum_{i \in I} k_i$ to be the {\it height} of $\beta$.
Let $\weyl$ be the {\it Weyl group} associated with $\cmA$, which is generated by $\{r_i\}_{i\in I}$ acting on $P$ by
$r_i\Lambda=\Lambda-\langle h_i, \Lambda\rangle\alpha_i$, for $\Lambda\in P$. The \emph{null root} of type $D^{(2)}_{\ell+1}$ is
$$ \delta = \alpha_0 + \alpha_1 + \cdots + \alpha_{\ell-1} + \alpha_\ell. $$
Note that $ \langle h_i, \delta\rangle = 0$ and $w \delta = \delta$, for $i\in I$ and $w\in \weyl$.
The standard symmetric bilinear pairing $(\ | \ )$ on $\wlP$ in type $D^{(2)}_{\ell+1}$ is
$$ ( \alpha_i | \Lambda ) = d_i \langle h_i , \Lambda \rangle \ \text{ for all } \Lambda \in \wlP,$$
where $(d_0,d_1, \ldots, d_\ell) = (1,2, \ldots, 2,1)$.

\subsection{Young walls} \label{Sec: Young walls}
In this subsection, we review a combinatorics of {\it Young walls} for type $D_{\ell+1}^{(2)}$.
A Young wall is a generalization of a colored Young diagram, which gives a combinatorial realization of crystals for basic representations of various quantum affine algebras \cite{HK02, Kang03,Kash91, Kash93}.

A Young wall of type $D_{\ell+1}^{(2)}$ is a wall consisting of the colored blocks below
\begin{align*}
\xy
(0,5.5)*{};(6,5.5)*{} **\dir{-};
(0,2.5)*{};(6,2.5)*{} **\dir{-};
(0,5.5)*{};(0,2.5)*{} **\dir{-};
(6,5.5)*{};(6,2.5)*{} **\dir{-};
(3.2,4.0)*{ _0};
(0,1.5)*{};(6,1.5)*{} **\dir{-};
(0,-1.5)*{};(6,-1.5)*{} **\dir{-};
(0,-1.5)*{};(0,1.5)*{} **\dir{-};
(6,-1.5)*{};(6,1.5)*{} **\dir{-};
(3.2,0)*{ _{\ell}};
(50,2)*{ : \text{ unit width and half-unit height, unit thickness,}};
(0,-4)*{};(6,-4)*{} **\dir{-};
(0,-10)*{};(6,-10)*{} **\dir{-};
(0,-4)*{};(0,-10)*{} **\dir{-};
(6,-4)*{};(6,-10)*{} **\dir{-};
(3.2,-7)*{ _i};
(60,-7.5)*{ (i=1,\ldots, \ell-1): \text{ unit width and unit height, unit thickness,}};
\endxy
\end{align*}
built by the following rules:
\begin{enumerate}
\item Blocks should be built in the pattern given below.
\item There should be no free space to the right of any block except the rightmost column.
\end{enumerate}
The pattern is given as follows:
$$
\xy
(0,-0.5)*{};(33,-0.5)*{} **\dir{.};
(0,-1)*{};(33,-1)*{} **\dir{.};
(0,-1.5)*{};(33,-1.5)*{} **\dir{.};
(0,-2)*{};(33,-2)*{} **\dir{.};
(0,-2.5)*{};(33,-2.5)*{} **\dir{.};
(0,-3)*{};(33,-3)*{} **\dir{-};
(0,0)*{};(33,0)*{} **\dir{-};
(0,3)*{};(33,3)*{} **\dir{-};
(0,9)*{};(33,9)*{} **\dir{-};
(0,19)*{};(33,19)*{} **\dir{-};
(0,25)*{};(33,25)*{} **\dir{-};
(0,28)*{};(33,28)*{} **\dir{-};
(0,31)*{};(33,31)*{} **\dir{-};
(0,37)*{};(33,37)*{} **\dir{-};
(0,47)*{};(33,47)*{} **\dir{-};
(0,53)*{};(33,53)*{} **\dir{-};
(0,56)*{};(33,56)*{} **\dir{-};
(0,59)*{};(33,59)*{} **\dir{-};
(0,65)*{};(33,65)*{} **\dir{-};
(33,-3)*{};(33,66)*{} **\dir{-};
(27,-3)*{};(27,66)*{} **\dir{-};
(21,-3)*{};(21,66)*{} **\dir{-};
(15,-3)*{};(15,66)*{} **\dir{-};
(9,-3)*{};(9,66)*{} **\dir{-};
(3,-3)*{};(3,66)*{} **\dir{-};
(30.2,1.5)*{_0}; (30.2,6)*{_1}; (30.2,15)*{\vdots}; (30.2,22)*{_{\ell-1}}; (30.2,26.5)*{_{\ell}}; (30.2,29.5)*{_{\ell}}; (30.2,34)*{_{\ell-1}}; (30.2,43)*{\vdots};
(30.2,50)*{_1}; (30.2,54.5)*{_0}; (30.2,57.5)*{_0}; (30.2,62)*{_1};
(24.2,1.5)*{_0}; (24.2,6)*{_1}; (24.2,15)*{\vdots}; (24.2,22)*{_{\ell-1}}; (24.2,26.5)*{_{\ell}}; (24.2,29.5)*{_{\ell}}; (24.2,34)*{_{\ell-1}}; (24.2,43)*{\vdots};
(24.2,50)*{_1}; (24.2,54.5)*{_0}; (24.2,57.5)*{_0}; (24.2,62)*{_1};
(18.2,1.5)*{_0}; (18.2,6)*{_1}; (18.2,15)*{\vdots}; (18.2,22)*{_{\ell-1}}; (18.2,26.5)*{_{\ell}}; (18.2,29.5)*{_{\ell}}; (18.2,34)*{_{\ell-1}}; (18.2,43)*{\vdots};
(18.2,50)*{_1}; (18.2,54.5)*{_0}; (18.2,57.5)*{_0}; (18.2,62)*{_1};
(12.2,1.5)*{_0}; (12.2,6)*{_1}; (12.2,15)*{\vdots}; (12.2,22)*{_{\ell-1}}; (12.2,26.5)*{_{\ell}}; (12.2,29.5)*{_{\ell}}; (12.2,34)*{_{\ell-1}}; (12.2,43)*{\vdots};
(12.2,50)*{_1}; (12.2,54.5)*{_0}; (12.2,57.5)*{_0}; (12.2,62)*{_1};
(6.2,1.5)*{_0}; (6.2,6)*{_1}; (6.2,15)*{\vdots}; (6.2,22)*{_{\ell-1}}; (6.2,26.5)*{_{\ell}}; (6.2,29.5)*{_{\ell}}; (6.2,34)*{_{\ell-1}}; (6.2,43)*{\vdots};
(6.2,50)*{_1}; (6.2,54.5)*{_0}; (6.2,57.5)*{_0}; (6.2,62)*{_1};
\endxy
$$
Note that the sequence $(0,1,2, \ldots, \ell-1, \ell, \ell, \ell-1, \ldots, 2,1,0)$ is repeated in each column.

A {\it full column} is a column whose height is a multiple of the unit length, and a {\it $\delta$-column} is a column consisting of two $0$-blocks, two $1$-blocks, $\ldots$, and two $\ell$-blocks.
The definition of the $\delta$-column might confuse the reader, but recall that $\delta$-column in type $D^{(2)}_{\ell+1}$ is a column of weight $2\delta$. See \cite[p.279]{HK02}.
A Young wall is called {\it proper} if none of full columns have the same height. A column in a proper Young wall is said to have a {\it removable} $\delta$ if we may remove a $\delta$-column from
the given Young wall and still have a proper Young wall.

Let $\mathcal{Y}(\Lambda_0)$ be the set of all proper Young walls whose columns have no removable $\delta$. Kashiwara operators $\tilde{e}_i$ and $\tilde{f}_i$ on $\mathcal{Y}(\Lambda_0)$ can be defined by considering combinatorics of Young walls,
which give a crystal structure of a quantum affine algebra $U_q(\g)$ of type $D_{\ell+1}^{(2)}$ \cite{HK02, Kang03}.
\begin{thm}[\protect{\cite[Thm.7.1]{Kang03}}]
The crystal $\mathcal{Y}(\Lambda_0)$ is isomorphic to the crystal $B(\Lambda_0)$ of the highest weight $U_q(\g)$-module $V_q(\Lambda_0)$.
\end{thm}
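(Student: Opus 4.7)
The plan is to endow $\mathcal{Y}(\Lambda_0)$ with an abstract $U_q(\g)$-crystal structure and then identify the connected component containing the empty Young wall with $B(\Lambda_0)$. Two ingredients are required: an explicit combinatorial description of the Kashiwara operators on proper Young walls without removable $\delta$, and a comparison with a known realisation of $B(\Lambda_0)$, most conveniently the path realisation via a level-one perfect crystal of type $D^{(2)}_{\ell+1}$.

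The first step is to define $\tilde e_i$ and $\tilde f_i$ on $Y \in \mathcal{Y}(\Lambda_0)$ via the signature rule: label each addable $i$-block of $Y$ by $+$ and each removable $i$-block by $-$, and read them off column by column from right to left to form the $i$-signature; after cancelling adjacent $(-+)$ pairs one obtains a reduced signature of the form $+\cdots+\,-\cdots-$. Then $\tilde f_i Y$ adjoins an $i$-block at the position of the leftmost remaining $+$ and $\tilde e_i Y$ removes an $i$-block at the rightmost remaining $-$, with the convention that both are $0$ if no such block exists. The weight is $\wt(Y) = \Lambda_0 - \sum_i m_i(Y)\alpha_i$, where $m_i(Y)$ counts the $i$-blocks of $Y$, and $\varepsilon_i(Y), \varphi_i(Y)$ are the numbers of $-$'s and $+$'s in the reduced signature. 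A case analysis distinguishing $i \in \{0,\ell\}$ (the half-height colours) from $1 \le i \le \ell-1$ is required to verify that these operators preserve both properness and the absence of removable $\delta$-columns.

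Once $\mathcal{Y}(\Lambda_0)$ carries an abstract crystal structure, the empty wall $Y_\emptyset$ is the unique element of weight $\Lambda_0$ annihilated by every $\tilde e_i$. To identify $\mathcal{Y}(\Lambda_0)$ with $B(\Lambda_0)$, I would compare with the path realisation. Writing $B_1$ for a level-one perfect crystal of type $D^{(2)}_{\ell+1}$ and $(\ldots \otimes b_2 \otimes b_1 \otimes b_0)$ for the ground state path, the connected component of the ground state inside $\cdots \otimes B_1 \otimes B_1 \otimes B_1$ is isomorphic to $B(\Lambda_0)$. Reading off the columns of a Young wall from left to right as elements of $B_1$ yields a candidate map $\Psi \colon \mathcal{Y}(\Lambda_0) \to B(\Lambda_0)$; one then checks that $\Psi$ intertwines the combinatorial operators with the tensor-product Kashiwara operators, and that the no-removable-$\delta$ condition precisely selects those paths lying in the ground-state component. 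Surjectivity follows either from connectedness plus weight considerations or, more robustly, from matching graded characters against the Weyl--Kac character of $V_q(\Lambda_0)$.

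The main obstacle is the no-removable-$\delta$ condition. It reflects a non-trivial identification inside the perfect-crystal tensor product: two consecutive columns of weight $\delta$ are equivalent to having no such columns at all in the corresponding ground-state class. Verifying that the combinatorial Kashiwara operators preserve this condition, and that the bijection $\Psi$ is well defined and surjective, is the technical heart of the argument, and is the reason for the seemingly ad hoc exclusion of removable $\delta$-columns in the definition of $\mathcal{Y}(\Lambda_0)$.
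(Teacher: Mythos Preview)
The paper does not prove this statement at all: it is quoted verbatim as \cite[Thm.7.1]{Kang03} and used as a black box. There is therefore no ``paper's own proof'' to compare against; the authors simply import Kang's result and immediately proceed to Lemma~\ref{extremal weights}.

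That said, your sketch is a faithful outline of the strategy actually used in the cited reference. Kang's proof does proceed by equipping the set of proper Young walls without removable $\delta$ with an abstract crystal structure via the signature rule, and then establishing a crystal isomorphism with the path realisation $B(\Lambda_0)\hookrightarrow \cdots\otimes B_1\otimes B_1$ built from a level-one perfect crystal. Your identification of the technical crux --- that the no-removable-$\delta$ condition corresponds exactly to lying in the ground-state component of the semi-infinite tensor product --- is correct, and the case analysis for $i\in\{0,\ell\}$ versus $1\le i\le \ell-1$ reflecting the half-height blocks is genuinely where the work lies. One small caveat: the column-reading map $\Psi$ goes from right to left (matching the tensor-product convention for crystals), not left to right as you wrote; and surjectivity in Kang's argument is obtained from connectedness of $B(\Lambda_0)$ together with the fact that $\Psi$ sends the empty wall to the ground-state path, rather than from a character comparison.
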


\begin{lemma}
\label{extremal weights}
$\Lambda_0-2\delta+\alpha_i$, for $0\le i\le \ell-1$, are extremal weights.
\end{lemma}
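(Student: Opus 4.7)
The plan is to prove the lemma by exhibiting, for each $i$ with $0 \le i \le \ell-1$, an explicit element $w_i \in \weyl$ such that $w_i\Lambda_0 = \Lambda_0 - 2\delta + \alpha_i$; since the extremal weights of $B(\Lambda_0)$ are exactly the elements of the Weyl orbit $\weyl\cdot\Lambda_0$, this will suffice. The candidate I propose is
\[
w_i := (r_{i-1} r_{i-2} \cdots r_1 r_0)\,(r_{i+1} r_{i+2} \cdots r_\ell)\,(r_{\ell-1} r_{\ell-2} \cdots r_1 r_0),
\]
where the first parenthesized factor is read as the identity when $i = 0$.

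The verification splits into three stages, each a short induction using the reflection formula $r_j\mu = \mu - \langle h_j,\mu\rangle\alpha_j$ together with the Cartan entries of Section~\ref{Sec: Cartan datum}. First I would show, by induction on $k$ using $a_{k,k-1} = -1$ for $1 \le k \le \ell-1$ and $\langle h_0,\Lambda_0\rangle = 1$, that the suffix factor $r_{\ell-1}\cdots r_0$ sends $\Lambda_0$ to $\Lambda_0 - (\alpha_0 + \alpha_1 + \cdots + \alpha_{\ell-1})$. Second, applying $r_\ell$ to this weight produces the crucial single $-2\alpha_\ell$ jump: the doubled entry $a_{\ell,\ell-1} = -2$ forces $\langle h_\ell,\cdot\rangle = 2$, and the remaining middle reflections $r_{\ell-1}, \ldots, r_{i+1}$ each contribute $-\alpha_k$ since $-a_{k,k+1} = 1$, leaving $\Lambda_0 - \delta - (\alpha_{i+1} + \cdots + \alpha_\ell)$. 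Third, applying the prefix $r_{i-1}\cdots r_0$ works cleanly because the index sets $\{0, 1, \ldots, i-1\}$ and $\{i+1, \ldots, \ell\}$ are separated in the Dynkin diagram by the missing node $i$, so each inner product $\langle h_k,\cdot\rangle$ for $k = 0, 1, \ldots, i-1$ at the intermediate weight evaluates to $1$ and the prefix subtracts $\alpha_0 + \alpha_1 + \cdots + \alpha_{i-1}$. Assembling yields
\[
\Lambda_0 - \delta - (\alpha_0 + \cdots + \alpha_{i-1}) - (\alpha_{i+1} + \cdots + \alpha_\ell) = \Lambda_0 - \delta - (\delta - \alpha_i) = \Lambda_0 - 2\delta + \alpha_i,
\]
as required.

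The main obstacle is the careful bookkeeping of the inner products $\langle h_j,\mu\rangle$ at each reflection step, with special attention to the asymmetric Cartan entries at the ends of the diagram ($a_{0,1} = -2$ vs.\ $a_{1,0} = -1$, and $a_{\ell,\ell-1} = -2$ vs.\ $a_{\ell-1,\ell} = -1$); the $-2\delta$ contribution in the target is produced by exactly one reflection, namely the middle $r_\ell$. The restriction $i \le \ell-1$ cannot be relaxed, as a short calculation with the bilinear form of Section~\ref{Sec: Cartan datum} gives $(\Lambda_0 - 2\delta + \alpha_\ell \mid \Lambda_0 - 2\delta + \alpha_\ell) = -2 \neq 0 = (\Lambda_0 \mid \Lambda_0)$, so $\Lambda_0 - 2\delta + \alpha_\ell$ has the wrong norm to be $\weyl$-conjugate to $\Lambda_0$ and hence is not an extremal weight.
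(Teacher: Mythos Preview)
Your proof is correct and follows essentially the same approach as the paper: exhibit an explicit Weyl group element carrying $\Lambda_0$ to $\Lambda_0-2\delta+\alpha_i$. The paper organises the computation slightly differently---it first verifies $i=0$ via $r_1r_2\cdots r_\ell\cdots r_1r_0\Lambda_0$ (which is exactly your $w_0$) and $i=1$ separately, and then bootstraps with the recurrence $r_ir_{i+1}(\Lambda_0-2\delta+\alpha_i)=\Lambda_0-2\delta+\alpha_{i+1}$ for $1\le i\le\ell-2$---whereas you give a single closed-form $w_i$ valid for all $i$; but the underlying calculation is the same explicit reflection chase through the Cartan entries, and your extra remark on why $i=\ell$ fails is a nice addition not present in the paper.
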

\begin{proof}
Explicit computation shows
\begin{align*}
r_1r_2\cdots r_\ell\cdots r_1r_0\Lambda_0&=\Lambda_0-2\delta+\alpha_0,\\
r_2\cdots r_\ell\cdots r_2r_0r_1r_0\Lambda_0&=\Lambda_0-2\delta+\alpha_1.
\end{align*}
Then $r_ir_{i+1}(\Lambda_0-2\delta+\alpha_i)=\Lambda_0-2\delta+\alpha_{i+1}$, for $1\le i\le \ell-2$, proves the result.
\end{proof}

Using the Young wall realization, one can observe the following facts:
\begin{enumerate}
\item[(i)] For each $i\in I$, $\mathcal{Y}(\Lambda_0)_{\Lambda_0-2\delta + \alpha_i}$ consists of the element $Y_i$ given below.
\begin{align} \label{Eq: Young walls Yi}
\xy
(0,-30)*{};(6,-30)*{} **\dir{-};
(0,-27)*{};(6,-27)*{} **\dir{-};
(0,-21)*{};(6,-21)*{} **\dir{-};
(0,-7)*{};(6,-7)*{} **\dir{-};
(0,-1)*{};(6,-1)*{} **\dir{-};
(0,2)*{};(6,2)*{} **\dir{-};
(0,5)*{};(6,5)*{} **\dir{-};
(0,11)*{};(6,11)*{} **\dir{-};
(0,27)*{};(6,27)*{} **\dir{-};
(0,33)*{};(6,33)*{} **\dir{-};
(0,-30)*{};(0,33)*{} **\dir{-};
(6,-30)*{};(6,33)*{} **\dir{-};
(-5,-5)*{Y_0 =\ \ };
(3.3,-28.5)*{_0 }; (3.3,-24)*{_1 }; (3.3,-13)*{\vdots }; (3.3,-4)*{_{\ell-1} };(3.3,0.5)*{_{\ell} }; (3.3,3.5)*{_{\ell} }; (3.3,8)*{_{\ell-1} };
(3.3,20)*{\vdots }; (3.3,30)*{_1 };
(30,-30)*{};(42,-30)*{} **\dir{-};
(30,-27)*{};(42,-27)*{} **\dir{-};
(30,-17)*{};(42,-17)*{} **\dir{-};
(30,-11)*{};(42,-11)*{} **\dir{-};
(36,-1)*{};(42,-1)*{} **\dir{-};
(36,2)*{};(42,2)*{} **\dir{-};
(36,5)*{};(42,5)*{} **\dir{-};
(36,15)*{};(42,15)*{} **\dir{-};
(36,21)*{};(42,21)*{} **\dir{-};
(30,-30)*{};(30,-11)*{} **\dir{-};
(36,-30)*{};(36,21)*{} **\dir{-};
(42,-30)*{};(42,21)*{} **\dir{-};
(25,-5)*{Y_i =\ \ }; (62,-5)*{(i=1,\ldots,\ell-1) };
(33.3,-28.5)*{_0 }; (39.3,-28.5)*{_0 };
(33.3,-21)*{\vdots }; (39.3,-21)*{ \vdots };
(33.3,-14)*{_{i-1} }; (39.3,-14)*{ _{i-1} };
(39.3,-5)*{ \vdots };
(39.3,0.5)*{ _{\ell} };
(39.3,3.5)*{ _{\ell} };
(39.3,11)*{ \vdots };
(39.3,18)*{ _{i+1} };
(100,-30)*{};(112,-30)*{} **\dir{-};
(100,-27)*{};(112,-27)*{} **\dir{-};
(100,-21)*{};(112,-21)*{} **\dir{-};
(100,-7)*{};(112,-7)*{} **\dir{-};
(100,-1)*{};(112,-1)*{} **\dir{-};
(106,2)*{};(112,2)*{} **\dir{-};
(100,-30)*{};(100,-1)*{} **\dir{-};
(106,-30)*{};(106,2)*{} **\dir{-};
(112,-30)*{};(112,2)*{} **\dir{-};
(95,-5)*{Y_\ell =\ \ };
(103.3,-28.5)*{ _{0} };(109.3,-28.5)*{ _{0} };
(103.3,-24)*{ _{1} };(109.3,-24)*{ _{1} };
(103.3,-13)*{ \vdots };(109.3,-13)*{ \vdots };
(103.3,-4)*{ _{\ell-1} };(109.3,-4)*{ _{\ell-1} };
(109.3,0.5)*{ _{\ell} };
\endxy
\end{align}
\item[(ii)] The Young walls $Y_0, \ldots Y_{\ell-1}$ are extremal.
\item[(iii)] We have $\varepsilon_\ell (Y_{\ell-1})= 2$ and if $i,j \in I$ are such that $(i,j) \ne (\ell-1,\ell)$ then
\begin{align} \label{Eq: properties of Yi}
\varepsilon_j(Y_i) = \left\{
                                                                                        \begin{array}{ll}
                                                                                          1 & \hbox{ if } j=i-1,i+1, \\
                                                                                          0 & \hbox{ otherwise}.
                                                                                        \end{array}
                                                                                      \right.
\end{align}
Moreover, in the case that $j$ is either $i-1$ or $i+1$, we have $\tilde{e}_i(Y_j) = \tilde{e}_j(Y_i)$.
\item[(iv)] We have
\begin{align} \label{Eq: num of irr R(2delta)-modules}
| \ \mathcal{Y}(\Lambda_0)_{\Lambda_0-2\delta}\ | = \ell+1.
\end{align}
\end{enumerate}

\subsection{Shifted Young diagrams}
Let $\lambda= (\lambda_1 > \lambda_2 > \ldots> \lambda_l > 0)$ be a shifted Young diagram of depth $l$. We denote the depth $l$ by $l(\lambda)$ and write $\lambda \vdash n$ if
$\lambda$ consists of $n$ boxes.
A {\it tableau} $T$ of shape $\lambda \vdash n$ is a filling of
boxes of $\lambda$ with numbers $1,\dots, n$, one for each box. For a simple transposition $s_k$,
let $s_kT$ be the tableau obtained from $T$ by exchanging the entries $k$ and $k+1$. A {\it standard tableau} is a tableau whose entries in rows and columns increase from left to right and top to bottom, respectively.
The {\it canonical tableau} $T^\lambda$ is the standard tableau whose $(i,j)$-entry is $(j-i) + 1 + \sum_{k=1}^{i-1} \lambda_k$.
We denote by $\ST(\lambda)$ the set of all standard tableaux of shape $\lambda$.
For example, the following are standard tableaux of shape $(5,2)$:
\vskip 0.3em
$$
\xy
(0,12)*{};(30,12)*{} **\dir{-};
(0,6)*{};(30,6)*{} **\dir{-};
(6,0)*{};(18,0)*{} **\dir{-};
(0,6)*{};(0,12)*{} **\dir{-};
(6,0)*{};(6,12)*{} **\dir{-};
(12,0)*{};(12,12)*{} **\dir{-};
(18,0)*{};(18,12)*{} **\dir{-};
(24,6)*{};(24,12)*{} **\dir{-};
(30,6)*{};(30,12)*{} **\dir{-};
(3,9)*{1}; (9,9)*{2}; (15,9)*{4}; (21,9)*{6}; (27,9)*{7};
(9,3)*{3}; (15,3)*{5};
(50,12)*{};(80,12)*{} **\dir{-};
(50,6)*{};(80,6)*{} **\dir{-};
(56,0)*{};(68,0)*{} **\dir{-};
(50,6)*{};(50,12)*{} **\dir{-};
(56,0)*{};(56,12)*{} **\dir{-};
(62,0)*{};(62,12)*{} **\dir{-};
(68,0)*{};(68,12)*{} **\dir{-};
(74,6)*{};(74,12)*{} **\dir{-};
(80,6)*{};(80,12)*{} **\dir{-};
(53,9)*{1}; (59,9)*{2}; (65,9)*{3}; (71,9)*{5}; (77,9)*{7};
(59,3)*{4}; (65,3)*{6};
\endxy
$$

Let $\lambda$ be a shifted Young diagram. We consider the residue pattern ($\ell\ge 1$)
\begin{equation} \label{Eq: residue pattern}
0\ 1\ 2\ \dots \ \ell-1\ \ell\ \ell \ \ell-1\ \dots\ 2\ 1\ 0,
\end{equation}
which repeats from left to right in each row of $\lambda$, and denote by $\res(i,j)$ the residue of the $(i,j)$-box in $\lambda$.
For example, if $\ell=3$ and $\lambda = (12,7,3,2)$, the residues are given as follows:
$$
\xy
(0,12)*{};(72,12)*{} **\dir{-};
(0,6)*{};(72,6)*{} **\dir{-};
(6,0)*{};(48,0)*{} **\dir{-};
(12,-6)*{};(30,-6)*{} **\dir{-};
(18,-12)*{};(30,-12)*{} **\dir{-};
(0,12)*{};(0,6)*{} **\dir{-};
(6,12)*{};(6,0)*{} **\dir{-};
(12,12)*{};(12,-6)*{} **\dir{-};
(18,12)*{};(18,-12)*{} **\dir{-};
(24,12)*{};(24,-12)*{} **\dir{-};
(30,12)*{};(30,-12)*{} **\dir{-};
(36,12)*{};(36,0)*{} **\dir{-};
(42,12)*{};(42,0)*{} **\dir{-};
(48,12)*{};(48,0)*{} **\dir{-};
(54,12)*{};(54,6)*{} **\dir{-};
(60,12)*{};(60,6)*{} **\dir{-};
(66,12)*{};(66,6)*{} **\dir{-};
(72,12)*{};(72,6)*{} **\dir{-};
(3,9)*{0}; (9,9)*{1}; (15,9)*{2}; (21,9)*{3}; (27,9)*{3}; (33,9)*{2}; (39,9)*{1};(45,9)*{0};(51,9)*{0}; (57,9)*{1}; (63,9)*{2}; (69,9)*{3};
           (9,3)*{0}; (15,3)*{1}; (21,3)*{2}; (27,3)*{3}; (33,3)*{3}; (39,3)*{2}; (45,3)*{1};
                      (15,-3)*{0}; (21,-3)*{1}; (27,-3)*{2};
                                   (21,-9)*{0}; (27,-9)*{1};
\endxy
$$
In this example, we have $\res(2,7) = 2$. Note that the residue pattern also appears in columns of Young walls as colors (see Section \ref{Sec: Young walls}).

\begin{defn} \label{Def: residue sequence}
For $T\in \ST(\lambda)$, we define the {\it residue sequence} of $T$ by
$$ \res(T) = (\res_1(T), \res_2(T), \ldots, \res_n(T) )\in I^n,$$
where $\res_k(T)$ is the residue of the box filled with $k$ in $T$, for $1\leq k\leq n$.
\end{defn}

\subsection{The Fock space of neutral fermions} \label{Sec: Fock space}

Let $\mathsf{C}$ be the Clifford algebra over $\C$ generated by $\phi_k\ (k\in \Z)$ with the following defining relations:
$$ \phi_p \phi_q + \phi_q \phi_p = \left\{
                                     \begin{array}{ll}
                                       2 & \hbox{ if } p=q=0, \\
                                       (-1)^p \delta_{p,-q} & \hbox{ otherwise.}
                                     \end{array}
                                   \right.
  $$
Consider the quotient space $\mathsf{F} = \mathsf{C} / \mathsf{I}$ by the left ideal $\mathsf{I}$ of $\mathsf{C}$ generated by $\{ \phi_k \mid k<0 \}$.
For a shifted Young diagram $\lambda$, we can write it as $\lambda=(\lambda_1 > \lambda_2 > \ldots > \lambda_{2r-1} > \lambda_{2r} \ge 0 )$ for a unique number $r$,
where $\lambda_{2r}>0$ if $l(\lambda)$ is even and $\lambda_{2r}=0$ if $l(\lambda)$ is odd. Set $|0\rangle = 1 + \mathsf{I}$ and
$$|\lambda \rangle = \phi_{\lambda_1}\phi_{\lambda_2} \cdots \phi_{\lambda_{2r}}|0\rangle. $$
Note that $\{ |\lambda \rangle \mid \lambda : \text{shifted Young diagrams} \}$ is a linearly independent set in $\mathsf{F}$.

We now define
$$\mathcal{F} = \Span_\C\{ |\lambda \rangle \mid \lambda : \text{shifted Young diagrams} \}\subseteq \mathsf{F},$$
and describe the actions of the Chevalley generators $f_i, e_i\ (i\in I)$ on $\mathcal{F}$ for the Kac-Moody algebra $\g$ associated with $\mathsf{A}$
by the reduction $(BKP)_{2\ell+2}$ in \cite{DJKM82, JM83} as follows.
\begin{align} \label{Eq: affine YD operations}
f_i = \sum_{j\ge0, \ j\equiv i, -i-1} \mathsf{f}_j,  \qquad
e_i = \left\{
        \begin{array}{ll}
          \mathsf{e}_0 + 2 \sum_{j>0, \  j\equiv 0, -1} \mathsf{e}_j & \hbox{ if } i=0, \\
          \sum_{j>0, \ j\equiv i, -i-1} \mathsf{e}_j & \hbox{ if } i = 1, \ldots, \ell-1, \\
          2\sum_{j>0, \ j\equiv \ell, \ell+1} \mathsf{e}_j & \hbox{ if } i = \ell,
        \end{array}
      \right.
\end{align}
where all congruences are taken modulo $h:= 2\ell+2$, and
\begin{align*}
\mathsf{e}_0 &=  \phi_{-1} \phi_{0}, \qquad \quad \ \  \mathsf{e}_j = (-1)^j \phi_{-j-1} \phi_{j} \qquad  \qquad  \qquad \qquad  (j\ge 1), \\
\mathsf{f}_0 &= \phi_{1} \phi_{0}, \qquad  \quad\ \ \ \ \mathsf{f}_j = (-1)^j \phi_{j+1} \phi_{-j} \qquad \qquad  \qquad \qquad  (j\ge 1).
\end{align*}
$\mathcal{F}$ has the highest weight $\Lambda_0$.
It is straightforward to check that if $\lambda$ has a row of length $j+1$ and does not have a row of length $j$ (resp. $\lambda$ has a row of length $j$ and does not have
a row of length $j+1$), then we have
\begin{align} \label{Eq: finite YD operations}
\mathsf{e}_j |\lambda\rangle = |\mathsf{e}_j\lambda\rangle, \quad (\text{resp.}\;\;\mathsf{f}_j |\lambda\rangle = |\mathsf{f}_j\lambda\rangle),
\end{align}
where $\mathsf{e}_j\lambda$ (resp.\ $\mathsf{f}_j\lambda$) is the shifted Young diagram obtained from $\lambda$ by removing the rightmost box of
the row of length $j+1$ (resp. adding a new box on the right of the row of length $j$).
Otherwise, we have $\mathsf{e}_j |\lambda\rangle=0$ (resp. $ \mathsf{f}_j |\lambda\rangle=0)$. For a shifted Young diagram $\lambda$,
$| \lambda \rangle$ is a weight vector whose weight is
\begin{align*}
\wt(\lambda):= \wt( | \lambda \rangle) = \Lambda_0 - \sum_{k\in \res(\lambda)} \alpha_k ,
\end{align*}
where $\res(\lambda)$ is a multiset whose elements are $\res(i,j)$ for all $(i,j)\in \lambda$.

\vskip 1em

\section{Quiver Hecke algebras}\label{Sec: quiver Hecke algs}
In this section, we introduce the quiver Hecke algebras, the main object of study. Then
we review the categorification of integrable highest weight modules and crystals using their
module categories. The results in this section will be used for proving our main theorems.
Throughout the paper, $\bR$ is an algebraically closed field, and algebras are unital associative $\bR$-algebras.

Let $(\cmA, \wlP, \Pi, \Pi^{\vee})$ be the affine Cartan datum from Section \ref{Sec: Cartan datum}, and take polynomials $\mathcal{Q}_{i,j}(u,v)\in\bR[u,v]$, for $i,j\in I$,
of the form
\begin{align*}
\mathcal{Q}_{i,j}(u,v) = \left\{
                 \begin{array}{ll}
                   \sum_{p(\alpha_i|\alpha_i)+q (\alpha_j|\alpha_j) + 2(\alpha_i|\alpha_j)=0} t_{i,j;p,q} u^pv^q & \hbox{if } i \ne j,\\
                   0 & \hbox{if } i=j,
                 \end{array}
               \right.
\end{align*}
where $t_{i,j;p,q} \in \bR$ are such that $t_{i,j;-a_{ij},0} \ne 0$ and $\mathcal{Q}_{i,j}(u,v) = \mathcal{Q}_{j,i}(v,u)$.
The symmetric group $\mathfrak{S}_n = \langle s_k \mid k=1, \ldots, n-1 \rangle$ acts on $I^n$ by place permutations.

\begin{defn} \
Let $\Lambda \in \wlP^+$. The {\it cyclotomic quiver Hecke algebra} $R^{\Lambda}(n)$ associated with polynomials $(\mathcal{Q}_{i,j}(u,v))_{i,j\in I}$ and the dominant integral weight $\Lambda$
is the $\Z$-graded  $\bR$-algebra defined by three sets of generators
$$\{e(\nu) \mid \nu = (\nu_1,\ldots, \nu_n) \in I^n\}, \;\{x_k \mid 1 \le k \le n\}, \;\{\psi_l \mid 1 \le l \le n-1\} $$
subject to the following relations:

\begin{align*}
& e(\nu) e(\nu') = \delta_{\nu,\nu'} e(\nu),\ \sum_{\nu \in I^{n}} e(\nu)=1,\
x_k e(\nu) =  e(\nu) x_k, \  x_k x_l = x_l x_k,\\
& \psi_l e(\nu) = e(s_l(\nu)) \psi_l,\  \psi_k \psi_l = \psi_l \psi_k \text{ if } |k - l| > 1, \\[5pt]
&  \psi_k^2 e(\nu) = \mathcal{Q}_{\nu_k, \nu_{k+1}}(x_k, x_{k+1}) e(\nu), \\[5pt]
&  (\psi_k x_l - x_{s_k(l)} \psi_k ) e(\nu) = \left\{
                                                           \begin{array}{ll}
                                                             -  e(\nu) & \hbox{if } l=k \text{ and } \nu_k = \nu_{k+1}, \\
                                                               e(\nu) & \hbox{if } l = k+1 \text{ and } \nu_k = \nu_{k+1},  \\
                                                             0 & \hbox{otherwise,}
                                                           \end{array}
                                                         \right. \\[5pt]
&( \psi_{k+1} \psi_{k} \psi_{k+1} - \psi_{k} \psi_{k+1} \psi_{k} )  e(\nu) \\[4pt]
&\qquad \qquad \qquad = \left\{
                                                                                   \begin{array}{ll}
\displaystyle \frac{\mathcal{Q}_{\nu_k,\nu_{k+1}}(x_k,x_{k+1}) -
\mathcal{Q}_{\nu_k,\nu_{k+1}}(x_{k+2},x_{k+1})}{x_{k}-x_{k+2}} e(\nu) & \hbox{if } \nu_k = \nu_{k+2}, \\
0 & \hbox{otherwise}, \end{array}
\right.\\[5pt]
& x_1^{\langle h_{\nu_1}, \Lambda \rangle} e(\nu)=0.
\end{align*}
\end{defn}

\bigskip
The $\Z$-grading on $R^\Lambda(n)$ is given as follows:
\begin{align*}
\deg(e(\nu))=0, \quad \deg(x_k e(\nu))= ( \alpha_{\nu_k} |\alpha_{\nu_k}), \quad  \deg(\psi_l e(\nu))= -(\alpha_{\nu_{l}} | \alpha_{\nu_{l+1}}).
\end{align*}

\vskip 1em

For $\beta\in \rlQ^+$ with $|\beta|=n$, we set $I^\beta = \{ \nu=(\nu_1, \ldots, \nu_n ) \in I^n \mid \sum_{k=1}^n\alpha_{\nu_k} = \beta \}$ and
$$ e(\beta) = \sum_{\nu \in I^\beta} e(\nu),$$
which is a central idempotent of $R^\Lambda(n)$ since $I^\beta$ is invariant under the action of $\mathfrak{S}_n$. Define
$$ R^\Lambda(\beta) = R^\Lambda(n) e(\beta). $$
We do not know whether $R^\Lambda(\beta)$ is indecomposable.
For an $R^{\Lambda}(\beta)$-module $M$, the {\it $q$-character} $\mathrm{ch}_q(M)$ is defined by
$$ \mathrm{ch}_q(M) := \sum_{\nu\in I^\beta} \dim_q( e(\nu) M ) \nu ,$$
where $ \dim_q( N ) := \sum_{i\in \Z} (\dim N_i) q^i $ for a graded vector space $N = \bigoplus_{i\in \Z}N_i$.

We will be interested in the special cyclotomic quiver Hecke algebra $\fqH(\beta)$ associated with the Cartan datum of type $D_{\ell+1}^{(2)}$,
which we call \emph{finite quiver Hecke algebras of type $D_{\ell+1}^{(2)}$}.

\begin{Rmk}
As is explained in the introduction, we focus on finite quiver Hecke algebras because they resemble the Hecke
algebras associated with the symmetric group, and we study
how far the properties which were found for the classical Hecke
algebras remain true for finite quiver Hecke algebras. For example, we
have already shown that the representation type follows Erdmann and
Nakano's pattern in types $A^{(1)}_\ell, A^{(2)}_{2\ell}$ and
$D^{(2)}_{\ell+1}$, but in the fourth paper in our series, we will show that
we may still use our method but the representation type follows different pattern in type $C^{(1)}_\ell$.
The Erdmann and Nakano's pattern also fails for the level two and higher level cyclotomic
Hecke algebras.
\end{Rmk}

Before studying the finite quiver Hecke algebras of type $D_{\ell+1}^{(2)}$, we recall results which are valid for general $R^\Lambda(\beta)$.

\begin{prop} [\protect{cf. \cite[Lem.2.2]{AP12}}]  \label{Prop: finite-dim} The algebra $R^\Lambda(\beta)$ is finite-dimensional and $x_1, \ldots, x_n$ are nilpotent.
\end{prop}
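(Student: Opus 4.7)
This is a standard fact about cyclotomic KLR algebras; my plan follows the proof of \cite[Lem.~2.2]{AP12}, which in turn goes back to arguments of Khovanov--Lauda, Brundan--Kleshchev and Kang--Kashiwara. Both assertions are proved together: once the nilpotency of every $x_k$ is known, finite-dimensionality is an immediate consequence of the non-cyclotomic PBW-type basis theorem.

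First I would record that $x_1$ is nilpotent. Because $I^\beta$ is finite, the cyclotomic relation $x_1^{\langle h_{\nu_1},\Lambda\rangle}e(\nu)=0$ combined with $\sum_{\nu\in I^\beta}e(\nu)=1$ gives $x_1^N=0$ for $N:=\max_{\nu\in I^\beta}\langle h_{\nu_1},\Lambda\rangle$. The main work is to propagate nilpotency from $x_k$ to $x_{k+1}$ by induction on $k$. For each $\nu\in I^\beta$ I would split into two cases. If $\nu_k\neq\nu_{k+1}$, the quadratic relation $\psi_k^2 e(\nu)=\mathcal{Q}_{\nu_k,\nu_{k+1}}(x_k,x_{k+1})e(\nu)$ has leading $x_k^{-a_{\nu_k\nu_{k+1}}}$-coefficient $t_{\nu_k,\nu_{k+1};-a_{\nu_k\nu_{k+1}},0}\neq 0$, so (after reading $\mathcal{Q}$ as a polynomial in the appropriate variable via $\mathcal{Q}_{i,j}(u,v)=\mathcal{Q}_{j,i}(v,u)$) the inductive nilpotency of $x_k$ compresses high powers of $x_{k+1}$ into a bounded-power subspace of $e(\nu)R^\Lambda(\beta)e(\nu)$. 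If $\nu_k=\nu_{k+1}$, the squared relation vanishes; iterating the crossing relation $(\psi_k x_{k+1}-x_k\psi_k)e(\nu)=e(\nu)$ yields
\[
\psi_k x_{k+1}^N e(\nu)=x_k^N\psi_k e(\nu)+\sum_{j=0}^{N-1}x_k^{N-1-j}x_{k+1}^j e(\nu),
\]
after which multiplying on the left by $\psi_k$ and using $\psi_k^2 e(\nu)=0$ produces a recursion among $\{\psi_k x_{k+1}^j e(\nu)\}_j$; feeding in the inductive nilpotency bound on $x_k$ traps the highest powers of $x_{k+1}$ in the span of lower ones.

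Once every $x_k$ is nilpotent, the Khovanov--Lauda basis theorem for the non-cyclotomic $R(\beta)$ provides a spanning set $\{\psi_w x_1^{a_1}\cdots x_n^{a_n}e(\nu):w\in\mathfrak{S}_n,\;a_k\ge 0,\;\nu\in I^\beta\}$ whose image spans $R^\Lambda(\beta)$; nilpotency bounds each exponent $a_k$, and $\mathfrak{S}_n$ and $I^\beta$ are finite, so $R^\Lambda(\beta)$ is finite-dimensional.

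The hard part is the case $\nu_k=\nu_{k+1}$ in the inductive step: because $\mathcal{Q}_{ii}=0$ and $\psi_k$ is not invertible, the quadratic relation is inert and the reduction of $x_{k+1}^N$ to lower powers must be extracted indirectly through the crossing relation together with $\psi_k^2 e(\nu)=0$. The bookkeeping of the lower-order correction terms accumulating under repeated use of $\psi_k x_{k+1}=x_k\psi_k+1$ is the delicate point.
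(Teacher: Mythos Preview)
The paper gives no proof of its own here; it simply records the statement with the citation to \cite[Lem.~2.2]{AP12}. So there is nothing to compare against beyond the cited argument, which is exactly the one you are reproducing, and your overall strategy (nilpotency of $x_1$ from the cyclotomic relation, induction on $k$, then the Khovanov--Lauda spanning set) is correct and standard.

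One genuine gap in your sketch is the case $\nu_k=\nu_{k+1}$. Your identity
\[
\psi_k x_{k+1}^N e(\nu)=x_k^N\psi_k e(\nu)+\sum_{j=0}^{N-1}x_k^{\,N-1-j}x_{k+1}^{\,j}\,e(\nu)
=h_{N-1}(x_k,x_{k+1})\,e(\nu)
\]
is fine, but left-multiplying by $\psi_k$ only gives $\psi_k\,h_{N-1}(x_k,x_{k+1})\,e(\nu)=0$, which is a relation about $\psi_k\cdot(\text{something})$, not a recursion that by itself controls $x_{k+1}^M e(\nu)$: a ``recursion among $\{\psi_k x_{k+1}^j e(\nu)\}$'' cannot conclude anything about $x_{k+1}^j e(\nu)$ because $\psi_k$ is not injective. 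The missing step is to strip off the $\psi_k$: since $h_{N-1}$ is symmetric it commutes with $\psi_k$ on $e(\nu)$, so $h_{N-1}(x_k,x_{k+1})\psi_k e(\nu)=0$; then right-multiply by $x_{k+1}$ and use $\psi_k x_{k+1}e(\nu)=x_k\psi_k e(\nu)+e(\nu)$ to get
\[
0=h_{N-1}(x_k,x_{k+1})\bigl(x_k\psi_k+1\bigr)e(\nu)=x_k\cdot 0+h_{N-1}(x_k,x_{k+1})\,e(\nu),
\]
i.e.\ $h_{N-1}(x_k,x_{k+1})\,e(\nu)=0$. This is the relation you actually need: it expresses $x_{k+1}^{\,N-1}e(\nu)$ as an element of $x_k\,\bR[x_k,x_{k+1}]\,e(\nu)$, and iterating (using $x_k^N=0$) gives $x_{k+1}^{\,N(N-1)}e(\nu)=0$. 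With this correction your plan goes through.
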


\begin{prop} [\protect{cf. \cite[Prop.4.2]{AP12}}] \label{Prop: self-injective}
The algebra $R^\Lambda(\beta)$ is a self-injective algebra.
\end{prop}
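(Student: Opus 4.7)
The plan is to follow the categorification-based strategy of \cite[Prop.4.2]{AP12}: show that every indecomposable projective $R^\Lambda(\beta)$-module is also injective. Once one has the biadjointness of the $i$-restriction and $i$-induction functors, the argument is entirely formal, so no feature specific to type $D^{(2)}_{\ell+1}$ will intervene.

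First, for each $i \in I$, I would introduce the exact functors
\begin{align*}
E_i \colon R^\Lambda(\beta+\alpha_i)\text{-mod} &\to R^\Lambda(\beta)\text{-mod}, \quad M \mapsto e(\beta,i)M, \\
F_i \colon R^\Lambda(\beta)\text{-mod} &\to R^\Lambda(\beta+\alpha_i)\text{-mod}, \quad M \mapsto R^\Lambda(\beta+\alpha_i)e(\beta,i) \otimes_{R^\Lambda(\beta)} M,
\end{align*}
where $e(\beta,i) = \sum_{\nu \in I^\beta} e(\nu,i)$. Both functors are exact because $e(\beta,i)$ is idempotent, and $F_i$ is tautologically left adjoint to $E_i$ via the tensor-hom adjunction.

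The main step is then to show that $F_i$ is \emph{also} right adjoint to $E_i$ up to an explicit grading shift determined by $\langle h_i,\Lambda-\beta\rangle$. Concretely, one establishes the Kang--Kashiwara short exact sequence of bimodules relating $E_i F_i$ and $F_i E_i$ in the cyclotomic quotient; its proof rests on a careful analysis of how the generators $\psi_k$ and $x_k$ interact with the cyclotomic relation $x_1^{\langle h_{\nu_1},\Lambda\rangle} e(\nu)=0$. This is the sole genuine obstacle, but its proof in \cite{AP12} is type-independent and transfers verbatim. Biadjointness together with exactness of both functors then forces each of $E_i$ and $F_i$ to preserve both projective modules (because a left adjoint of an exact functor preserves projectives) and injective modules (dually).

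Finally, I invoke the categorification of $V(\Lambda)$: every indecomposable projective $P$ of $R^\Lambda(\beta)$ occurs, up to grading shift, as a direct summand of $F_{i_n}\cdots F_{i_1} R^\Lambda(0)$ for some sequence $(i_1,\dots,i_n) \in I^n$ with $\sum_{k} \alpha_{i_k}=\beta$. Since $R^\Lambda(0)\cong \bR$ is trivially projective-injective, and both properties are preserved by each $F_i$ and by passage to direct summands, $P$ is injective. Hence every indecomposable projective of $R^\Lambda(\beta)$ is injective, which means $R^\Lambda(\beta)$ is self-injective.
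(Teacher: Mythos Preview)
Your proposal is correct and matches the paper's approach: the paper gives no independent proof of this proposition, merely citing \cite[Prop.4.2]{AP12}, and your sketch is precisely the argument found there (biadjointness of $E_i,F_i$, hence preservation of injectives, applied to $F_{i_n}\cdots F_{i_1}R^\Lambda(0)$). One small quibble: exactness of $F_i$ is not literally ``because $e(\beta,i)$ is idempotent'' --- it needs $R^\Lambda(\beta+\alpha_i)e(\beta,i)$ to be projective as a right $R^\Lambda(\beta)$-module, and the biadjointness itself is \cite{Kash11} rather than an immediate corollary of the $E_iF_i$/$F_iE_i$ short exact sequence --- but these are part of the same Kang--Kashiwara package you invoke, so the logic stands.
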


\begin{prop} [\protect{\cite[Cor.4.8]{AP12}}] \label{Prop: repn type}
For $w \in \weyl$ and $k\in \Z_{\ge0}$, $R^\Lambda(k\delta)$ and $R^\Lambda(\Lambda - w\Lambda + k\delta)$
have the same number of simple modules and the same representation type.
\end{prop}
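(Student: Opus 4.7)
The plan is to derive both statements from the $\mathfrak{sl}_2$-categorification attached to each simple root, iterated along a reduced expression for $w$. Two ingredients are invoked: the Kang-Kashiwara (= Khovanov-Lauda-Rouquier) categorification of $V(\Lambda)$ on the module categories $R^\Lambda(\gamma)$-mod, and the Chuang-Rouquier theorem producing derived equivalences from $\mathfrak{sl}_2$-categorifications.

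First I would handle the count of simple modules. By the categorification theorem, the number of isomorphism classes of simple $R^\Lambda(\gamma)$-modules (up to grading shift) equals $\dim V(\Lambda)_{\Lambda-\gamma}$. Since $\delta$ is fixed by the Weyl group $\weyl$, we have
\[
\Lambda-(\Lambda-w\Lambda+k\delta) \;=\; w\Lambda - k\delta \;=\; w(\Lambda - k\delta),
\]
and Weyl-invariance of weight multiplicities on the integrable module $V(\Lambda)$ gives $\dim V(\Lambda)_{\Lambda-k\delta} = \dim V(\Lambda)_{w(\Lambda-k\delta)}$. This shows $R^\Lambda(k\delta)$ and $R^\Lambda(\Lambda-w\Lambda+k\delta)$ have the same number of simples.

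Next I would treat representation type. For each $i \in I$, the induction/restriction functors between $R^\Lambda(\gamma)$-mod and $R^\Lambda(\gamma+\alpha_i)$-mod, together with the natural transformations supplied by the generators $x_k$ and $\psi_l$, endow $\bigoplus_\gamma R^\Lambda(\gamma)$-mod with an $\mathfrak{sl}_2$-categorification in the sense of Chuang and Rouquier. Their main theorem then produces, for each simple reflection $r_i$, a derived equivalence between the blocks sitting at the endpoints of an $i$-string. Fixing a reduced decomposition $w = r_{i_1}\cdots r_{i_t}$, each intermediate weight $r_{i_{s+1}}\cdots r_{i_t}(\Lambda - k\delta)$ lies in the Weyl orbit of $\Lambda - k\delta$, hence is a weight of $V(\Lambda)$, so the hypotheses for the Chuang-Rouquier equivalence are met at every step. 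Composing, we obtain a derived equivalence between $R^\Lambda(k\delta)$-mod and $R^\Lambda(\Lambda-w\Lambda+k\delta)$-mod.

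Finally, derived equivalence of finite-dimensional algebras (Proposition~\ref{Prop: finite-dim} guarantees finite-dimensionality) preserves representation type by Rickard's theorem, closing the argument. The principal obstacle is checking that the $\mathfrak{sl}_2$-categorification axioms really hold on the cyclotomic quotients $R^\Lambda(\gamma)$ in the affine setting; once this is verified (it is the content imported from \cite{AP12}), both conclusions follow formally from Weyl-invariance of weight multiplicities and from derived invariance of representation type.
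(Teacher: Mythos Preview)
The paper does not give its own proof here: it imports the result wholesale from \cite[Cor.~4.8]{AP12}. Your outline is exactly the argument that paper uses---weight multiplicities via the crystal/categorification theorem for the count of simples, and Chuang--Rouquier $\mathfrak{sl}_2$-categorification (with the categorical $\mathfrak{sl}_2$-action supplied by Kang--Kashiwara) to produce a chain of derived equivalences along a reduced expression for $w$---so your approach and the cited one coincide.

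One small correction worth flagging: the claim ``derived equivalence preserves representation type'' is not literally Rickard's theorem. What Rickard proves is that a derived equivalence between self-injective algebras induces a stable equivalence of Morita type; invariance of representation type under the latter is due to Krause. In \cite{AP12} this is exactly how the step is carried out, so Proposition~\ref{Prop: self-injective} is an essential input you should invoke rather than appeal to a general (and false without self-injectivity or some substitute) statement about derived equivalences.
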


Let $\A = \Z[q, q^{-1}]$. We denote by $U_\A(\g)$ the $\A$-form of the quantum group $U_q(\g)$ associated with the Cartan datum $(\cmA, \wlP, \Pi, \Pi^{\vee})$ and
by $V_\A(\Lambda)$ the $\A$-form of highest weight $U_q(\g)$-module $V_q(\Lambda)$, respectively.

Let us denote the direct sum of split Grothendieck groups of additive categories $R^\Lambda(\beta)\text{-}\proj^\Z$ of finitely generated projective
graded left $R^\Lambda(\beta)$-modules by
\begin{align*}
K^\Z_0(R^{\Lambda})=\bigoplus_{\beta \in \rlQ^+} K_0(R^{\Lambda}(\beta)\text{-}\proj^\Z).
\end{align*}
Then, $K^\Z_0(R^{\Lambda})$ has an $\A$-module structure induced by the $\Z$-grading on $R^\Lambda(n)$.

Let $e(\nu,\nu')$ be the idempotent corresponding to the concatenation of $\nu$ and $\nu'$.
When $\nu'=i$, we simply write $e(\nu,i) = e(\nu,(i))$ and $e(\beta,i) = \sum_{\nu\in I^\beta} e(\nu, i)$. For $\beta\in \rlQ^+$ and $i\in I$, we define functors between categories of finitely generated graded modules as follows:
\begin{align*}
E_i &: R^\Lambda(\beta + \alpha_i)\text{-mod}^\Z \longrightarrow R^\Lambda(\beta)\text{-mod}^\Z,  \quad N \mapsto e(\beta,i)N, \\
F_i &: R^\Lambda(\beta )\text{-mod}^\Z \longrightarrow R^\Lambda(\beta+ \alpha_i)\text{-mod}^\Z, \quad M \mapsto R^{\Lambda}(\beta+\alpha_i) e(\beta,i)\otimes_{R^{\Lambda}(\beta)}M.
\end{align*}

\begin{thm} [\protect{\cite[Thm.5.2]{KK11}}] \label{Thm: Ei Fi}
Let $l_i = \langle h_i, \Lambda - \beta  \rangle$, for $i\in I$. Then one of the following  isomorphisms of endofuctors holds.
\begin{enumerate}
\item If $l_i \ge 0$, then
$$ q_i^{-2}F_i E_i \oplus \bigoplus_{k=0}^{l_i- 1} q_i^{2k} \mathrm{id} \buildrel \sim \over \longrightarrow E_iF_i .$$
\item If $l_i \le 0$, then
$$ q_i^{-2}F_i E_i  \buildrel \sim \over \longrightarrow  E_iF_i \oplus \bigoplus_{k=0}^{-l_i- 1} q_i^{-2k-2}  \mathrm{id} .$$
\end{enumerate}
\end{thm}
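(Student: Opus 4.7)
The plan is to realize both $E_iF_i$ and $F_iE_i$ as tensor product endofunctors on $R^\Lambda(\beta)$-mod via explicit bimodule presentations and then to compare these bimodules via a short exact sequence. By the definitions, $E_iF_iM\cong e(\beta,i)R^\Lambda(\beta+\alpha_i)e(\beta,i)\otimes_{R^\Lambda(\beta)}M$, and with $E_i$ reindexed to act on $R^\Lambda(\beta)$-mod by $M\mapsto e(\beta-\alpha_i,i)M$, one obtains $F_iE_iM\cong R^\Lambda(\beta)e(\beta-\alpha_i,i)\otimes_{R^\Lambda(\beta-\alpha_i)}e(\beta-\alpha_i,i)M$. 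The statement therefore reduces to a graded isomorphism of $(R^\Lambda(\beta),R^\Lambda(\beta))$-bimodules that may be checked before tensoring with $M$.

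First I would construct a natural transformation between the two composites. Left multiplication by the crossing element $\psi_n\in R^\Lambda(n)$, together with the appropriate insertions of idempotents $e(\beta,i)$ and $e(\beta-\alpha_i,i,i)$, produces a bimodule map $q_i^{-2}F_iE_i\to E_iF_i$ in the regime $l_i\ge 0$ (and a map in the opposite direction when $l_i\le 0$). A Mackey-type filtration on $e(\beta,i)R^\Lambda(\beta+\alpha_i)e(\beta,i)$ using the standard spanning set of the quiver Hecke algebra, which factors any basis element ending in residue $i$ either through the smaller idempotent $e(\beta-\alpha_i,i,i)$ or through the top crossing $\psi_n$, yields a short exact sequence
$$0\longrightarrow q_i^{-2}F_iE_iM\longrightarrow E_iF_iM\longrightarrow K_iM\longrightarrow 0,$$
in which $K_iM$ is generated as a left $R^\Lambda(\beta)$-module by the images of $x_n^k e(\beta,i)\otimes 1$ for $k\ge 0$.

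Next I would identify $K_i$ with $\bigoplus_{k=0}^{l_i-1}q_i^{2k}\,\mathrm{id}$. Using the defining relations one translates the cyclotomic relation $x_1^{\langle h_{\nu_1},\Lambda\rangle}e(\nu)=0$ into a relation on the $n$-th strand, which forces $x_n^{l_i}e(\beta,i)$ to fall into the image of the preceding map; the tower of powers of $x_n$ is thus truncated at exactly $k=l_i-1$. Each surviving generator $x_n^k e(\beta,i)$ contributes a copy of the identity bimodule, while the grading rule $\deg(x_k e(\nu))=(\alpha_{\nu_k}|\alpha_{\nu_k})=2d_{\nu_k}$ produces the shift $q_i^{2k}$. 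Running the same analysis with the roles of source and target exchanged gives statement (2) when $l_i\le 0$, with the tower now producing a kernel instead of a cokernel.

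Finally, the short exact sequence must be shown to split. Since $R^\Lambda(\beta)$ is finite-dimensional self-injective by Proposition 2.3 and the identity summands $q_i^{2k}\,\mathrm{id}$ are projective as bimodules on one side, an idempotent lifting compatible with the $\mathbb{Z}$-grading furnishes the splitting. The main obstacle will be the Mackey decomposition at the first step: controlling the interplay of the top crossing $\psi_n$ with the quadratic and braid relations, and with the cyclotomic ideal generated by $x_1^{\langle h_{\nu_1},\Lambda\rangle}$, demands an intricate inductive argument on the height of $\beta$ and on the number of strands. This is the technical heart of the argument, and is exactly where the proof in \cite{KK11} concentrates its effort.
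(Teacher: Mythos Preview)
The paper does not prove this theorem at all: it is stated with the citation \cite[Thm.5.2]{KK11} and used as a black box throughout. There is therefore no ``paper's own proof'' to compare against; the entire argument lives in Kang--Kashiwara.

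That said, your outline is broadly faithful to the Kang--Kashiwara strategy: the bimodule descriptions of $E_iF_i$ and $F_iE_i$, the map built from the last crossing, and the identification of the remaining piece with a truncated polynomial tower in $x_n$ are all correct in spirit. One point where your sketch is too optimistic is the splitting step. Self-injectivity of $R^\Lambda(\beta)$ and projectivity of the identity summands as one-sided modules do \emph{not} by themselves split a short exact sequence of $(R^\Lambda(\beta),R^\Lambda(\beta))$-bimodules; a bimodule splitting requires more. In \cite{KK11} the isomorphism is not obtained by splitting an abstract sequence but by writing down explicit maps in both directions and proving directly, via a delicate induction, that they are mutually inverse. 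The hard work is exactly in showing that the candidate inverse lands in the cyclotomic quotient and that no information is lost---this is where the interaction with the cyclotomic relation on $x_1$ (transported to $x_n$) is essential, as you note, but it is handled constructively rather than by an existence-of-splitting argument.
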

Moreover, $E_i$ and $F_i$ give a $U_\A(\g)$-module structure to $K^\Z_0(R^{\Lambda})$.

\begin{thm}[\protect{\cite[Thm.6.2]{KK11}}] \label{Thm: categorification thm}
There exists a $U_\A(\g)$-module isomorphism between $K^\Z_0(R^{\Lambda})$ and $V_\A(\Lambda)$.
\end{thm}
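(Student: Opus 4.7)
The plan is to exhibit $K^\Z_0(R^\Lambda)$ as an integrable highest weight $U_\A(\g)$-module of highest weight $\Lambda$ and then invoke the universal property of $V_\A(\Lambda)$.

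First I would equip $K^\Z_0(R^\Lambda)$ with a $U_\A(\g)$-module structure via the endofunctors $E_i$ and $F_i$. Theorem~\ref{Thm: Ei Fi} already encodes the categorified $[E_i,F_i]$ commutation relation. The quantum Serre relations arise from the corresponding functorial isomorphisms on the uncyclotomized quiver Hecke algebras (established by Khovanov-Lauda and Rouquier), which descend to the cyclotomic quotient. The action of the Cartan part is read off from the block decomposition $R^\Lambda=\bigoplus_{\beta\in\rlQ^+}R^\Lambda(\beta)$, while the parameter $q$ is supplied by the internal $\Z$-grading, giving an $\A$-module structure.

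Next, I would locate a cyclic highest weight vector. The class $[R^\Lambda(0)]=[\bR]\in K^\Z_0(R^\Lambda)_\Lambda$ is annihilated by every $E_i$, and the cyclotomic relation $x_1^{\langle h_{\nu_1},\Lambda\rangle}e(\nu)=0$ combined with finite-dimensionality of each $R^\Lambda(\beta)$ (Proposition~\ref{Prop: finite-dim}) forces $F_i$ to act locally nilpotently. Hence the submodule generated by $[R^\Lambda(0)]$ is integrable of highest weight $\Lambda$, and the universal property of $V_\A(\Lambda)$ provides a surjective $U_\A(\g)$-homomorphism
\[
\Phi\colon V_\A(\Lambda)\twoheadrightarrow K^\Z_0(R^\Lambda)
\]
sending the highest weight generator to $[R^\Lambda(0)]$. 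Surjectivity onto each weight space is then automatic, since every indecomposable projective $R^\Lambda(\beta)$-module is a direct summand of an iterated induction of the trivial module.

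The hard part is injectivity, which I would establish by comparing $\A$-ranks weight-by-weight. The Weyl-Kac character formula gives $\dim_{\Q(q)}V_q(\Lambda)_{\Lambda-\beta}=|B(\Lambda)_{\Lambda-\beta}|$, whereas $R^\Lambda(\beta)\text{-}\proj^\Z$ is Krull-Schmidt and, by the crystal-theoretic classification of irreducible modules due to Lauda-Vazirani, the indecomposable graded projectives are parametrized precisely by $B(\Lambda)_{\Lambda-\beta}$. Matching these cardinalities forces $\Phi$ to be an isomorphism after extending scalars to $\Q(q)$. The main obstacle is to sharpen this into an $\A$-linear isomorphism: the grading shifts in the categorical action must correspond to the bar-involution on $V_\A(\Lambda)$. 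This is where the self-injectivity of $R^\Lambda(\beta)$ (Proposition~\ref{Prop: self-injective}) enters, since it identifies $q\mapsto q^{-1}$ with graded duality on the projective modules and so pins down the $\A$-structure.
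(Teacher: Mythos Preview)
The paper does not supply its own proof of this statement: Theorem~\ref{Thm: categorification thm} is quoted verbatim from \cite[Thm.~6.2]{KK11} and left unproved, serving only as background for the categorification framework used later. So there is no in-paper argument to compare against.

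That said, a few remarks on your sketch. The overall strategy---show that $K^\Z_0(R^\Lambda)$ is an integrable highest weight module and then identify it with $V_\A(\Lambda)$---is indeed the one Kang--Kashiwara follow, and Theorem~\ref{Thm: Ei Fi} is the decisive technical input. However, your injectivity step is problematic. You appeal to the Lauda--Vazirani crystal classification (Theorem~\ref{Thm: crystals}) to count irreducibles and hence rank the weight spaces, but in the logical development of \cite{KK11} the categorification theorem comes \emph{first}; the crystal isomorphism is established afterwards (and in \cite{LV11} by different means that do not feed back into the rank computation you need). Invoking it here risks circularity. In Kang--Kashiwara the injectivity over $\Q(q)$ is instead a consequence of the standard fact that an integrable module over the quantum group generated by a highest weight vector of dominant weight is automatically irreducible---no separate dimension count is needed. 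The passage to the $\A$-form then uses that $K^\Z_0(R^\Lambda)$ is a free $\A$-module (projectives give an $\A$-basis), not the bar-involution argument you propose; self-injectivity of $R^\Lambda(\beta)$ plays no role at this point.
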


Specializing $q \rightarrow 1$ on both sides of the isomorphism in Theorem \ref{Thm: categorification thm}, we have a $\g$-module isomorphism
\begin{align} \label{Eq: specialize q->1}
V_\Z(\Lambda) \simeq K_0(R^\Lambda) := \bigoplus_{\beta \in \rlQ^+} K_0(R^\Lambda(\beta)\text{-proj}),
\end{align}
where $V_\Z(\Lambda)$ is the Kostant $\Z$-form of the highest weight module $V(\Lambda)$, and $R^\Lambda(\beta)\text{-proj}$ is the category of
finitely generated projective $R^\Lambda(\beta)$-modules.

By \cite[Thm.3.5]{Kash11} and \cite[Lem.3.3]{AP12}, the Shapovalov form $(\ ,\ )$ of $V_\Z(\Lambda)$
can be understood as dimension of the space of homomorphisms between projective modules. Namely, if $u,v \in V_\Z(\Lambda)_{\Lambda-\beta} $
correspond to actual projective $R^\Lambda(\beta)$-modules $U, V \in K_0(R^\Lambda(\beta)\text{-proj})$ under the isomorphism $\eqref{Eq: specialize q->1}$, then we have
\begin{align} \label{Eq: Shapovalov form}
(u,v) = \dim \Hom_{R^{\Lambda}(\beta)}(U,V).
\end{align}

Let $\sB(\Lambda)$ be the set of all isomorphism classes of irreducible graded $R^{\Lambda}(\beta)$-modules, for all $\beta\in \mathsf{Q}^+$.
For $i\in I$ and $ L \in \sB(\Lambda)$, the Kashiwara operators are defined by
$$ \tilde{e}_i(L) = \Soc(E_i L)\quad \text{ and }\quad \tilde{f}_i(L) = \Top ( F_i L ) , $$
which give a $U_q(\g)$-crystal structure on $\sB(\Lambda)$.
\begin{thm}[\protect{\cite[Thm.7.5]{LV11}}] \label{Thm: crystals}
The crystal $\sB(\Lambda)$ is isomorphic to the crystal $B(\Lambda)$ of the highest weight $U_q(\g)$-module $V_q(\Lambda)$.
\end{thm}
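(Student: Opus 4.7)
The plan is to bootstrap the crystal statement from the categorification result (Theorem \ref{Thm: categorification thm}) that is already in place, using a standard Grojnowski--Kleshchev style analysis of the functors $E_i$ and $F_i$ on simple modules. Since Proposition \ref{Prop: finite-dim} ensures finiteness of dimensions, everything takes place in a finite-dimensional setting, and by Proposition \ref{Prop: self-injective} the algebras $R^\Lambda(\beta)$ are self-injective, so that $E_i$ and $F_i$ are biadjoint up to grading shifts. I would then show that the combinatorially defined operators $\tilde e_i = \Soc\circ E_i$ and $\tilde f_i = \Top\circ F_i$ actually equip $\sB(\Lambda)$ with a crystal structure, and finally identify that crystal with $B(\Lambda)$ by a uniqueness argument.

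The first main step is to prove a socle/head simplicity statement: for every simple graded $L \in \sB(\Lambda)$ and every $i\in I$, both $\Soc(E_i L)$ and $\Top(F_i L)$ are simple (or zero). The key tool here is Theorem \ref{Thm: Ei Fi}, which expresses $E_i F_i$ as a direct sum of $F_i E_i$ and copies of the identity (or vice versa). Combined with the biadjunction between $F_i$ and $E_i$, this gives, for a simple $L$ with $\varepsilon_i(L):=\max\{k\ge 0 : E_i^k L\ne 0\}$ (finite by Proposition \ref{Prop: finite-dim}), the equality
\begin{equation*}
\dim\,\Hom(F_i L, F_i L) = \dim\,\Hom(E_i F_i L, L),
\end{equation*}
and an analysis of this Hom space forces $\Top(F_i L)$ to be simple with multiplicity one. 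A symmetric argument handles $\Soc(E_i L)$. One also recovers $\tilde e_i\tilde f_i L = L$ whenever $\tilde f_i L\ne 0$ and $\tilde f_i\tilde e_i L = L$ whenever $\tilde e_i L \ne 0$ from the same bi-adjunction together with the direct sum decomposition in Theorem \ref{Thm: Ei Fi}.

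The second step is to verify the remaining crystal axioms. Weight behavior is automatic from the definition of $R^\Lambda(\beta)$. One needs $\varphi_i(L) - \varepsilon_i(L) = \langle h_i,\wt(L)\rangle$; this comes from counting using Theorem \ref{Thm: Ei Fi} by iterating $E_i$ until it kills $L$. Strict crystal axioms (such as those of Kashiwara or Stembridge) are then obtained either by direct verification or by invoking the fact that we already know from Theorem \ref{Thm: categorification thm} that the characters of the simples give a basis of $V_\A(\Lambda)$ at $q=0$, so they form a crystal basis in the abstract sense.

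The third step is the identification $\sB(\Lambda)\cong B(\Lambda)$. The algebra $R^\Lambda(0)=\bR$ has a unique simple module, which provides a unique highest weight element in $\sB(\Lambda)$ of weight $\Lambda$, annihilated by every $\tilde e_i$. By induction on $|\beta|$, combined with the fact that $F_i$ is nonzero on any simple $L$ with $\varphi_i(L)>0$, the crystal $\sB(\Lambda)$ is connected and generated from this highest weight element. Since the weight multiplicities agree with those of $V_q(\Lambda)$ by Theorem \ref{Thm: categorification thm}, the uniqueness of the highest weight crystal (Kashiwara's theorem) yields $\sB(\Lambda)\cong B(\Lambda)$.

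The main technical obstacle is the very first step: ensuring that $\Soc(E_i L)$ is actually simple rather than just semisimple, which is needed for $\tilde e_i$ to produce a well-defined element of $\sB(\Lambda)$. This requires showing that all irreducible constituents of the socle share the same $\varepsilon_i$-value and that the multiplicity is exactly one, and is where the fine interplay between the short exact sequence in Theorem \ref{Thm: Ei Fi} and the biadjunction is indispensable.
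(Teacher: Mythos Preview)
The paper does not prove this theorem at all: it is stated with a citation to \cite[Thm.~7.5]{LV11} and used as a black box. So there is no ``paper's own proof'' to compare against; your proposal is already more than what the paper supplies.

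As for the proposal itself, your outline is the standard Grojnowski--Kleshchev strategy and is essentially sound, but it is worth noting that it is \emph{not} how Lauda--Vazirani actually argue. Their proof works first at the level of the non-cyclotomic algebras $R(\beta)$, establishing a crystal structure on simple $R(\beta)$-modules and identifying it with $B(\infty)$ via Kashiwara--Saito's characterization; only then do they pass to the cyclotomic quotient to obtain $B(\Lambda)$. In particular, they do not use Theorem~\ref{Thm: Ei Fi} (Kang--Kashiwara), which appeared slightly later. Your route---bootstrapping directly from Theorem~\ref{Thm: Ei Fi} at the cyclotomic level---is logically valid and arguably cleaner once that theorem is available, but the socle-simplicity step you flag as the main obstacle is genuinely delicate: the equality $\dim\Hom(F_iL,F_iL)=\dim\Hom(L,E_iF_iL)$ together with Theorem~\ref{Thm: Ei Fi} only gives a recursion relating $\dim\End(F_iL)$ to $\dim\End(E_iL)$, and turning this into ``$\Top(F_iL)$ is simple'' requires an inductive argument on $\varepsilon_i(L)$ together with the observation that the unique simple quotient has $\varepsilon_i$-value one higher. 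This is doable, but your sketch does not yet pin down that induction.
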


\vskip 1em

\section{Dimension formula for $\fqH(\beta)$}
In this section, we give a dimension formula for finite quiver Hecke algebras $\fqH(\beta)$
in terms of combinatorics of shifted Young diagrams.
The Fock space $\F$ in
Section \ref{Sec: Fock space} and the categorication $\eqref{Eq: specialize q->1}$
in Section \ref{Sec: quiver Hecke algs} are key ingredients for deducing the dimension formula.

For a shifted Young diagram $\lambda$,
\begin{itemize}
\item if we may remove a box of residue $i$ from $\lambda$ and get a new shifted Young diagram, then we write $\lambda \nearrow \sBox{i}$ for the resulting diagram,
\item if we may add a box of residue $i$ to $\lambda$ and get a new shifted Young diagram, then we write $\lambda \swarrow \sBox{i}$ for the resulting diagram.
\end{itemize}
Then, it follows from $\eqref{Eq: affine YD operations}$ and $\eqref{Eq: finite YD operations}$ that
\begin{align} \label{Eq: Kashiwara operators}
e_i |\lambda\rangle = \sum_{\mu = \lambda \nearrow \ssBox{i} } \mathrm{m}(\lambda, \mu) |\mu\rangle, \qquad f_i |\lambda\rangle = \sum_{\mu = \lambda \swarrow \ssBox{i} } |\mu\rangle,
\end{align}
where $$ \mathrm{m}(\lambda, \mu) = \left\{
                           \begin{array}{ll}
                             2 & \hbox{ if either ($\wt(\mu)= \wt(\lambda)+\alpha_0$ and $l(\lambda)=l(\mu)$) or ($\wt(\mu)= \wt(\lambda)+\alpha_\ell$),}  \\
                             1 & \hbox{ otherwise.}
                           \end{array}
                         \right.
$$
The coefficients appearing in $\eqref{Eq: Kashiwara operators}$ are different from those in \cite[(3.1.1)]{AP12}.
The reduction processes using the Fock spaces for type $A_{2\ell}^{(2)}$ and type $D_{\ell+1}^{(2)}$ give that difference,
which yields different dimension formulas for $\fqH(\beta)$.

For $\lambda \vdash n$ and $\nu \in I^n$, we set
$$ K(\lambda, \nu) = | \{ T\in \ST(\lambda) \mid \nu = \res(T)  \} | $$
so that
\begin{align} \label{Eq: sum of K}
|\ST(\lambda)| = \sum_{\nu \in I^{\Lambda_0 - \wt(\lambda)} }  K(\lambda, \nu).
\end{align}
Recall the element $\mathsf{d}$ defined in $\eqref{Eq: elt d}$. Note that, if $\wt(\lambda) = \Lambda_0 - \sum_{i\in I}k_i\alpha_i$,
then $k_0+k_\ell = -\langle \mathsf{d}, \wt(\lambda) \rangle $.
Then, the following Lemma \ref{Lem: dimension formula}, Theorem \ref{Thm: dimension formula} and Corollary \ref{Cor: dimension formula}
can be proved in the same manner as \cite[Lem.3.2, Thm.3.4, Cor.3.5]{AP12}.

\begin{lemma} \label{Lem: dimension formula}
For $\lambda \vdash n$ and $\nu = (\nu_1, \nu_2, \ldots, \nu_n)\in I^n$, we obtain
\begin{align*}
e_{\nu_1}e_{\nu_2} \cdots e_{\nu_n} |\lambda\rangle &= 2^{-\langle \mathsf{d}, \wt(\lambda)\rangle - l(\lambda)}  K(\lambda, \nu) |0\rangle, \\
f_{\nu_n}f_{\nu_{n-1}} \cdots f_{\nu_1} |0\rangle &= \sum_{\mu \vdash n} K(\mu, \nu)  |\mu\rangle.
\end{align*}
\end{lemma}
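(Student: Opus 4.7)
The plan is to prove both identities simultaneously by induction on $n=|\lambda|$, with \eqref{Eq: Kashiwara operators} supplying the single-step rule and the case $n=0$ being trivial.

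For the second identity the induction is routine because no multiplicity factor intervenes on the $f$-side. Setting $\nu'=(\nu_1,\ldots,\nu_{n-1})$, the induction hypothesis combined with \eqref{Eq: Kashiwara operators} gives
\begin{align*}
f_{\nu_n}f_{\nu_{n-1}} \cdots f_{\nu_1}|0\rangle
= \sum_{\mu\vdash n}\Biggl(\sum_{\substack{\lambda\vdash n-1\\ \mu=\lambda\swarrow\ssBox{\nu_n}}}K(\lambda,\nu')\Biggr)|\mu\rangle ,
\end{align*}
and the inner sum is $K(\mu,\nu)$ via the bijection between $\{T\in\ST(\mu)\mid \res(T)=\nu\}$ and pairs $(S,b)$ with $S\in\ST(\lambda)$, $\res(S)=\nu'$, and $b$ the added box, obtained by recording the position of the entry $n$.

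For the first identity, a weight count forces $e_{\nu_1}\cdots e_{\nu_n}|\lambda\rangle$ to be a scalar multiple of $|0\rangle$; unwinding the iterated $e$-actions using \eqref{Eq: Kashiwara operators} yields
\begin{align*}
e_{\nu_1}e_{\nu_2} \cdots e_{\nu_n}|\lambda\rangle
=\Biggl(\sum_{\substack{T\in \ST(\lambda)\\ \res(T)=\nu}}\prod_{k=1}^n\mathrm{m}\bigl(\lambda^{(k-1)},\lambda^{(k)}\bigr)\Biggr)|0\rangle,
\end{align*}
where, for a given $T$, $\lambda^{(k)}$ is obtained from $\lambda$ by deleting the boxes containing $n,n-1,\ldots,n-k+1$. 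It therefore suffices to show that the product $\prod_k \mathrm{m}(\lambda^{(k-1)},\lambda^{(k)})$ is independent of $T$ and equals $2^{-\langle \mathsf{d},\wt(\lambda)\rangle - l(\lambda)}$, so that the sum collapses to $K(\lambda,\nu)\cdot 2^{-\langle \mathsf{d},\wt(\lambda)\rangle - l(\lambda)}\,|0\rangle$.

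This constancy will be the main obstacle. By definition $\mathrm{m}=2$ precisely when the box removed at step $k$ has residue $\ell$, or has residue $0$ and its removal does not decrease the depth. Only the rightmost box of a row can be an outer corner of a shifted Young diagram, so within each row boxes are removed from right to left; the leftmost box $(i,i)$ of row $i$---which by the pattern \eqref{Eq: residue pattern} has residue $0$---is therefore the last survivor of row $i$, and precisely its removal decreases the depth. Hence, writing $k_i$ for the number of residue-$i$ boxes in $\lambda$, exactly $l(\lambda)$ of the $k_0$ residue-$0$ boxes always produce a factor $1$ and the remaining $k_0-l(\lambda)$ always produce a factor $2$, while every one of the $k_\ell$ residue-$\ell$ boxes produces a factor $2$, regardless of how removals from different rows are interleaved. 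Using $\wt(\lambda)=\Lambda_0-\sum_{i\in I}k_i\alpha_i$ and $\langle \mathsf{d},\alpha_i\rangle=\delta_{i0}+\delta_{i\ell}$ from \eqref{Eq: elt d}, the exponent $(k_0-l(\lambda))+k_\ell$ is exactly $-\langle \mathsf{d},\wt(\lambda)\rangle -l(\lambda)$, completing the argument.
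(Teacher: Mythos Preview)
Your proof is correct and is essentially the argument the paper has in mind: the paper omits the proof and refers to the analogous lemma in \cite{AP12}, where the same induction on $n$ using the single-step rule \eqref{Eq: Kashiwara operators} is carried out. Your identification of the key point---that the product of the multiplicities $\mathrm{m}(\lambda^{(k-1)},\lambda^{(k)})$ depends only on $\lambda$ and not on the particular standard tableau, because the diagonal box $(i,i)$ of each row is always the unique residue-$0$ removal that drops the depth---is exactly the observation needed to adapt the $A^{(2)}_{2\ell}$ argument to the present coefficients.
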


\begin{thm} \label{Thm: dimension formula}
Let $\lambda \vdash n$ be a shifted Young diagram. For $\beta \in \rlQ^+$ with $|\beta|=n$ and $\nu, \nu' \in I^\beta$, we have
\begin{align*}
\dim e(\nu')\fqH(n)e(\nu) &= \sum_{\lambda \vdash n} 2^{-\langle \mathsf{d}, \wt(\lambda) \rangle - \l(\lambda)} K(\lambda, \nu') K(\lambda, \nu), \\
\dim \fqH(\beta) &= \sum_{\lambda \vdash n,\ \wt(\lambda)= \Lambda_0 - \beta} 2^{-\langle \mathsf{d}, \wt(\lambda) \rangle - \l(\lambda)} |\ST(\lambda)|^2, \\
\dim \fqH(n) &= \sum_{\lambda \vdash n} 2^{-\langle \mathsf{d}, \wt(\lambda) \rangle - \l(\lambda)} |\ST(\lambda)|^2.
\end{align*}
\end{thm}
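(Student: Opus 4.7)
The plan is to adapt the strategy of \cite[Lem.3.2, Thm.3.4, Cor.3.5]{AP12}: translate the dimension of the spherical subalgebra $e(\nu')\fqH(\beta)e(\nu)$ into a Shapovalov pairing on $V_\Z(\Lambda_0)$, and then evaluate that pairing inside the Fock space $\mathcal{F}$ using Lemma~\ref{Lem: dimension formula}. Under the categorification isomorphism \eqref{Eq: specialize q->1}, induction on $|\beta|$ via the definition of $F_i$ shows that the class $[\fqH(\beta)e(\nu)]$ corresponds to the monomial vector $u_\nu := f_{\nu_n} f_{\nu_{n-1}} \cdots f_{\nu_1} v_{\Lambda_0} \in V_\Z(\Lambda_0)$. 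Combining the standard identification $\Hom_{\fqH(\beta)}(\fqH(\beta)e(\nu'), \fqH(\beta)e(\nu)) \simeq e(\nu')\fqH(\beta)e(\nu)$ with the Shapovalov-form interpretation \eqref{Eq: Shapovalov form} yields
\begin{equation*}
\dim e(\nu')\fqH(\beta)e(\nu) = (u_\nu,\, u_{\nu'}).
\end{equation*}

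To compute this pairing, recall that the $\g$-submodule of $\mathcal{F}$ generated by $|0\rangle$ is isomorphic to $V(\Lambda_0)$, and the Shapovalov form on $V(\Lambda_0)$ is uniquely characterized by $(v_{\Lambda_0}, v_{\Lambda_0}) = 1$ and the adjunction $(f_i w, w') = (w, e_i w')$. Using this adjunction to shift all $f$'s across and then applying Lemma~\ref{Lem: dimension formula} --- first to expand $f_{\nu'_n} \cdots f_{\nu'_1}|0\rangle = \sum_{\mu \vdash n} K(\mu, \nu') |\mu\rangle$, and then to evaluate $e_{\nu_1} \cdots e_{\nu_n}|\mu\rangle = 2^{-\langle\mathsf{d}, \wt(\mu)\rangle - l(\mu)} K(\mu, \nu) |0\rangle$ --- gives
\begin{equation*}
(u_\nu,\, u_{\nu'}) = \sum_{\mu \vdash n} 2^{-\langle\mathsf{d},\wt(\mu)\rangle - l(\mu)}\, K(\mu,\nu)\, K(\mu,\nu'),
\end{equation*}
where the nonzero contributions force $\wt(\mu) = \Lambda_0 - \beta$. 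This is the first formula.

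The remaining two formulas follow formally: summing over $\nu, \nu' \in I^\beta$ and using the identity $\sum_{\nu} K(\lambda, \nu) = |\ST(\lambda)|$ from \eqref{Eq: sum of K} produces the formula for $\dim \fqH(\beta)$, and further summing over $\beta \in \rlQ^+$ with $|\beta| = n$ produces that for $\dim \fqH(n)$. The principal delicate point is to ensure that the bilinear evaluation one naturally carries out inside $\mathcal{F}$ really coincides, after the proper normalization, with the Shapovalov form on $V(\Lambda_0)$: the factors of $2$ in the $e_0$ and $e_\ell$ actions of \eqref{Eq: affine YD operations} make the naive pairing differ from the Shapovalov form, and the factor $2^{-\langle \mathsf{d}, \wt(\mu)\rangle - l(\mu)}$ already present in Lemma~\ref{Lem: dimension formula} is exactly what tracks this discrepancy. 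Once this normalization is settled, the argument is a word-for-word adaptation of the $A^{(2)}_{2\ell}$ case in \cite{AP12}.
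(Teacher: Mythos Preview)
Your proposal is correct and follows precisely the approach the paper intends: the paper gives no independent proof but simply states that Theorem~\ref{Thm: dimension formula} ``can be proved in the same manner as \cite[Thm.3.4]{AP12}'', and your argument---identifying $[\fqH(\beta)e(\nu)]$ with $f_{\nu_n}\cdots f_{\nu_1}v_{\Lambda_0}$ via \eqref{Eq: specialize q->1}, invoking \eqref{Eq: Shapovalov form}, and evaluating the resulting pairing as the coefficient of $|0\rangle$ in $e_{\nu_1}\cdots e_{\nu_n}f_{\nu'_n}\cdots f_{\nu'_1}|0\rangle$ inside $\mathcal{F}$ using Lemma~\ref{Lem: dimension formula}---is exactly that method. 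One minor clarification: you do not actually need a separate normalization step for the Shapovalov form, since the coefficient of $|0\rangle$ in any highest weight module of highest weight $\Lambda_0$ computes the Shapovalov pairing directly; the power of $2$ in Lemma~\ref{Lem: dimension formula} simply records the effect of the coefficients in the $e_i$-action \eqref{Eq: affine YD operations} on that coefficient, rather than correcting a discrepancy between two forms.
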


\begin{cor} \label{Cor: dimension formula}
\begin{enumerate}
\item Let $\nu \in I^n$. Then, $e(\nu) \ne 0 $ in $\fqH(n)$ if and only if $\nu$ may be obtained from a standard tableau $T$ as $\nu = \res(T)$.
\item We have the following hook length formula.
$$ \dim \fqH(\beta) = \sum_{\lambda \vdash n,\ \wt(\lambda)=\Lambda_0 - \beta} 2^{-\langle \mathsf{d}, \wt(\lambda) \rangle - l(\lambda)} \left( \frac{n!}{\prod_{(i,j)\in \lambda}h_{i,j}} \right)^2, $$
where $h_{i,j}$ is the hook length of $(i,j) \in \lambda$.
\item For a natural number $n$, we have
$$ n! = \sum_{\beta \in \rlQ^+,\ |\beta|=n} 2^{n -\langle \mathsf{d}, \beta \rangle }\dim \fqH(\beta). $$
\end{enumerate}
\end{cor}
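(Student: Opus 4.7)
The plan is to derive all three statements directly from Theorem \ref{Thm: dimension formula}, following the template of \cite[Cor.3.5]{AP12}. For part (1), since $e(\nu)$ is an idempotent, $e(\nu) \neq 0$ is equivalent to $\dim e(\nu)\fqH(n)e(\nu) > 0$; setting $\nu' = \nu$ in the first formula of Theorem \ref{Thm: dimension formula} rewrites this dimension as a sum of non-negative terms $2^{-\langle \mathsf{d}, \wt(\lambda)\rangle - l(\lambda)} K(\lambda, \nu)^2$, which is strictly positive iff some $K(\lambda, \nu) > 0$, i.e., iff $\nu = \res(T)$ for some $T \in \ST(\lambda)$. For part (2), I would simply substitute Thrall's shifted hook length formula $|\ST(\lambda)| = n!/\prod_{(i,j) \in \lambda} h_{i,j}$ into the second formula of Theorem \ref{Thm: dimension formula}.

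For part (3), my plan is to decompose $\fqH(n) = \bigoplus_{|\beta|=n} \fqH(\beta)$, multiply the formula from (2) by $2^{n-\langle \mathsf{d}, \beta\rangle}$, and sum over $\beta$. Since $\wt(\lambda) = \Lambda_0 - \beta$ and $\langle \mathsf{d}, \Lambda_0\rangle = 0$, one has $\langle \mathsf{d}, \wt(\lambda)\rangle = -\langle \mathsf{d}, \beta\rangle$, so the two exponents of $2$ collapse and
\[
\sum_{|\beta|=n} 2^{n-\langle \mathsf{d}, \beta\rangle}\dim \fqH(\beta) \;=\; \sum_{\lambda \vdash n} 2^{n-l(\lambda)} |\ST(\lambda)|^2.
\]
The proof then reduces to the classical identity $n! = \sum_{\lambda \vdash n} 2^{n-l(\lambda)} |\ST(\lambda)|^2$, where the sum runs over all shifted (i.e.\ strict) partitions of $n$.

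The main obstacle is this last identity. It is the Plancherel-type formula for the twisted group algebra of $\mathfrak{S}_n$, equivalent to the statement that Schur's irreducible projective representations of $\mathfrak{S}_n$ have squared dimensions summing to $n!$: when $n-l(\lambda)$ is even there is one self-associate summand of dimension $2^{(n-l(\lambda))/2}|\ST(\lambda)|$, contributing $2^{n-l(\lambda)}|\ST(\lambda)|^2$; when $n-l(\lambda)$ is odd there is an associate pair, each of dimension $2^{(n-l(\lambda)-1)/2}|\ST(\lambda)|$, again contributing $2^{n-l(\lambda)}|\ST(\lambda)|^2$ in total. I would cite this classical result rather than reprove it; alternatively, it can be recovered intrinsically by pairing Lemma \ref{Lem: dimension formula} with itself under the natural inner product on the Fock space $\F$, using that the sum over all $\nu \in I^n$ of the coefficient of $|0\rangle$ in $e_{\nu_1}\cdots e_{\nu_n} f_{\nu_n}\cdots f_{\nu_1}|0\rangle$ equals $|I|^n$ on one side and reorganizes into the claimed shifted-tableau sum on the other.
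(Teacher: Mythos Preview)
Your main argument is correct and matches the paper's intended approach: the paper explicitly defers to \cite[Cor.3.5]{AP12} for all three parts, and the Remark immediately following the Corollary confirms that the proof of (3) invokes the classical identity $n! = \sum_{\lambda \vdash n} 2^{n-l(\lambda)}|\ST(\lambda)|^2$ exactly as you do.

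One caution: the parenthetical ``alternative'' at the very end is not right as stated. There is no reason the sum over $\nu\in I^n$ of the constant term of $e_{\nu_1}\cdots e_{\nu_n} f_{\nu_n}\cdots f_{\nu_1}|0\rangle$ should equal $|I|^n=(\ell+1)^n$; that quantity depends on $\ell$, whereas the left-hand side of the identity you are trying to recover is $n!$, independent of $\ell$. If you want an intrinsic Fock-space proof you would instead need a genuine orthogonality/Plancherel argument for the neutral-fermion Fock space, which amounts to reproving Schur's theorem. Since the paper (and \cite{AP12}) are content to cite the classical result, you should simply drop that last sentence.
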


\begin{Rmk}
When $\ell=1$, Theorem \ref{Thm: dimension formula} recovers the following classical combinatorial formula.
$$ n! = \sum_{\lambda \vdash n} 2^{n-l(\lambda)}|\ST(\lambda)|^2.$$
The proof goes as follows. Suppose that $\ell=1$. Then,
$ -\langle \mathsf{d}, \wt(\lambda) \rangle - \l(\lambda) = n-l(\lambda)$
and Theorem \ref{Thm: dimension formula} implies that
$$ \dim \fqH(n) = \sum_{\lambda \vdash n} 2^{n-l(\lambda)}|\ST(\lambda)|^2. $$
On the other hand, since $D^{(2)}_{\ell+1}$ for $\ell=1$ is $A_1^{(1)}$,
there exists an isomorphism of algebras between $\fqH(n)$ and the finite Hecke algebra
$\mathcal{H}_{-1}(S_n)$ associated with the symmetric group $S_n$ and the parameter $q=-1$ \cite{BK09,R08}. Thus, $ \dim \fqH(n) =n!$ and we are done.

Note that the proof of Corollary \ref{Cor: dimension formula}(3) uses the formula, so that we must avoid
its use in the above proof. For this reason, we use a different argument to relate $\dim \fqH(n)$ and $n!$.
\end{Rmk}

\vskip 1em

\section{Representations of $\fqH(\beta)$} \label{Sec: repns}

In this section, we give various representations for the algebras $\fqH(\delta)$, $\fqH(2\delta-\alpha_i)$ and $\fqH(2\delta)$.
Those representations will play crucial roles in proving our main theorem in \S\ref{Sec: repn type}. We assume that $\ell \ge 2 $.
We also suppose that $\mathcal{Q}_{0, 1}(u,v) = u^2 + v$, $\mathcal{Q}_{\ell-1, \ell}(u,v) = u + v^2$ and
$\mathcal{Q}_{i, j}(u,v) = 1$ if $a_{i,j}=0$, for simplicity.

\subsection{Representations of $\fqH(\delta)$} \label{finite case}
We show that the algebra homomorphism
$$ \bR[x]\to \fqH(\delta):\;\; x\mapsto x_{\ell+1}$$
induces an isomorphism of algebras $\bR[x]/(x^2) \simeq \fqH(\delta)$.

Observe that the residue pattern $\eqref{Eq: residue pattern}$ implies that
$\lambda =(\ell+1)$ is the unique shifted Young diagram of weight $\Lambda_0-\delta$, and
$\ST(\lambda)$ consists of the standard tableau
\begin{align} \label{Eq: tableau of fqH(delta)}
\xy
(0,6)*{};(34,6)*{} **\dir{-};
(0,0)*{};(34,0)*{} **\dir{-};
(0,0)*{};(0,6)*{} **\dir{-};
(6,0)*{};(6,6)*{} **\dir{-};
(12,0)*{};(12,6)*{} **\dir{-};
(22,0)*{};(22,6)*{} **\dir{-};
(28,0)*{};(28,6)*{} **\dir{-};
(34,0)*{};(34,6)*{} **\dir{-};
(3,3)*{_0}; (9,3)*{_1}; (17,3)*{\cdots}; (25.3,3)*{_{\ell-1}}; (31,3)*{_{\ell}}; (35.5,0)*{.};
\endxy
\end{align}
Hence, Theorem \ref{Thm: dimension formula} implies that $\dim \fqH(\delta) = 2$. Now, it is straightforward to check that
\begin{align*}
 e(\nu) \mapsto & \left\{
                \begin{array}{ll}
                  (\begin{smallmatrix} 1 & 0 \\ 0 & 1 \end{smallmatrix}) & \hbox{ if } \nu = (0,1, \ldots,  \ell), \\
                  (\begin{smallmatrix} 0 & 0 \\ 0 & 0 \end{smallmatrix}) & \hbox{ otherwise,}
                \end{array}
              \right. \\
\psi_j \mapsto & (\begin{smallmatrix} 0 & 0 \\ 0 & 0 \end{smallmatrix}), \\
x_i \mapsto &  \left\{
                    \begin{array}{ll}
                      (\begin{smallmatrix} 0 & 1 \\ 0 & 0 \end{smallmatrix}) & \hbox{ if } i =\ell+1, \\
                      (\begin{smallmatrix} 0 & 0 \\ 0 & 0 \end{smallmatrix}) & \hbox{ otherwise.}
                    \end{array}
                  \right.
\end{align*}
is a well-defined representation of $\fqH(\delta)$. Hence, we have the algebra isomorphism
$$
\fqH(\delta)\simeq \bR(\begin{smallmatrix} 1 & 0 \\ 0 & 1 \end{smallmatrix})\oplus \bR(\begin{smallmatrix} 0 & 1 \\ 0 & 0 \end{smallmatrix})
\simeq \bR[x]/(x^2).
$$

\subsection{Representations of $\fqH(2\delta-\alpha_i)$} \label{Section: repn of R(2delta-alpha)}
In this subsection, we study $\fqH(2\delta-\alpha_i)$. A key point here is that the irreducible $\fqH(2\delta-\alpha_i)$-module is unique
and we may give its explicit description, for all $i\in I$. We will use
the result to determine the structure of $\fqH(2\delta)$.

Let $h= 2\ell+2$ as before. We consider two rows partition $\lambda(i)= (h-i-1, i ) \vdash h-1$ of weight $\Lambda_0 - 2\delta + \alpha_i$,
for $0\le i\le\ell$. Then the residues of the nodes of $\lambda(i)$ are given as follows:
\begin{equation} \label{Eq: lambda i}
\begin{aligned}
\lambda(i) = \left\{
               \begin{array}{ll}
\xy
(0,6)*{};(66,6)*{} **\dir{-};
(0,0)*{};(66,0)*{} **\dir{-};
(6,-6)*{};(28,-6)*{} **\dir{-};
(0,0)*{};(0,6)*{} **\dir{-};
(6,-6)*{};(6,6)*{} **\dir{-};
(12,-6)*{};(12,6)*{} **\dir{-};
(22,-6)*{};(22,6)*{} **\dir{-};
(28,-6)*{};(28,6)*{} **\dir{-};
(38,0)*{};(38,6)*{} **\dir{-};
(44,0)*{};(44,6)*{} **\dir{-};
(50,0)*{};(50,6)*{} **\dir{-};
(60,0)*{};(60,6)*{} **\dir{-};
(66,0)*{};(66,6)*{} **\dir{-};
(3,3)*{_0}; (9,3)*{_1}; (17,3)*{\cdots}; (25.3,3)*{_{i}}; (33,3)*{\cdots}; (41,3)*{_\ell}; (47,3)*{_\ell}; (55,3)*{\cdots}; (63.3,3)*{_{i+1}};
(9,-3)*{_0}; (17,-3)*{\cdots}; (25.3,-3)*{_{i-1}};
\endxy
& \hbox{ if } i = 0,1,\ldots, \ell-1, \\ \\
\xy
(0,6)*{};(28,6)*{} **\dir{-};
(0,0)*{};(28,0)*{} **\dir{-};
(6,-6)*{};(28,-6)*{} **\dir{-};
(0,0)*{};(0,6)*{} **\dir{-};
(6,-6)*{};(6,6)*{} **\dir{-};
(12,-6)*{};(12,6)*{} **\dir{-};
(22,-6)*{};(22,6)*{} **\dir{-};
(28,-6)*{};(28,6)*{} **\dir{-};
(3,3)*{_0}; (9,3)*{_1}; (17,3)*{\cdots}; (25.3,3)*{_{\ell}};
(9,-3)*{_0}; (17,-3)*{\cdots}; (25.3,-3)*{_{\ell-1}};
\endxy
& \hbox{ if } i=\ell.
               \end{array}
             \right.
\end{aligned}
\end{equation}
It is easy to check that $T\in \ST(\lambda(i))$ is uniquely determined by $\res(T)$. Let $\mathcal{N}=\bR^2$ be the two dimensional vector space
and we define
$$
\xi_1= \begin{pmatrix} 0 & 1 \\ 0 & 0 \end{pmatrix},\quad
\xi_2= \begin{pmatrix} 0 & -1 \\ 0 & 0 \end{pmatrix}, \quad
\partial= \begin{pmatrix} 0 & 0 \\ -1 & 0 \end{pmatrix}.
$$

\begin{Rmk}
We have $\xi_1\partial-\partial\xi_2=-1=\partial\xi_1-\xi_2\partial$, $\xi_1^2=0$, $\xi_1\xi_2=\xi_2\xi_1$ and $\partial^2=0$. These are defining relations for
the cyclotomic nilHecke algebra $NH_2^2$. Indeed,
$NH_2^2$ is isomorphic to $\Mat(2,\bR)$ \cite[Prop.5.3]{Lauda11}, and $\mathcal{N}$ is the unique irreducible $NH_2^2$-module.
\end{Rmk}

We shall define $\fqH(2\delta-\alpha_i)$-modules $\mathcal{L}_i$, for $0\le i\le\ell$. Let
$$ \mathcal{L}_i = \bigoplus_{T\in\ST(\lambda(i))} \mathcal{N} \otimes T$$
be the direct sum of copies of $\mathcal{N}$ such that each of the copies are labeled by $\ST(\lambda(i))$.

\begin{lemma}
If $0\le i\le\ell-1$, we may define an $\fqH(2\delta-\alpha_i)$-module structure on $\mathcal{L}_i$ by
\begin{equation} \label{Eq: Li}
\begin{aligned}
 e(\nu) (v\otimes T) &= \left\{
                \begin{array}{ll}
                  v\otimes T & \hbox{ if } \nu = \res(T), \\
                  0 & \hbox{ otherwise,}
                \end{array}
              \right. \\
x_k (v\otimes T) &= \left\{
                \begin{array}{ll}
                  \xi_1v\otimes T & \hbox{ if $k$ is located at $(1,\ell+1)\in\lambda(i)$}, \\
                  \xi_2v\otimes T & \hbox{ if $k$ is located at $(1,\ell+2)\in\lambda(i)$}, \\
                  0 & \hbox{ otherwise,}
                \end{array}
              \right. \\
\psi_k (v\otimes T) &= \left\{
                         \begin{array}{ll}
                           \partial v \otimes T & \hbox{ if $k$ and $k+1$ are located at $(1,\ell+1)$ and $(1,\ell+2)$}, \\
                           v \otimes s_k T  & \hbox{ if $s_k T$ is standard,}   \\
                           0 & \hbox{ otherwise.}
                         \end{array}
                       \right.
\end{aligned}
\end{equation}
If $i=\ell$, we may define an $\fqH(2\delta-\alpha_\ell)$-module structure on $\mathcal{L}_\ell$ by
\begin{equation}
\begin{aligned}
 e(\nu) (v\otimes T) &= \left\{
                \begin{array}{ll}
                  v\otimes T & \hbox{ if } \nu = \res(T), \\
                  0 & \hbox{ otherwise,}
                \end{array}
              \right. \\
x_k (v\otimes T) &= \left\{
                \begin{array}{ll}
                  \xi_1v\otimes T & \hbox{ if $k$ is located at $(1,\ell+1)\in\lambda(\ell)$}, \\
                  0 & \hbox{ otherwise,}
                \end{array}
              \right. \\
\psi_k (v\otimes T) &= \left\{
                         \begin{array}{ll}
                           v \otimes s_k T  & \hbox{ if $s_k T$ is standard,}   \\
                           0 & \hbox{ otherwise.}
                         \end{array}
                       \right.
\end{aligned}
\label{reducible}
\end{equation}
\end{lemma}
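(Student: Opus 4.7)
The plan is to verify that the formulas in $\eqref{Eq: Li}$ and $\eqref{reducible}$ respect all defining relations of $\fqH$ by checking each on the basis $\{v \otimes T : v \in \mathcal{N},\ T \in \ST(\lambda(i))\}$ of $\mathcal{L}_i$. The relations among the $e(\nu)$'s and the $x_k$'s, together with $x_k e(\nu) = e(\nu) x_k$, $\psi_l e(\nu) = e(s_l\nu)\psi_l$, and $\psi_k \psi_l = \psi_l \psi_k$ for $|k-l|>1$, are immediate: each $v\otimes T$ lies in the $e(\nu)$-summand with $\nu = \res(T)$; both $x_k$ and $\psi_k$ only touch data at the positions of $k$ and $k+1$; and the action of $\psi_k$ either swaps $k$ and $k+1$ in $T$ (so the new residue sequence is $s_k \nu$) or acts internally at the special pair $(1,\ell+1),(1,\ell+2)$ where both residues equal $\ell$ (so the sequence is preserved). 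The cyclotomic relation $x_1^{\langle h_{\nu_1},\Lambda_0\rangle} e(\nu)=0$ reduces to $x_1 e(\nu)=0$ when $\nu_1=0$, which holds because the entry $1$ of any standard tableau of $\lambda(i)$ must occupy $(1,1)$, a non-special position where $x_1$ acts as zero.

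The quadratic relation $\psi_k^2 e(\nu) = \mathcal{Q}_{\nu_k,\nu_{k+1}}(x_k,x_{k+1})e(\nu)$ splits into three cases on a basis vector $v\otimes T$. (i) If $\{k,k+1\}$ occupies the special pair, both sides vanish since $\partial^2=0$ and $\mathcal{Q}_{\ell,\ell}=0$. (ii) If $s_kT$ is standard (not case (i)), then $\psi_k^2(v\otimes T)=v\otimes T$; the essential combinatorial point is that $\nu_k$ and $\nu_{k+1}$ must then be \emph{non-adjacent} in the Dynkin diagram of $D^{(2)}_{\ell+1}$. I plan to verify this by enumerating the allowed pairs of positions $(\mathrm{pos}(k), \mathrm{pos}(k+1))$ in the two-row shifted shape $\lambda(i)$: consecutive entries in the same row force $\mathrm{pos}(k),\mathrm{pos}(k+1)$ to be the special pair (excluded), while cross-row configurations $(1,j),(2,j')$ with $j\ne j'$ admit adjacency of residues only when $j'=j+2$ or $j+j'=2\ell+3$, both of which are ruled out by the column inequality $T(1,j)<T(2,j)$ for standard tableaux. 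Hence $\mathcal{Q}_{\nu_k,\nu_{k+1}}=1$ and both sides equal $v\otimes T$. (iii) Otherwise $\psi_k(v\otimes T)=0$ and $\mathcal{Q}_{\nu_k,\nu_{k+1}}(x_k,x_{k+1})$ is checked to annihilate $v\otimes T$, exploiting the vanishing of $x_k,x_{k+1}$ outside the special positions. The $(\psi x)$-relation $(\psi_k x_l - x_{s_k(l)}\psi_k)e(\nu)$ is treated by a parallel case analysis: away from the special pair it reduces to position tracking, and at the special pair it reduces to the nilHecke identities $\partial\xi_1-\xi_2\partial=-\mathrm{id}$ and $\partial\xi_2-\xi_1\partial=\mathrm{id}$ on $\mathcal{N}$, which reproduce the expected signs $\mp e(\nu)$ when $\nu_k=\nu_{k+1}=\ell$.

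The principal obstacle is the braid relation $(\psi_{k+1}\psi_k\psi_{k+1}-\psi_k\psi_{k+1}\psi_k)e(\nu)$, which requires identifying all triples of positions in $\lambda(i)$ that can carry three consecutive entries $k,k+1,k+2$ in some standard tableau with $\nu_k=\nu_{k+2}$. Since each residue appears at most twice in $\lambda(i)$, the positions of $k$ and $k+2$ must exhaust one of the repeat-residue pairs: the special pair $(1,\ell+1),(1,\ell+2)$; the row-$1$ pairs $(1,r+1),(1,2\ell+1-r)$ for $i+1\le r\le \ell-1$; or the cross-row pairs $(1,r+1),(2,r+2)$ for $0\le r\le i-1$. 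I plan to enumerate these configurations organized by whether the special pair is involved. The most delicate sub-case occurs when $k+1$ (or $k$ or $k+2$) sits at a special position while the other two entries occupy a repeat-residue pair elsewhere, so that the nilHecke action of $\partial,\xi_1,\xi_2$ at the special position must combine with swap actions of the $\psi$'s on the remaining positions and match the difference quotient $(\mathcal{Q}_{\nu_k,\nu_{k+1}}(x_k,x_{k+1})-\mathcal{Q}_{\nu_k,\nu_{k+1}}(x_{k+2},x_{k+1}))/(x_k-x_{k+2})$ on the right-hand side. The $i=\ell$ case $\eqref{reducible}$ is a simplification of this argument: residue $\ell$ no longer repeats, the $\partial$-piece disappears from the formulas, and the verification reduces to the classical symmetric-group action on $\ST(\lambda(\ell))$ together with the non-adjacency argument above.
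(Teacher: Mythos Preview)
Your approach—verifying all defining relations case by case on basis vectors $v\otimes T$—is the same as the paper's, and your treatment of the idempotent, cyclotomic, $x$-commutation, quadratic, and $(\psi x)$-relations is essentially correct. However, there is a genuine gap in your handling of the braid relation.

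You frame the braid verification as ``identifying all triples \dots\ with $\nu_k=\nu_{k+2}$'', but the relation must also be checked when $\nu_k\ne\nu_{k+2}$: in that case the right-hand side vanishes automatically, yet the left-hand side $\psi_{k+1}\psi_k\psi_{k+1}-\psi_k\psi_{k+1}\psi_k$ does not vanish for free once the $\partial$-piece is in play. Concretely, if $(\nu_k,\nu_{k+1},\nu_{k+2})=(j,\ell,\ell)$ or $(\ell,\ell,j)$ with $j\ne\ell$ (so $k+1,k+2$ or $k,k+1$ occupy the special pair), one of $\psi_k,\psi_{k+1}$ acts via $\partial$ while the other acts by a swap or by zero, and one must check that the two braid words agree. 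They do---for instance both equal $\partial v\otimes s_{k+1}s_kT$ when $a_{j,\ell}=0$---but this is not automatic. Your ``most delicate sub-case'' does not cover these configurations; in fact, under the constraint $\nu_k=\nu_{k+2}$ the situation you describe (one of $k,k+1,k+2$ at a special position with the other two at a repeat-residue pair elsewhere) cannot occur, so that passage targets a vacuous case while missing the live one.

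The paper streamlines the whole argument by first isolating three combinatorial facts about $T\in\ST(\lambda(i))$: (i) $\nu_k=\nu_{k+1}$ forces both to equal $\ell$; (ii) $s_kT$ is standard \emph{iff} $a_{\nu_k,\nu_{k+1}}=0$; (iii) $\nu_k=\nu_{k+2}$ forces $(\nu_k,\nu_{k+1},\nu_{k+2})$ to be $(0,1,0)$ or $(\ell,j,\ell)$ with $a_{\ell,j}=0$. Fact (ii) is precisely your non-adjacency claim, stated biconditionally, and fact (iii) collapses your proposed enumeration of repeat-residue pairs (the row-$1$ pairs with residue ${}<\ell$ can never carry entries $k$ and $k+2$). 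With (i)--(iii) in hand, the braid check becomes: show $\psi_{k+1}\psi_k\psi_{k+1}=\psi_k\psi_{k+1}\psi_k$ on every $v\otimes T$, by separating the case where all swaps succeed from the six sub-cases where a pair of $\ell$'s occurs among $\nu_k,\nu_{k+1},\nu_{k+2}$; then invoke (iii) to see the difference-quotient right-hand side vanishes. Adding these six sub-cases to your outline closes the gap.
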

\begin{proof}
To check the defining relations on $v\otimes T$, we may assume that
$$\nu=(\nu_1, \nu_2, \ldots, \nu_{h-1})=\res(T).$$
Then, using the residue pattern $\eqref{Eq: residue pattern}$ and combinatorics of standard tableaux, we have
\begin{enumerate}
\item[(i)] If $\nu_k = \nu_{k+1}$, then $0\le i\le\ell-1$ and $\nu_k = \nu_{k+1}=\ell$.
\item[(ii)] $s_k T$ is standard if and only if $a_{\nu_k, \nu_{k+1}} = 0$.
\item[(iii)] If $\nu_k = \nu_{k+2}$, then $(\nu_k, \nu_{k+1},  \nu_{k+2})$ is either $(0,1,0)$ or $(\ell, j, \ell)$ for some $j\in I$ with $a_{\ell,j}=0$.
\end{enumerate}

In the rest of the proof, we only check the relations for $(\psi_k x_l - x_{s_k(l)} \psi_k)e(\nu)$, $\psi_k^2e(\nu)$ and
$(\psi_{k+1}\psi_{k}\psi_{k+1} - \psi_{k}\psi_{k+1}\psi_{k})e(\nu)$, because it is straightforward to check other relations.

Let us start with $(\psi_k x_l - x_{s_k(l)} \psi_k )e(\nu)(v\otimes T)$. Suppose that $\nu_k=\nu_{k+1}$. Then (i) implies that
$k$ and $k+1$ are located at the nodes $(1,\ell+1)$ and $(1,\ell+2)$ of $\lambda(i)$. If $l\ne k, k+1$ then
$$x_l(v\otimes T)=0, \quad x_{s_k(l)} \psi_k(v\otimes T)=x_l(\partial v\otimes T)=0.$$
Thus, $(\psi_k x_l - x_{s_k(l)} \psi_k )e(\nu)(v\otimes T)=0$ as desired. If $l=k$ then
$$(\psi_k x_k-x_{k+1} \psi_k)e(\nu)(v\otimes T)=\partial\xi_1v\otimes T-\xi_2\partial v\otimes T=-v\otimes T=-e(\nu)(v\otimes T).$$
If $l=k+1$, the similar computation shows $(\psi_k x_{k+1} -x_k \psi_k)e(\nu)(v\otimes T)=e(\nu)(v\otimes T)$.
Next we suppose that $\nu_k\ne\nu_{k+1}$ and prove $(\psi_k x_l -x_{s_k(l)} \psi_k)e(\nu)(v\otimes T)=0$.
If $s_k T$ is not standard then $\psi_k(\mathcal{N}\otimes T)=0$ implies the result. If $s_k T$ is standard, we define
$$\eta_l(T)= \# \{ 1 \le t \le l \mid \res_{t}(T) = \res_{l}(T) \}.$$
Then $\eta_l(T)$ is either $1$ or $2$ and we may write
$$ x_l (v\otimes T) = \delta_{\ell, \res_l(T)}  \xi_{\eta_l(T)} v \otimes  T. $$
Since $\res_{l}(T) = \res_{s_k(l)}(s_k T)$ holds and $\nu_k\ne\nu_{k+1}$ implies $\eta_{l}(T) = \eta_{s_k(l)}(s_k T)$,
we have
$$
(\psi_k x_l - x_{s_k(l)} \psi_k)e(\nu)(v\otimes T) =
\left(\delta_{\ell, \res_l(T)} \xi_{\eta_l(T)} - \delta_{\ell, \res_{s_k(l)}(s_kT)} \xi_{\eta_{s_k(l)}(s_k T)}\right)( v \otimes s_k T ) = 0.
$$

Next we prove the relation for $\psi_k^2e(\nu)$. If $s_k T$ is standard, then $a_{\nu_k, \nu_{k+1}}=0$ by (ii),
so that $\mathcal{Q}_{\nu_k,\nu_{k+1}}(x_k,x_{k+1})=1$ and
$\psi_k^2e(\nu)(v\otimes T)=\mathcal{Q}_{\nu_k,\nu_{k+1}}(x_k,x_{k+1})e(\nu)(v\otimes T)$ holds. If $s_k T$ is not standard, then
$\psi_k^2e(\nu)(v\otimes T)=0$ and we have to verify $\mathcal{Q}_{\nu_k,\nu_{k+1}}(x_k,x_{k+1})e(\nu)(v\otimes T)=0$.
If $\nu_k\ne\ell$ and $\nu_{k+1}\ne\ell$, it is clear. Since $s_k T$ is not standard, the remaining cases are
$(\nu_{k}, \nu_{k+1})=(\ell,\ell)$ or $(\nu_{k}, \nu_{k+1})=(\ell,\ell-1), (\ell-1,\ell)$. If $(\nu_{k}, \nu_{k+1})=(\ell,\ell)$
then $\mathcal{Q}_{\nu_k,\nu_{k+1}}(x_k,x_{k+1})=0$ and there is nothing to prove.
Suppose that $(\nu_{k}, \nu_{k+1})=(\ell-1,\ell)$. Then
$$ \mathcal{Q}_{\ell-1, \ell}(x_k, x_{k+1})e(\nu)(v\otimes T) = (x_k + x_{k+1}^2) (v\otimes T) = \xi_{\eta_{k+1}(T)}^2 v \otimes T = 0. $$
The proof for the case $(\nu_{k}, \nu_{k+1})=(\ell,\ell-1)$ is similar.

Finally, we consider $(\psi_{k+1}\psi_{k}\psi_{k+1} - \psi_{k}\psi_{k+1}\psi_{k})e(\nu)(v\otimes T)$. If
$\psi_{k+1}\psi_k\psi_{k+1}(v\otimes T)\ne 0$ then one of the following holds and the other does not hold.
\begin{itemize}
\item[(a)]
$s_{k+1}T, s_ks_{k+1}T, s_{k+1}s_ks_{k+1}T$ are all standard.
\item[(b)]
$\nu_k=\nu_{k+1}=\ell$ or $\nu_k=\nu_{k+2}=\ell$ or $\nu_{k+1}=\nu_{k+2}=\ell$.
\end{itemize}
Similarly, if $\psi_k\psi_{k+1}\psi_k(v\otimes T)\ne 0$ then one of the following holds and the other does not.
\begin{itemize}
\item[(a)]
$s_kT,s_{k+1}s_kT, s_ks_{k+1}s_kT$ are all standard.
\item[(b)]
$\nu_k=\nu_{k+1}=\ell$ or $\nu_k=\nu_{k+2}=\ell$ or $\nu_{k+1}=\nu_{k+2}=\ell$.
\end{itemize}
Note that $s_kT$, $s_{k+1} s_kT$, $s_k s_{k+1} s_kT$ are all standard if and only if $s_{k+1}T$, $s_k s_{k+1} T$, $s_{k+1} s_k s_{k+1} T$ are all standard.
Thus, there are two cases to consider. In case (a), we have
$$(\psi_{k+1}\psi_{k}\psi_{k+1} - \psi_{k}\psi_{k+1}\psi_{k})(v\otimes T)=0.$$
In case (b), $i\ne\ell$ and $(\nu_k, \nu_{k+1}, \nu_{k+2}) = (j,\ell,\ell), (\ell,j,\ell), (\ell,\ell,j)$, for some $j\in I$ with $j\ne\ell$.
Using (ii), we have the following.
\begin{itemize}
\item[(1)]
If $(\nu_k, \nu_{k+1}, \nu_{k+2}) = (j,\ell,\ell)$ and $a_{j,\ell}\ne0$ then
$$\psi_k(v\otimes T)=0, \quad \psi_k\psi_{k+1}(v\otimes T)=\psi_k(\partial v\otimes T)=0.$$
\item[(2)]
If $(\nu_k, \nu_{k+1}, \nu_{k+2}) = (j,\ell,\ell)$ and $a_{j,\ell}=0$ then
$$\psi_{k+1}\psi_k\psi_{k+1}(v\otimes T)=\partial v\otimes s_{k+1}s_k T=\psi_k\psi_{k+1}\psi_k(v\otimes T).$$
\item[(3)]
If $(\nu_k, \nu_{k+1}, \nu_{k+2}) = (\ell,j,\ell)$ and $a_{j,\ell}\ne0$ then
$$\psi_k(v\otimes T)=0, \quad \psi_{k+1}(v\otimes T)=0.$$
\item[(4)]
If $(\nu_k, \nu_{k+1}, \nu_{k+2}) = (\ell,j,\ell)$ and $a_{j,\ell}=0$ then
$$\psi_{k+1}\psi_k\psi_{k+1}(v\otimes T)=\partial v\otimes T=\psi_k\psi_{k+1}\psi_k(v\otimes T).$$
\item[(5)]
If $(\nu_k, \nu_{k+1}, \nu_{k+2}) = (\ell,\ell,j)$ and $a_{j,\ell}\ne0$ then
$$\psi_{k+1}(v\otimes T)=0, \quad \psi_{k+1}\psi_k(v\otimes T)=\psi_{k+1}(\partial v\otimes T)=0.$$
\item[(6)]
If $(\nu_k, \nu_{k+1}, \nu_{k+2}) = (\ell,\ell,j)$ and $a_{j,\ell}=0$ then
$$\psi_{k+1}\psi_k\psi_{k+1}(v\otimes T)=\partial v\otimes s_ks_{k+1}T=\psi_k\psi_{k+1}\psi_k(v\otimes T).$$
\end{itemize}
Therefore, we always have $(\psi_{k+1}\psi_{k}\psi_{k+1} - \psi_{k}\psi_{k+1}\psi_{k})e(\nu)(v\otimes T)=0$. Thus, it is enough to prove
that if $\nu_k=\nu_{k+2}$ then
$$ \frac{\mathcal{Q}_{\nu_k,\nu_{k+1}}(x_k,x_{k+1}) - \mathcal{Q}_{\nu_k,\nu_{k+1}}(x_{k+2},x_{k+1})}{x_k-x_{k+2}} (v\otimes T)=0.$$

Suppose that $\nu_k=\nu_{k+2}$. Then (iii) implies that $(\nu_k,\nu_{k+1},\nu_{k+2})=(0,1,0)$ or $(\ell,j,\ell)$ with $a_{\ell,j}=0$.
If $(\nu_k, \nu_{k+1}, \nu_{k+2}) = (0,1,0)$, then
$$\frac{\mathcal{Q}_{0, 1}(x_k,x_{k+1}) - \mathcal{Q}_{0, 1}(x_{k+2},x_{k+1})}{x_k-x_{k+2}}(v\otimes T) = (x_k + x_{k+2}) (v\otimes T) = 0.$$
If $(\nu_k, \nu_{k+1}, \nu_{k+2}) = (\ell, j, \ell)$ with $a_{\ell,j}=0$, then $\mathcal{Q}_{\ell,j}(u,v)=1$ and
$$ \frac{\mathcal{Q}_{\ell,j}(x_k,x_{k+1}) - \mathcal{Q}_{\ell,j}(x_{k+2},x_{k+1})}{x_k-x_{k+2}} (v\otimes T)=0.$$
We have completed the proof.
\end{proof}

\begin{cor} \label{nonsplit seq}
Define
$$\widetilde{\mathcal{L}}_\ell = \bigoplus_{T\in\ST(\lambda(\ell))} \bR (\begin{smallmatrix} 1 \\ 0 \end{smallmatrix})\otimes T.$$

Then $\widetilde{\mathcal{L}}_\ell$ is an $\fqH(2\delta-\alpha_\ell)$-submodule of $\mathcal{L}_\ell$ and we have a nonsplit exact sequence
$$  0 \longrightarrow \widetilde{\mathcal{L}}_\ell \longrightarrow \mathcal{L}_\ell \longrightarrow \widetilde{\mathcal{L}}_\ell \longrightarrow 0. $$
\end{cor}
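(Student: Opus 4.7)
The plan is to verify the statement in three steps: show $\widetilde{\mathcal{L}}_\ell$ is closed under the algebra action, identify the quotient $\mathcal{L}_\ell/\widetilde{\mathcal{L}}_\ell$ with $\widetilde{\mathcal{L}}_\ell$, and then rule out splittings by exhibiting an $x_k$ that acts nontrivially on $\mathcal{L}_\ell$.

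For the first step, I would read off the formulas in \eqref{reducible} directly. Since $\xi_1(\begin{smallmatrix} 1 \\ 0 \end{smallmatrix}) = 0$, every $x_k$ annihilates $\widetilde{\mathcal{L}}_\ell$; each $e(\nu)$ preserves each summand $\bR(\begin{smallmatrix} 1 \\ 0 \end{smallmatrix})\otimes T$; and $\psi_k$ sends $(\begin{smallmatrix} 1 \\ 0 \end{smallmatrix})\otimes T$ to $(\begin{smallmatrix} 1 \\ 0 \end{smallmatrix})\otimes s_k T$ when $s_k T$ is standard and to $0$ otherwise. Hence $\widetilde{\mathcal{L}}_\ell$ is an $\fqH(2\delta-\alpha_\ell)$-submodule.

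For the quotient, the cosets $\overline{(\begin{smallmatrix} 0 \\ 1 \end{smallmatrix})\otimes T}$ form a basis of $\mathcal{L}_\ell/\widetilde{\mathcal{L}}_\ell$. The induced action of $x_k$ is zero because $\xi_1(\begin{smallmatrix} 0 \\ 1 \end{smallmatrix}) = (\begin{smallmatrix} 1 \\ 0 \end{smallmatrix})$ lies in $\widetilde{\mathcal{L}}_\ell$, while $e(\nu)$ and $\psi_k$ permute the cosets by exactly the same rules as on $\widetilde{\mathcal{L}}_\ell$. The $\bR$-linear bijection $\overline{(\begin{smallmatrix} 0 \\ 1 \end{smallmatrix})\otimes T} \mapsto (\begin{smallmatrix} 1 \\ 0 \end{smallmatrix})\otimes T$ is therefore an isomorphism of $\fqH(2\delta-\alpha_\ell)$-modules, which yields the short exact sequence.

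The main point is non-splitness. If the sequence split, then $\mathcal{L}_\ell \cong \widetilde{\mathcal{L}}_\ell \oplus \widetilde{\mathcal{L}}_\ell$ as modules, and since every $x_k$ acts as zero on $\widetilde{\mathcal{L}}_\ell$, the same would have to hold on $\mathcal{L}_\ell$. This fails: in the canonical tableau $T^{\lambda(\ell)}$ with $\lambda(\ell)=(\ell+1,\ell)$, the $(1,\ell+1)$-entry equals $\ell+1$, so the formulas in \eqref{reducible} give
$$ x_{\ell+1}\bigl((\begin{smallmatrix} 0 \\ 1 \end{smallmatrix})\otimes T^{\lambda(\ell)}\bigr) = \xi_1(\begin{smallmatrix} 0 \\ 1 \end{smallmatrix})\otimes T^{\lambda(\ell)} = (\begin{smallmatrix} 1 \\ 0 \end{smallmatrix})\otimes T^{\lambda(\ell)} \neq 0, $$
contradicting the splitting hypothesis. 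This establishes non-splitness.
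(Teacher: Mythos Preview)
Your proof is correct and follows essentially the same approach as the paper: the paper declares the submodule and exact-sequence claims ``clear by \eqref{reducible}'' and proves non-splitness by the identical observation that every $x_k$ kills $\widetilde{\mathcal{L}}_\ell$ while some $x_k$ acts nontrivially on $\mathcal{L}_\ell$. You have simply unpacked the ``clear'' parts and made the nonzero $x_k$-action explicit via the canonical tableau.
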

\begin{proof}
We prove that the exact sequence is nonsplit. The other parts are clear by \eqref{reducible}.
By definition, all $x_k$ act as $0$ on $\widetilde{\mathcal{L}}_\ell$.
Thus, $\mathcal{L}_\ell\not\simeq \widetilde{\mathcal{L}}_\ell \oplus \widetilde{\mathcal{L}}_\ell$
because the action of $x_k$ is nonzero on $\mathcal{L}_\ell$.
\end{proof}

\begin{lemma} \label{Lem: L_i}
\begin{enumerate}
\item The modules $\mathcal{L}_i$, for $0\le i\le\ell-1$, and $\widetilde{\mathcal{L}}_\ell$ are irreducible.
\item
For $0\le i,j \le\ell-1$, we have $ E_j\mathcal{L}_i =0 $ if and only if $j \ne i \pm 1$. Moreover,
$$ E_i\mathcal{L}_j \simeq E_j\mathcal{L}_i $$
and they are irreducible if they are nonzero.
\item $E_{i} \widetilde{\mathcal{L}}_\ell =0$ if and only if $i \ne \ell-1$, and $E_{\ell-1} \widetilde{\mathcal{L}}_\ell$ is irreducible.
\item $ E_{\ell-1} \mathcal{L}_\ell $ is isomorphic to $ E_{\ell} \mathcal{L}_{\ell-1} $, and there is a nonsplit exact sequence:
$$  0 \longrightarrow E_{\ell-1} \widetilde{\mathcal{L}}_\ell \longrightarrow E_{\ell} \mathcal{L}_{\ell-1} \longrightarrow E_{\ell-1} \widetilde{\mathcal{L}}_\ell \longrightarrow 0. $$
\end{enumerate}
\end{lemma}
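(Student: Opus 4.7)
The plan is to prove parts~(1), (2), (3) by a common cyclicity-with-isolation argument and to obtain part~(4) by applying the exact functor $E_{\ell-1}$ to the nonsplit short exact sequence of Corollary~\ref{nonsplit seq}. For part~(1) I will show that any nonzero submodule $M\subseteq\mathcal{L}_i$ (with $0\le i\le\ell-1$) equals $\mathcal{L}_i$. The key inputs are that each $T\in\ST(\lambda(i))$ is uniquely determined by its residue sequence $\res(T)$, so that a suitable $e(\nu)$ applied to a nonzero element of $M$ isolates a nonzero vector $ae_1\otimes T+be_2\otimes T$; acting by $x_k$ for the index $k$ sitting at $(1,\ell+1)$ of $T$ produces $\xi_1(ae_1+be_2)\otimes T=be_1\otimes T$, so we can always reach $e_1\otimes T$ up to scalar; the connectivity of $\ST(\lambda(i))$ under those simple transpositions $s_k$ for which $s_kT$ is standard then propagates $e_1\otimes T$ to $e_1\otimes T'$ for every $T'$; and finally, applying $\psi_{\ell+1}$ to $e_1\otimes T^{\lambda(i)}$, where $\ell+1$ and $\ell+2$ occupy the two distinguished positions, invokes the $\partial$-action and yields $e_2\otimes T^{\lambda(i)}$, from which the remaining $e_2\otimes T'$ are reached by the same transpositions. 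For $\widetilde{\mathcal{L}}_\ell$ the identical argument succeeds more easily since the $\mathcal{N}$-direction has already been collapsed to $\bR e_1$.

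For part~(2) the vanishing statement $E_j\mathcal{L}_i=0\Leftrightarrow j\ne i\pm 1$ comes from a direct inspection of the removable corners of $\lambda(i)=(h-i-1,i)$: the corner $(1,h-i-1)$ carries residue $i+1$ and (when $i\ge 1$) the corner $(2,i+1)$ carries residue $i-1$, so $\res_{h-1}(T)\in\{i-1,i+1\}\cap I$. For the isomorphism $E_i\mathcal{L}_j\simeq E_j\mathcal{L}_i$, taking $j=i+1$ (the case $j=i-1$ being symmetric), I will identify both sides with the explicit $\fqH(2\delta-\alpha_i-\alpha_{i+1})$-module $\bigoplus_{T'\in\ST(h-i-2,i)}\mathcal{N}\otimes T'$ obtained by deleting the box holding $h-1$. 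Because $\fqH(h-2)$ is generated only by $\psi_k$ with $k\le h-3$, and because the two distinguished positions $(1,\ell+1),(1,\ell+2)$ still lie inside the common truncated shape $(h-i-2,i)$, the induced $e(\nu')$, $x_k$ and $\psi_k$ actions match term by term; irreducibility is then the cyclicity argument of~(1) transplanted to $\ST(h-i-2,i)$. Part~(3) follows by the same template, with the decisive combinatorial point being that $(1,\ell+1)$ is \emph{not} a removable corner of $\lambda(\ell)=(\ell+1,\ell)$, since its removal would leave $(\ell,\ell)$, which violates the strict inequality required for a shifted partition; only $(2,\ell+1)$, of residue $\ell-1$, survives, and irreducibility of $E_{\ell-1}\widetilde{\mathcal{L}}_\ell$ is again given by the cyclicity argument.

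For part~(4) I will first identify $E_\ell\mathcal{L}_{\ell-1}\simeq E_{\ell-1}\mathcal{L}_\ell$ explicitly on $\bigoplus_{T'\in\ST(\ell+1,\ell-1)}\mathcal{N}\otimes T'$: on both sides the box carrying $h-1$ sits outside the $\partial$-window---either $(1,\ell+2)$ does not exist in $\lambda(\ell)$ at all, or $(1,\ell+2)$ in $\lambda(\ell-1)$ is occupied by $h-1$ itself---so after restriction to the generators $\psi_k$ with $k\le h-3$ the surviving actions reduce on both sides to $x_k=\xi_1$ precisely when $k$ sits at $(1,\ell+1)$ of $T'$ (and $0$ otherwise), and $\psi_k=$ standard transposition or $0$. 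Applying the exact functor $E_{\ell-1}$, which is exact because it is multiplication by the idempotent $e(\,\cdot\,,\ell-1)$, to the nonsplit sequence of Corollary~\ref{nonsplit seq} then yields the asserted short exact sequence, and nonsplitness is preserved since a splitting would force $x_k$ to act as zero on the entire middle term (inherited from its trivial action on $E_{\ell-1}\widetilde{\mathcal{L}}_\ell$), contradicting the explicit nonzero action $x_k(e_2\otimes T')=e_1\otimes T'$. The main technical obstacle, in my view, is the bookkeeping in parts~(2) and~(4): matching actions between two explicit descriptions of the same underlying space requires carefully tracking the two distinguished positions $(1,\ell+1)$ and $(1,\ell+2)$ shape by shape after the removal of the box holding $h-1$, together with verifying that the various standard-tableau graphs remain connected under the relevant simple transpositions.
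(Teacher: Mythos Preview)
Your proof is correct and, for parts~(1), (3), and~(4), follows essentially the same route as the paper: a cyclicity argument using the decomposition by idempotents $e(\res(T))$ together with connectivity of $\ST(\lambda)$ under standard-preserving transpositions, and for~(4) applying the exact functor $E_{\ell-1}$ to Corollary~\ref{nonsplit seq}. Your hands-on irreducibility argument in~(1) (reaching $e_1\otimes T$, then $e_2\otimes T$ via one $\partial$-step) is the unwound version of the paper's appeal to the $NH_2^2$-irreducibility of~$\mathcal{N}$; your nonsplitness argument in~(4) via the nonvanishing $x_k$-action is equivalent to the paper's observation that $E_\ell\mathcal{L}_{\ell-1}$ is not homogeneous while $E_{\ell-1}\widetilde{\mathcal{L}}_\ell$ is.

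The one genuine difference is in part~(2). The paper does not build an explicit isomorphism: instead it computes
\[
\mathrm{ch}_q \mathcal{L}_i = (1+q^2)\sum_{T\in\ST(\lambda(i))}\res(T),
\]
reads off immediately that $\mathrm{ch}_q(E_j\mathcal{L}_i)=\mathrm{ch}_q(E_i\mathcal{L}_j)$, and concludes isomorphism from the general fact that irreducible modules with equal $q$-characters are isomorphic. Your explicit identification of both sides with $\bigoplus_{T'\in\ST(h-i-2,i)}\mathcal{N}\otimes T'$, checking that the distinguished positions $(1,\ell+1),(1,\ell+2)$ survive and that the $x_k,\psi_k$ formulas coincide, is correct but carries more bookkeeping. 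The character approach is quicker and avoids the shape-by-shape tracking you flag as the main obstacle; on the other hand, your approach gives the isomorphism concretely rather than through an existence principle.
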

\begin{proof}
We show that $\mathcal{L}_i$, for $0\le i\le\ell-1$, are irreducible. Let $W$ be a nonzero $\fqH(2\delta-\alpha_i)$-submodule of $\mathcal{L}_i$. Then
$$ W=\bigoplus_{T\in\ST(\lambda(i))} e(\res(T))W $$
and $e(\res(T))W\ne 0$, for some $T\in\ST(\lambda(i))$.
Noting that $\mathcal{N}$ is irreducible as a $NH_2^2$-module, we may assume that $0\ne\mathcal{N}\otimes T\subseteq W$, for some $T\in\ST(\lambda(i))$.
To conclude that $W=\mathcal{L}_i$, it suffices to show the following: for any $T\in\ST(\lambda(i))$,
there is a sequence $T_0,T_1,\dots$ of standard tableaux which starts at $T$ and
ends at the canonical tableau such that $T_{k+1}=s_{i_k}T_k$ for some $i_k$.
Let $S$ be a standard tableau and suppose that $1,2,\dots,a-1$ are on the first row,
$a,a+1,\dots,b$ are on the second row and $b+1$ is on the first row again. Then
we swap $b$ and $b+1$. Repeating the procedure, we reach the stage $a=b$ and when we swap $b$ and $b+1$,
we obtain a standard tableau such that $1,2,\dots,a$ are on the first row and $a+1$ is on the second row.
Hence, repeated use of the procedure starting with $T$ gives the desired sequence of standard tableaux.

In the same manner, we may prove that $\widetilde{\mathcal{L}}_\ell$ and $E_i\mathcal{L}_j$, for $j=i\pm 1$, are irreducible.
In particular, we have proved (1).

Suppose that $0\le i, j\le \ell-1$. By the definition of the module structure $\eqref{Eq: Li}$, we have
$$ \mathrm{ch}_q \mathcal{L}_i = (1+q^2) \sum_{T \in \ST(\lambda(i))} \res(T). $$
Thus, $\mathrm{ch}_q(E_j\mathcal{L}_i)=\mathrm{ch}_q(E_i\mathcal{L}_j)\ne0$ if $j=i\pm 1$ and $\mathrm{ch}_q(E_j\mathcal{L}_i)=0$ otherwise.
Since $E_i\mathcal{L}_j$ are irreducible if they are nonzero, (2) follows. The proof of (3) is similar.

It is clear from the definition of the modules that $E_{\ell-1}\mathcal{L}_\ell\simeq E_\ell\mathcal{L}_{\ell-1}$. Applying the exact functor
$E_{\ell-1}$ to the exact sequence from Corollary \ref{nonsplit seq}, we obtain the exact sequence
$$  0 \longrightarrow E_{\ell-1} \widetilde{\mathcal{L}}_\ell \longrightarrow E_{\ell} \mathcal{L}_{\ell-1} \longrightarrow E_{\ell-1} \widetilde{\mathcal{L}}_\ell \longrightarrow 0. $$
It is nonsplit because $E_{\ell} \mathcal{L}_{\ell-1}$ is not homogeneous while $E_{\ell-1} \widetilde{\mathcal{L}}_\ell$ is homogeneous.
\end{proof}

\begin{Rmk}
The irreducible modules $\mathcal{L}_0$, $\mathcal{L}_1$, $\ldots$, $\mathcal{L}_{\ell-1}$, $\widetilde{\mathcal{L}}_{\ell}$ correspond
to the Young walls $Y_0$, $Y_1$, $\ldots$, $Y_{\ell-1}$, $Y_\ell$ given in $\eqref{Eq: Young walls Yi}$
via the crystal isomorphism from Theorem \ref{Thm: crystals}. Hence,
some parts of Lemma \ref{Lem: L_i} may be proved by using properties $\eqref{Eq: properties of Yi}$ of the Young walls $Y_{i}$.
\end{Rmk}

\begin{prop} \label{Prop: repn type of R(2delta-alpha)}
The irreducible $\fqH(2\delta-\alpha_i)$-module is unique, for all $i\in I$, and we have the following
structure theorem.
\begin{enumerate}
\item For $0\le i\le\ell-1$, we have an algebra isomorphism $\fqH(2\delta-\alpha_i)\simeq \Mat(\dim \mathcal{L}_i, \bR)$,
where $\mathcal{L}_i$ is the unique irreducible $\fqH(2\delta-\alpha_i)$-module defined in $\eqref{Eq: Li}$ and
$$
\dim \mathcal{L}_i=2\binom{h-2}{i}-2\binom{h-2}{i-1}.
$$
\item We have an algebra isomorphism $\fqH(2\delta-\alpha_\ell)\simeq\Mat(\dim \widetilde{\mathcal{L}}_\ell, \bR[x]/(x^2))$,
where $\widetilde{\mathcal{L}}_\ell$ is the unique irreducible $\fqH(2\delta-\alpha_\ell)$-module defined in Corollary \ref{nonsplit seq} and
$$
\dim \widetilde{\mathcal{L}}_\ell=\binom{h-2}{\ell}-\binom{h-2}{\ell-1}.
$$
Furthermore, $\mathcal{L}_\ell$ is the unique indecomposable projective $\fqH(2\delta-\alpha_\ell)$-module.
\end{enumerate}
\end{prop}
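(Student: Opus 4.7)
The plan is to first establish uniqueness of the irreducible $\fqH(2\delta-\alpha_i)$-module for every $i\in I$, and then to treat the two parts by whether $\Lambda_0-2\delta+\alpha_i$ is extremal. Uniqueness: combining Theorem~\ref{Thm: crystals} with fact (i) of \S\ref{Sec: Young walls}, $|\sB(\Lambda_0)_{\Lambda_0-(2\delta-\alpha_i)}| = |\mathcal{Y}(\Lambda_0)_{\Lambda_0-2\delta+\alpha_i}| = 1$ for every $i\in I$, so $\fqH(2\delta-\alpha_i)$ has exactly one simple module, which by Lemma~\ref{Lem: L_i}(1) must be $\mathcal{L}_i$ for $0\le i\le\ell-1$ and $\widetilde{\mathcal{L}}_\ell$ for $i=\ell$.

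For part~(1), Lemma~\ref{extremal weights} supplies $w\in\weyl$ with $\Lambda_0-w\Lambda_0 = 2\delta-\alpha_i$, so Proposition~\ref{Prop: repn type} (with $k=0$) forces $\fqH(2\delta-\alpha_i)$ to share the representation type of $\fqH(0)\simeq \bR$, namely to be semisimple. A semisimple algebra with a single simple module of dimension $\dim\mathcal{L}_i$ is $\Mat(\dim\mathcal{L}_i,\bR)$. The explicit binomial expression follows from $\dim\mathcal{L}_i = 2|\ST(\lambda(i))|$ (immediate from the construction~$\eqref{Eq: Li}$) together with the standard two-row shifted-tableau count $|\ST((a,b))|=\binom{a+b-1}{b}-\binom{a+b-1}{b-1}$ applied to $(a,b)=(h-i-1,i)$.

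For part~(2), the algebra is genuinely non-semisimple: Corollary~\ref{nonsplit seq} exhibits $\mathcal{L}_\ell$ as an indecomposable length-two module with $\Top\simeq\Soc\simeq\widetilde{\mathcal{L}}_\ell$. By self-injectivity (Proposition~\ref{Prop: self-injective}), the projective cover $P$ of $\widetilde{\mathcal{L}}_\ell$ is also its injective envelope, satisfies $\Top P\simeq\Soc P\simeq\widetilde{\mathcal{L}}_\ell$, and yields $\fqH(2\delta-\alpha_\ell)\simeq P^{\oplus\dim\widetilde{\mathcal{L}}_\ell}$ as a left module over itself. To identify $P$ with $\mathcal{L}_\ell$, I would compute $\dim\fqH(2\delta-\alpha_\ell)$ via Theorem~\ref{Thm: dimension formula}: the unique shifted Young diagram of weight $\Lambda_0-2\delta+\alpha_\ell$ is $\lambda(\ell)=(\ell+1,\ell)$, since the Fock-space realization $\F\simeq V(\Lambda_0)$ identifies shifted Young diagrams with a weight basis and $\dim V(\Lambda_0)_{\Lambda_0-2\delta+\alpha_\ell} = |\mathcal{Y}(\Lambda_0)_{\Lambda_0-2\delta+\alpha_\ell}|=1$. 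Direct computation gives $\langle\mathsf{d},\wt(\lambda(\ell))\rangle=-3$ and $l(\lambda(\ell))=2$, hence $\dim\fqH(2\delta-\alpha_\ell)=2|\ST(\lambda(\ell))|^2$ and therefore $\dim P = 2|\ST(\lambda(\ell))| = \dim\mathcal{L}_\ell$. The surjection $P\twoheadrightarrow\mathcal{L}_\ell$ induced by $\Top(\mathcal{L}_\ell)\simeq\widetilde{\mathcal{L}}_\ell$ is then an isomorphism by dimension, so $\mathcal{L}_\ell$ is the unique indecomposable projective. Finally, $\End(\mathcal{L}_\ell)$ is a $2$-dimensional local $\bR$-algebra and hence $\simeq\bR[x]/(x^2)$, so $\fqH(2\delta-\alpha_\ell)\simeq\End(\mathcal{L}_\ell^{\oplus\dim\widetilde{\mathcal{L}}_\ell})^{\mathrm{op}}\simeq\Mat(\dim\widetilde{\mathcal{L}}_\ell,\bR[x]/(x^2))$. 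The main obstacle is the dimension bookkeeping in this last step — pinning down $\lambda(\ell)$ as the unique shifted Young diagram of its weight via the Fock-space identification, and carefully tracking the prefactor $2^{-\langle\mathsf{d},\wt(\lambda)\rangle-l(\lambda)}$; once $\dim P=\dim\mathcal{L}_\ell$ is in hand, self-injectivity and the unique simple force everything else.
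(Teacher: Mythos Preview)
Your proof is correct and follows essentially the same route as the paper. The one noteworthy difference is how you establish uniqueness of the simple module: you do it uniformly for all $i\in I$ via the crystal isomorphism of Theorem~\ref{Thm: crystals} together with the Young-wall count (fact~(i) of \S\ref{Sec: Young walls}), whereas the paper argues case by case --- for $0\le i\le\ell-1$ it deduces uniqueness from simplicity of the algebra (Lemma~\ref{extremal weights} plus Proposition~\ref{Prop: repn type} with $k=0$), and for $i=\ell$ it observes $\Lambda_0-2\delta+\alpha_\ell=r_{\ell-1}\cdots r_1r_0(\Lambda_0-\delta)$ and invokes Proposition~\ref{Prop: repn type} with $k=1$ to compare with $\fqH(\delta)$. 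The remaining steps --- evaluating $\dim\fqH(2\delta-\alpha_\ell)$ via Theorem~\ref{Thm: dimension formula} and the uniqueness of $\lambda(\ell)$, identifying the projective cover with $\mathcal{L}_\ell$ by dimension, and computing $\End(\mathcal{L}_\ell)\simeq\bR[x]/(x^2)$ --- match the paper's argument.
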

\begin{proof}
It follows from Lemma \ref{extremal weights} and Proposition \ref{Prop: repn type} that $\fqH(2\delta-\alpha_i)$ is a simple algebra,
for $0\le i\le\ell-1$. Then $\mathcal{L}_i$ is the unique irreducible $\fqH(2\delta-\alpha_i)$-module by Lemma \ref{Lem: L_i}(1).
Thus, $\fqH(2\delta-\alpha_i)\simeq\End_\bR(\mathcal{L}_i)$ and the hook length formula for the number of elements in $\ST(\lambda(i))$
gives the formula for $\dim \mathcal{L}_i$. Hence (1) follows. To prove (2), observe that
$$r_{\ell-1} \cdots r_{1} r_{0} (\Lambda_0-\delta) = r_{\ell-1} \cdots r_{1} r_{0} (\Lambda_0)-\delta = \Lambda_0-2\delta+\alpha_{\ell}.$$
Then, $\fqH(2\delta-\alpha_\ell)$ and $\fqH(\delta)$ have the same number of simple modules
by Proposition \ref{Prop: repn type}. As $\fqH(\delta)\simeq \bR[x]/(x^2)$, $\widetilde{\mathcal{L}}_\ell$ is the unique
$\fqH(2\delta-\alpha_\ell)$-module by Lemma \ref{Lem: L_i}(1) again. Let $P_\ell$ be the indecomposable projective
$\fqH(2\delta-\alpha_\ell)$-module. Since
$$\dim \fqH(2\delta-\alpha_\ell)=(\dim P_\ell)(\dim \widetilde{\mathcal{L}}_\ell),$$
and Theorem \ref{Thm: dimension formula} shows that $\dim \fqH(2\delta-\alpha_\ell)=2(\dim \widetilde{\mathcal{L}}_\ell)^2$,
$P_\ell$ is the uniserial module of length $2$. As Corollary \ref{nonsplit seq} shows that we have a surjective
homomorphism $P_\ell\to\mathcal{L}_\ell$, we have $P_\ell\simeq\mathcal{L}_\ell$. Hence $\mathcal{L}_\ell$ is the unique
indecomposable projective $\fqH(2\delta-\alpha_\ell)$-module.

To finish the proof, we observe that the regular representation gives an algebra isomorphism
$$\fqH(2\delta-\alpha_\ell)\simeq \End_{\fqH(2\delta-\alpha_\ell)}(\mathcal{L}_\ell^{\oplus \dim\widetilde{\mathcal{L}}_\ell})^{\rm op}
\simeq\Mat(\dim\widetilde{\mathcal{L}}_\ell, \End_{\fqH(2\delta-\alpha_\ell)}(\mathcal{L}_\ell)^{\rm op})$$
and $\End_{\fqH(2\delta-\alpha_\ell)}(\mathcal{L}_\ell)\simeq \bR[x]/(x^2)$. Thus,
$\fqH(2\delta-\alpha_\ell)\simeq\Mat(\dim \widetilde{\mathcal{L}}_\ell, \bR[x]/(x^2))$.
\end{proof}

\subsection{Representations of $\fqH(2\delta)$} \label{Section: repn of R(2delta)}
In this section, we give explicit constructions of various $\fqH(2\delta)$-modules
using results from Section \ref{Section: repn of R(2delta-alpha)}.

\begin{lemma} \label{S for i}
By declaring that $x_{h}$ and $\psi_{h-1}$ act as $0$, and $e(\nu)$, for $\nu\in I^{2\delta}$, as
$$ e(\nu) (v\otimes T) = \left\{
                           \begin{array}{ll}
                             v \otimes T & \hbox{ if } \nu = \res(T) * i, \\
                             0 & \hbox{ otherwise,}
                           \end{array}
                         \right.
$$
where $\res(T) * i$ is the concatenation of $\res(T)$ and $(i)$, the irreducible $\fqH(2\delta-\alpha_i)$-module
$\mathcal{L}_i$ extends to an irreducible $\fqH(2\delta)$-module, for $0\le i\le\ell-1$.
\end{lemma}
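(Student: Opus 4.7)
The plan is to verify the defining relations of $\fqH(2\delta)$ for the proposed extended action, and then to deduce irreducibility from the irreducibility of $\mathcal{L}_i$ as an $\fqH(2\delta-\alpha_i)$-module. The actions of $e(\nu'*i)$ for $\nu'\in I^{h-1}$, of $x_k$ for $1\le k\le h-1$, and of $\psi_l$ for $1\le l\le h-2$ all agree with the $\fqH(2\delta-\alpha_i)$-action we already have on $\mathcal{L}_i$, so every relation of $\fqH(2\delta)$ not involving $x_h$ or $\psi_{h-1}$ is automatic. I would only need to verify the new relations that do involve them.

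Many such relations are trivial because both sides vanish identically: $x_he(\nu)=e(\nu)x_h$, $x_hx_l=x_lx_h$, $\psi_{h-1}e(\nu)=e(s_{h-1}\nu)\psi_{h-1}$, and the locality relation $\psi_k\psi_{h-1}=\psi_{h-1}\psi_k$ for $k\le h-3$. The nontrivial ones are the three whose right-hand sides are not manifestly zero: the cross relation $(\psi_{h-1}x_l-x_{s_{h-1}(l)}\psi_{h-1})e(\nu)$, the quadratic relation $\psi_{h-1}^2e(\nu)=\mathcal{Q}_{\nu_{h-1},i}(x_{h-1},0)e(\nu)$, and the braid relation $(\psi_{h-1}\psi_{h-2}\psi_{h-1}-\psi_{h-2}\psi_{h-1}\psi_{h-2})e(\nu)$. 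Each requires showing that the right-hand side annihilates $e(\nu)\mathcal{L}_i$ whenever the residue condition $\nu_{h-1}=i$ or $\nu_{h-2}=i$ arises, recalling that $\nu_h=i$ throughout.

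The crucial input is a case analysis of which residues can occupy positions $h-1$ and $h-2$ of $\res(T)$ for $T\in\ST(\lambda(i))$. The entries $h-2, h-1$ of $T$ fill the corner boxes of $\lambda(i)=(h-i-1,i)$ in some order, which by $\eqref{Eq: lambda i}$ and the residue pattern $\eqref{Eq: residue pattern}$ lie at $(1,h-i-1)$ of residue $i+1$ and, if $i\ge 1$, at $(2,i+1)$ of residue $i-1$; the boxes one step inward carry residues in $\{i+2,i-2,\ell,2\}$ depending on $i$. In particular neither $\nu_{h-1}$ nor $\nu_{h-2}$ equals $i$, which settles the cross relation and the braid relation. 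For the quadratic relation, $\mathcal{Q}_{j,i}(u,0)$ is a monomial in $u$ for $j=i\pm 1$, and one checks that $x_{h-1}$ acts on the relevant summand either as zero (when the entry $h-1$ sits outside the two central columns $(1,\ell+1),(1,\ell+2)$ of the top row) or as $\xi_2$ (only when $i=\ell-1$ and $h-1$ is at $(1,\ell+2)$). In the latter case $\mathcal{Q}_{\ell,\ell-1}(x_{h-1},0)=x_{h-1}^2=\xi_2^2=0$, so the relation still holds.

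Irreducibility then follows because any nonzero $\fqH(2\delta)$-submodule of $\mathcal{L}_i$ is automatically stable under the operators that realise the $\fqH(2\delta-\alpha_i)$-action on $\mathcal{L}_i$, and this module is irreducible by Lemma \ref{Lem: L_i}(1). The main obstacle is the residue bookkeeping in the previous paragraph, in particular the boundary case $i=\ell-1$ where the quadratic term in $\mathcal{Q}_{\ell-1,\ell}$ meets the nilpotence $\xi_2^2=0$ of the $NH_2^2$-action to rescue the quadratic relation; elsewhere the verifications are routine once the residues of the corner boxes have been identified.
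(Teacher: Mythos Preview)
Your proposal is correct and follows essentially the same approach as the paper's proof: both reduce to checking the relations involving $x_h$ and $\psi_{h-1}$, both use the residue pattern of $\lambda(i)$ to see that $\nu_{h-1}\in\{i-1,i+1\}$ and $\nu_{h-2}\ne i$ (dispensing with the cross and braid relations), and both isolate the boundary case $\nu_{h-1}=\ell$ (forcing $i=\ell-1$) where the quadratic relation is rescued by $\xi_2^2=0$. Your explicit sentence on irreducibility, deducing it from Lemma~\ref{Lem: L_i}(1) via the observation that an $\fqH(2\delta)$-submodule is automatically an $\fqH(2\delta-\alpha_i)$-submodule, is a point the paper leaves implicit.
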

\begin{proof}
We may assume that $\nu = \res(T)*i$. Then $\eqref{Eq: lambda i}$ tells
$$ \nu_{h-2} \ne i=\nu_h \quad \text{ and } \quad \nu_{h-1} = i \pm 1\ne \nu_h.  $$
Thus, the relations for
$(\psi_{h-1}\psi_{h-2}\psi_{h-1}-\psi_{h-2}\psi_{h-1}\psi_{h-2})e(\nu)$ and
$(\psi_{h-1}x_l-x_{s_k(l)}\psi_{h-1})e(\nu)$ hold. If $\nu_{h-1}\ne \ell$, then
$a_{\nu_{h-1}, \nu_h}=a_{i\pm 1,i}\ne 0$ and $x_{h-1}(v\otimes T)=0$ imply that
$$ \mathcal{Q}_{\nu_{h-1},\nu_h}(x_{h-1},x_h)(v\otimes T)=0. $$
If $\nu_{h-1}= \ell$, then $a_{\nu_{h-1}, \nu_h}\ne0$ forces $\nu_h=\ell-1$, and
$\mathcal{Q}_{\nu_{h-1},\nu_h}(x_{h-1},x_h)(v\otimes T)$ is equal to
$$ \mathcal{Q}_{\ell,\ell-1}(x_{h-1},x_h)(v\otimes T)=(x_{h-1}^2+x_h)(v\otimes T)=\xi_2^2v\otimes T=0. $$
Thus, the relation for $\psi_{h-1}^2e(\nu)$ holds.
It is easy to check the remaining defining relations.
\end{proof}

\begin{defn}
We denote the irreducible $\fqH(2\delta)$-module defined in Lemma \ref{S for i} by $\mathcal{S}_i$,
for $i=0, 1, \dots, \ell-1$.
\end{defn}

\begin{lemma} \label{S for ell}
By declaring that $\psi_{h-1}$ act as $0$, $x_h$ and $e(\nu)$, for $\nu\in I^{2\delta}$, as
\begin{align*}
x_{h} (v\otimes T) = \xi_2 v \otimes T, \qquad
e(\nu) (v\otimes T) = \left\{
                           \begin{array}{ll}
                             v \otimes T & \hbox{ if } \nu = \res(T) * \ell, \\
                             0 & \hbox{ otherwise,}
                           \end{array}
                         \right.
\end{align*}
$\mathcal{L}_\ell$ extends to a $\fqH(2\delta)$-module. Further, the $\fqH(2\delta-\alpha_\ell)$-submodule
$\widetilde{\mathcal{L}}_\ell$ is stable under the action of $x_h$ and $e(\nu)$, for $\nu\in I^{2\delta}$.
\end{lemma}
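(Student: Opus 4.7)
The plan is to verify the defining relations of $\fqH(2\delta)$ on $\mathcal{L}_\ell$ with the newly prescribed actions, following the same pattern as Lemma~\ref{S for i}. All relations that do not involve $x_h$, $\psi_{h-1}$, or the new idempotents $e(\nu)$ for $\nu \in I^{2\delta}$ are inherited from the $\fqH(2\delta-\alpha_\ell)$-module structure on $\mathcal{L}_\ell$ built in Section~\ref{Section: repn of R(2delta-alpha)}, so the task reduces to checking the remaining ones.

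The key combinatorial observation I would make first is that the shifted shape $\lambda(\ell) = (\ell+1,\ell)$ has a unique removable corner, namely $(2,\ell+1)$, since the box $(1,\ell+1)$ still has the box $(2,\ell+1)$ directly beneath it in the shifted diagram. Hence, for every $T \in \ST(\lambda(\ell))$, the entry $h-1$ sits at $(2,\ell+1)$, whose residue is $\ell-1$. In particular $\nu_{h-1}=\ell-1\ne\ell=\nu_h$ whenever $e(\nu)(v\otimes T)\ne 0$, and this is what makes the choice $\psi_{h-1}=0$ compatible with the defining relations. Moreover, since $(1,\ell+1)$ is the only position of residue $\ell$ inside $\lambda(\ell)$, no index $k\le h-2$ satisfies $\nu_k=\nu_{k+1}=\ell$, because the two occurrences of residue $\ell$ in $\nu$ are at some position $\le h-2$ and at position $h$.

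Granted these observations, the verification shrinks to a short list. For $\psi_{h-1}^2 e(\nu) = \mathcal{Q}_{\nu_{h-1},\nu_h}(x_{h-1},x_h)e(\nu)$, the left-hand side is zero, and with $\mathcal{Q}_{\ell-1,\ell}(u,v)=u+v^2$ the right-hand side equals $(x_{h-1}+x_h^2)e(\nu)$; since $h-1$ lies at $(2,\ell+1)\ne (1,\ell+1)$ we have $x_{h-1}(v\otimes T)=0$, and $x_h^2=\xi_2^2=0$. For $(\psi_{h-1}x_l - x_{s_{h-1}(l)}\psi_{h-1})e(\nu)$, the right-hand side vanishes because $\nu_{h-1}\ne\nu_h$, matching the zero left-hand side. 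For the braid relation at $k=h-2$, the left-hand side is zero and the right-hand side is nontrivial only when $\nu_{h-2}=\nu_h=\ell$, which forces the entry $h-2$ to sit at $(1,\ell+1)$; using $\mathcal{Q}_{\ell,\ell-1}(u,v)=v+u^2$ the right-hand side then collapses to $(x_{h-2}+x_h)(v\otimes T)=(\xi_1+\xi_2)v\otimes T=0$. The commutation of $x_h$ with the other generators follows from $\xi_1\xi_2=\xi_2\xi_1=0$ together with the fact that, by the observation above, $\psi_k$ for $k<h-1$ acts on $v\otimes T$ either by a tableau swap or by zero, which commutes with the action of $x_h$ on the $\mathcal{N}$-factor; finally the cyclotomic relation $x_1e(\nu)=0$ is automatic because $\nu_1=0$ but position $(1,1)\ne(1,\ell+1)$.

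For the second assertion one observes that $\xi_2$ kills $\bigl(\begin{smallmatrix}1\\0\end{smallmatrix}\bigr)$, so $x_h$ annihilates $\widetilde{\mathcal{L}}_\ell$, and the idempotents $e(\nu)$ preserve $\widetilde{\mathcal{L}}_\ell$ by inspection. I expect the main obstacle to be the braid relation in the case $\nu_{h-2}=\ell$, since it requires the precise interplay between the form of $\mathcal{Q}_{\ell,\ell-1}$ and the identity $\xi_1+\xi_2=0$; this coincidence is exactly what dictated the choice of $\xi_2$ in the action of $x_h$.
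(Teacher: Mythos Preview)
Your proof is correct and follows essentially the same route as the paper's own argument: reduce to the relations involving the new generators, use that $\nu_{h-1}=\ell-1\ne\ell=\nu_h$ for every relevant $T$, and single out the braid relation with $(\nu_{h-2},\nu_{h-1},\nu_h)=(\ell,\ell-1,\ell)$ as the only nontrivial check, settled by $(x_{h-2}+x_h)(v\otimes T)=(\xi_1+\xi_2)v\otimes T=0$. Your added combinatorial justification via the unique removable corner of $\lambda(\ell)=(\ell+1,\ell)$ is a clean way to pin down $\nu_{h-1}=\ell-1$, and the rest matches the paper's reasoning.
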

\begin{proof}
Let $\nu = \res(T)*\ell$ and we check the defining relations. By $\eqref{Eq: lambda i}$, we have
$$ \nu_{h-2} = \ell-2 \text{ or } \ell \quad \text{ and } \quad \nu_{h-1} = \ell-1\ne \ell=\nu_h.$$
Then, it is easy to check the defining relations except for
$(\psi_{h-1}\psi_{h-2}\psi_{h-1} - \psi_{h-2}\psi_{h-1}\psi_{h-2})e(\nu)$ when $(\nu_{h-2}, \nu_{h-1}, \nu_{h}) = (\ell,\ell-1,\ell)$.
In this case, we have to prove
$$
\frac{\mathcal{Q}_{\ell, \ell-1}(x_{h-2},x_{h-1}) - \mathcal{Q}_{\ell, \ell-1}(x_{h},x_{h-1})}{x_{h-2}-x_{h}}(v\otimes T)=0.
$$
But the left hand side is equal to
$$ (x_{h-2} + x_{h}) (v\otimes T) = (\xi_1+\xi_2)v \otimes T = 0. $$
Hence, we have checked all the defining relations. Finally,
it is clear that $\widetilde{\mathcal{L}}_\ell$ is stable under the action of $x_h$ and $e(\nu)$, for $\nu\in I^{2\delta}$.
\end{proof}

\begin{defn}
We denote the $\fqH(2\delta)$-module defined in Lemma \ref{S for ell} by $\widehat{\mathcal{S}}_\ell$
and the unique irreducible $\fqH(2\delta)$-submodule by $\mathcal{S}_\ell$.
\end{defn}

By construction, $ \varepsilon_j(\mathcal{S}_i) = \delta_{ij}$, for $0\le i,j\le \ell$.
Hence $\mathcal{S}_0, \dots , \mathcal{S}_\ell$ are pairwise non-isomorphic $\fqH(2\delta)$-modules.
Using Theorem \ref{Thm: crystals}, $\eqref{Eq: num of irr R(2delta)-modules}$ and
Corollary \ref{nonsplit seq}, we have the following lemma.

\begin{lemma} \label{Lem: irr of R(2delta)}
\begin{enumerate}
\item The modules $\mathcal{S}_i$ $(i=0,1, \ldots, \ell)$ form a complete list of irreducible $\fqH(2\delta)$-modules.
\item There is a nonsplit exact sequence
$$  0 \longrightarrow \mathcal{S}_\ell  \longrightarrow \widehat{\mathcal{S}}_\ell \longrightarrow \mathcal{S}_\ell  \longrightarrow 0.  $$
\item For $i, j\in I$, we have
$$ E_j \mathcal{S}_i \simeq \left\{
                              \begin{array}{ll}
                                \mathcal{L}_i & \hbox{ if } i=j \ne \ell, \\
                                 \widetilde{\mathcal{L}}_\ell & \hbox{ if } i=j=\ell, \\
                                0 & \hbox{ otherwise.}
                              \end{array}
                            \right.
  $$
\end{enumerate}
\end{lemma}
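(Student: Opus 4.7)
\medskip

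\noindent\textbf{Proof plan.} For (1), I would first observe that by construction $\varepsilon_j(\mathcal{S}_i)=\delta_{ij}$, so that $\mathcal{S}_0,\ldots,\mathcal{S}_\ell$ are pairwise non-isomorphic irreducible $\fqH(2\delta)$-modules. By the crystal isomorphism $\sB(\Lambda_0)\simeq B(\Lambda_0)\simeq \mathcal{Y}(\Lambda_0)$ from Theorem~\ref{Thm: crystals} and Theorem~1.2, the number of isomorphism classes of irreducible $\fqH(2\delta)$-modules equals $|\mathcal{Y}(\Lambda_0)_{\Lambda_0-2\delta}|=\ell+1$ by \eqref{Eq: num of irr R(2delta)-modules}. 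Thus $\{\mathcal{S}_0,\ldots,\mathcal{S}_\ell\}$ is a complete list.

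For (2), recall from Lemma~\ref{S for ell} that $\widehat{\mathcal{S}}_\ell$ extends the $\fqH(2\delta-\alpha_\ell)$-module $\mathcal{L}_\ell$ and that the submodule $\widetilde{\mathcal{L}}_\ell$ is stable under $x_h$ and the $e(\nu)$ for $\nu\in I^{2\delta}$. Since $x_h$ sends $\mathcal{L}_\ell$ into $\widetilde{\mathcal{L}}_\ell$ (because $\xi_2\mathcal{N}\subseteq \bR(\begin{smallmatrix}1\\0\end{smallmatrix})$), the restriction of the $\fqH(2\delta)$-action to $\widetilde{\mathcal{L}}_\ell$ makes it an $\fqH(2\delta)$-submodule, which must be $\mathcal{S}_\ell$ by construction. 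The quotient $\widehat{\mathcal{S}}_\ell/\widetilde{\mathcal{L}}_\ell$ inherits an $\fqH(2\delta)$-module structure in which $x_h$ acts as $0$; its underlying $\fqH(2\delta-\alpha_\ell)$-module is $\widetilde{\mathcal{L}}_\ell$ by Corollary~\ref{nonsplit seq}, and one checks directly from the definition that this quotient is isomorphic to $\mathcal{S}_\ell$. The sequence is nonsplit because $x_h$ acts nontrivially on $\widehat{\mathcal{S}}_\ell$ while it acts as $0$ on $\mathcal{S}_\ell\oplus\mathcal{S}_\ell$.

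For (3), I would compute $E_j\mathcal{S}_i=e(2\delta-\alpha_j,j)\,\mathcal{S}_i$ directly from the explicit description of the idempotents on basis vectors $v\otimes T$. Because $e(\nu,j)(v\otimes T)=v\otimes T$ iff $\res(T)*i=\nu*j$, the subspace vanishes unless $i=j$; when $i=j$, it equals the whole space of $\mathcal{S}_i$. For $i=j\ne\ell$, the restriction of the $\fqH(2\delta-\alpha_i)$-action (through the natural embedding $\fqH(2\delta-\alpha_i)\hookrightarrow\fqH(2\delta)$ avoiding $\psi_{h-1}$ and $x_h$) reproduces exactly the formulas of \eqref{Eq: Li}, so $E_i\mathcal{S}_i\simeq\mathcal{L}_i$. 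For $i=j=\ell$, the underlying space of $\mathcal{S}_\ell$ is $\widetilde{\mathcal{L}}_\ell$, so $E_\ell\mathcal{S}_\ell\simeq \widetilde{\mathcal{L}}_\ell$.

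The main obstacle is the verification in (2) that the quotient $\widehat{\mathcal{S}}_\ell/\mathcal{S}_\ell$ is really isomorphic to $\mathcal{S}_\ell$ as an $\fqH(2\delta)$-module (not merely equal on restriction to $\fqH(2\delta-\alpha_\ell)$); this requires checking that the extended action of $x_h$ on the quotient matches the zero action prescribed on $\mathcal{S}_\ell$, which follows because $x_h\mathcal{L}_\ell\subseteq\widetilde{\mathcal{L}}_\ell$. Everything else is bookkeeping using the explicit formulas in Lemmas~\ref{S for i} and~\ref{S for ell}, the crystal count \eqref{Eq: num of irr R(2delta)-modules}, and Corollary~\ref{nonsplit seq}.
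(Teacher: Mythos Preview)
Your proposal is correct and follows essentially the same route as the paper. The paper's own proof is extremely terse---just the sentence preceding the lemma, citing $\varepsilon_j(\mathcal{S}_i)=\delta_{ij}$, Theorem~\ref{Thm: crystals}, \eqref{Eq: num of irr R(2delta)-modules}, and Corollary~\ref{nonsplit seq}---and you have correctly unpacked each of these ingredients with the same logic; your extra care in (2) about the quotient (checking $x_h$ acts as zero because $\xi_2\mathcal{N}\subseteq\widetilde{\mathcal{L}}_\ell$) and your direct idempotent computation in (3) are exactly the details the paper leaves implicit.
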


\bigskip

To determine the basic algebra of $\fqH(2\delta)$ and its representation type, we will need a uniserial module $\mathcal{M}$ with
$\Top(\mathcal{M}) \simeq \mathcal{S}_{\ell-1}$ and $\Rad(\mathcal{M}) \simeq \widehat{\mathcal{S}}_\ell$, which we shall construct here.

\begin{defn}
For each $T \in \ST(\lambda(\ell-1))$ with $\res_{h-1}(T) = \ell $, define $T^\vee\in \ST(\lambda(\ell))$
to be the standard tableau obtained from $T$ by moving the node $(1, \ell+2)$ to the node $(2, \ell+1)$.
\end{defn}

\begin{Ex}
The following is an example when $\ell=3$.

$$
\xy
(-6,0)*{T = };
(0,6)*{};(30,6)*{} **\dir{-};
(0,0)*{};(30,0)*{} **\dir{-};
(6,-6)*{};(18,-6)*{} **\dir{-};
(0,0)*{};(0,6)*{} **\dir{-};
(6,-6)*{};(6,6)*{} **\dir{-};
(12,-6)*{};(12,6)*{} **\dir{-};
(18,-6)*{};(18,6)*{} **\dir{-};
(24,0)*{};(24,6)*{} **\dir{-};
(30,0)*{};(30,6)*{} **\dir{-};
(3,3)*{_1}; (9,3)*{_2}; (15,3)*{_3}; (21,3)*{_5}; (27,3)*{_7};
(9,-3)*{_4}; (15,-3)*{_6};
(33,0)*{,};
(52,0)*{T^\vee = };
(60,6)*{};(84,6)*{} **\dir{-};
(60,0)*{};(84,0)*{} **\dir{-};
(66,-6)*{};(84,-6)*{} **\dir{-};
(60,0)*{};(60,6)*{} **\dir{-};
(66,-6)*{};(66,6)*{} **\dir{-};
(72,-6)*{};(72,6)*{} **\dir{-};
(78,-6)*{};(78,6)*{} **\dir{-};
(84,-6)*{};(84,6)*{} **\dir{-};
(63,3)*{_1}; (69,3)*{_2}; (75,3)*{_3}; (81,3)*{_5};
(69,-3)*{_4}; (75,-3)*{_6}; (81,-3)*{_7};
(87,0)*{.};
\endxy
$$
\end{Ex}

\begin{lemma} \label{construction of M}
We change the action of $\psi_{h-1}$ on the $\fqH(2\delta)$-module $\widehat{\mathcal{S}}_\ell \oplus \mathcal{S}_{\ell-1}$ to
\begin{align*}
\psi_{h-1} (v\otimes T) = \left\{
                            \begin{array}{ll}
                              v \otimes T^\vee & \hbox{ if } \res_{h-1}(T) = \ell,  \\
                              0 & \hbox{ otherwise,}
                            \end{array}
                          \right.
\end{align*}
and keep the action of the other generators unchanged.
Then, we obtain an $\fqH(2\delta)$-module.
\end{lemma}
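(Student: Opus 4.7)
The plan is a direct verification of the defining relations of $\fqH(2\delta)$. Since only $\psi_{h-1}$ has been changed, every relation not involving $\psi_{h-1}$ holds automatically on $\widehat{\mathcal{S}}_\ell \oplus \mathcal{S}_{\ell-1}$ by Lemmas~\ref{S for i} and~\ref{S for ell}. It suffices to check the intertwiner relation $\psi_{h-1}e(\nu) = e(s_{h-1}\nu)\psi_{h-1}$, commutation of $\psi_{h-1}$ with the distant $\psi_k$ ($k\le h-3$) and with $x_l$ ($l\ne h-1, h$), the two $x$-crossing relations at $l=h-1,h$, the quadratic relation, and the braid relation involving $\psi_{h-2}$.

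The key combinatorial input is that if $T \in \ST(\lambda(\ell-1))$ satisfies $\res_{h-1}(T) = \ell$, then $h-1$ must sit at a corner of $\lambda(\ell-1)$ and the unique such corner with residue $\ell$ is $(1,\ell+2)$, so the entry $h-1$ of $T$ lies at $(1, \ell+2)$. Consequently $T^{\vee}$ is well-defined, its entry $h-1$ lies at the unique residue-$(\ell-1)$ corner $(2, \ell+1)$ of $\lambda(\ell)$, and entries $1,\dots,h-2$ occupy identical positions in $T$ and $T^{\vee}$. The intertwiner relation is then immediate since applying $s_{h-1}$ to $\res(T)*(\ell-1)$ yields $\res(T^{\vee})*\ell$. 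The distant commutations reduce to $(s_k T)^{\vee} = s_k(T^{\vee})$ and the equivalence that $s_k T$ is standard if and only if $s_k T^{\vee}$ is standard, for $k \le h-3$, together with the fact that $x_l$ (for $l\ne h-1,h$) acts by the same scalar on $v \otimes T$ and $v \otimes T^{\vee}$, since that scalar depends only on whether entry $l$ sits at $(1,\ell+1)$, a condition governed by the unchanged part of the tableau.

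The $x$-crossing relations follow by observing that $x_{h-1}$ acts as $\xi_2$ on $v \otimes T$ and as $0$ on $v \otimes T^{\vee}$, while $x_h$ acts as $0$ on $v \otimes T$ and as $\xi_2$ on $v \otimes T^{\vee}$, producing the required cancellations $\psi_{h-1}x_{h-1} = x_h\psi_{h-1}$ and $\psi_{h-1}x_h = x_{h-1}\psi_{h-1} = 0$. The exceptional case $\nu_{h-1} = \nu_h$ never arises, because the possible corner residues (namely $\ell$ or $\ell-2$ in $\lambda(\ell-1)$, and $\ell$ or $\ell-1$ in $\lambda(\ell)$) preclude this coincidence against $\nu_h \in \{\ell-1, \ell\}$. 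The quadratic relation reduces on the left to $\psi_{h-1}^2 = 0$, since the modified $\psi_{h-1}$ still vanishes on $\widehat{\mathcal{S}}_\ell$, and this is matched on the right by $\mathcal{Q}_{\ell, \ell-1}(x_{h-1}, x_h)(v \otimes T) = (\xi_2^2 + 0) v \otimes T = 0$.

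The braid relation is the main obstacle and requires case analysis by the positions of $h-2$ and $h-1$. The subtlest subcase is $T \in \ST(\lambda(\ell-1))$ with $h-2$ at $(1, \ell+1)$ and $h-1$ at $(1, \ell+2)$: the special $\partial$-action from $\eqref{Eq: Li}$ enters through $\psi_{h-2}(v \otimes T) = \partial v \otimes T$, but after crossing into $\widehat{\mathcal{S}}_\ell$ via $\psi_{h-1}$ the subsequent $\psi_{h-2}$ annihilates the result because $s_{h-2} T^{\vee}$ fails the column-increasing condition on column $\ell+1$, so both triple products vanish. The remaining subcases, where either $h-1$ sits at $(2, \ell)$ (making one intermediate $\psi_{h-1}$ zero) or $h-2$ sits at $(2, \ell)$, are resolved by analogous case analyses. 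In every subcase the right-hand side of the braid relation also vanishes, since analysis of corner residues shows $\nu_{h-2} \ne \nu_h$ throughout.
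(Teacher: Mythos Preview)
Your proof is correct and follows the same direct-verification strategy as the paper. The only notable difference is in the braid relation: rather than your case analysis on the positions of $h-2$ and $h-1$, the paper observes once and for all that $\psi_{h-2}$ acts as zero on the whole of $\widehat{\mathcal{S}}_\ell$ (since $s_{h-2}T'$ is never standard for $T'\in\ST(\lambda(\ell))$), which instantly kills both triple products $\psi_{h-1}\psi_{h-2}\psi_{h-1}$ and $\psi_{h-2}\psi_{h-1}\psi_{h-2}$ on $v\otimes T$.
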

\begin{proof}
It suffices to check the defining relations which involve $\psi_{h-1}$. It also suffices to check the relations on
$v\otimes T$ with $T\in\ST(\lambda(\ell-1))$. Note that $\res_h(T)=\ell-1$.

Suppose that $\nu\ne\res(T)$. If $e(s_{h-1}\nu)\psi_{h-1}(v\otimes T)\ne0$ then we have
$$ \res_{h-1}(T)=\ell, \quad (\nu_h, \nu_{h-1}) =(\res_{h-1}(T^\vee), \res_h(T^\vee))=(\ell-1, \ell) $$
and $\nu_k=\res_k(T^\vee)=\res_k(T)$, for $1\le k\le h-2$. It contradicts $\nu\ne\res(T)$, and we have
$$ \psi_{h-1}e(\nu)(v\otimes T)=e(s_{h-1}\nu)\psi_{h-1}(v\otimes T). $$
If $\nu=\res(T)$ then explicit computation for the cases $\res_{h-1}(T)\ne\ell$ and $\res_{h-1}(T)=\ell$ proves
$\psi_{h-1}e(\nu)(v\otimes T)=e(s_{h-1}\nu)\psi_{h-1}(v\otimes T)$.

To prove the remaining defining relations, we may assume that $\nu=\res(T)$.

Suppose that $1\le k\le h-3$. Then the location of $h-1$ in $s_k T$ is the
same as the location in $T$. It follows that if $\psi_{h-1}\psi_k(v\otimes T)\ne0$ or $\psi_k\psi_{h-1}(v\otimes T)\ne0$
then $\nu_{h-1}=\ell$ and $s_k T$ is standard. Hence we assume that $\nu_{h-1}=\ell$ and $s_k T$ is standard. Then explicit computation shows
$\psi_{h-1}\psi_k(v\otimes T)=\psi_k\psi_{h-1}(v\otimes T)$.

Next we consider the relation for $(\psi_{h-1} x_k - x_{s_{h-1}(k)} \psi_{h-1})e(\nu)$.
Suppose that $k\ne h-1,h$. If $\psi_{h-1}x_k(v\otimes T)\ne0$ or $x_k\psi_{h-1}(v\otimes T)\ne0$
then $k$ is located at $(1,\ell+1)$ and $h-1$ is located at $(1,\ell+2)$. Hence we assume that
$k$ is located at $(1,\ell+1)$ and $h-1$ is located at $(1,\ell+2)$. Then direct computation shows
$$ (\psi_{h-1} x_k - x_k \psi_{h-1})(v\otimes T)=\xi_1v\otimes T^\vee-\xi_1v\otimes T^\vee=0.$$
If $k=h$ then $ (\psi_{h-1} x_h - x_{h-1} \psi_{h-1})(v\otimes T) = - x_{h-1} \psi_{h-1}(v\otimes T)=0$. Suppose that $k=h-1$.
If $\psi_{h-1} x_{h-1}(v\otimes T)\ne0$ or $x_{h} \psi_{h-1}(v\otimes T)\ne0$ then $h-1$ is located at $(1,\ell+2)$. Thus
we assume that $h-1$ is located at $(1,\ell+2)$. Then, recalling the action of $x_h$ on $\widehat{\mathcal{S}_\ell}$, we have
$$ (\psi_{h-1} x_{h-1} - x_{h} \psi_{h-1})(v\otimes T) = \xi_2v\otimes T^\vee - \xi_2v\otimes T^\vee = 0. $$
Hence the relation for $(\psi_{h-1} x_k - x_{s_{h-1}(k)} \psi_{h-1})e(\nu)$ holds.

Since $\psi_{h-1}$ acts as $0$ on $\widehat{\mathcal{S}_\ell}$, we have $\psi_{h-1}^2 (v \otimes T)=0$. On the other hand,
we already know
$$ \psi_{h-1}^2 (v \otimes T)=\mathcal{Q}_{\nu_{h-1},\nu_h} (x_{h-1}, x_h)(v\otimes T) $$
holds if we replace $\psi_{h-1}$ with $0$, because it is one of the defining relations on $\mathcal{S}_{\ell-1}$.
It implies the relation for $\psi_{h-1}^2e(\nu)$.

Finally, we check the relation for $(\psi_{h-1}\psi_{h-2}\psi_{h-1} - \psi_{h-2}\psi_{h-1}\psi_{h-2})e(\nu)$. Note that $\psi_{h-2}$ acts as $0$
on $\widehat{\mathcal{S}_\ell}$. Hence
$(\psi_{h-1}\psi_{h-2}\psi_{h-1} - \psi_{h-2}\psi_{h-1}\psi_{h-2})(v\otimes T)=0$. On the other hand, we have
$\nu_{h-2}=\ell-2$ or $\ell$, and $\nu_h=\ell-1$. Hence $\nu_{h-2}=\nu_h$ does not occur, and the relation
for $(\psi_{h-1}\psi_{h-2}\psi_{h-1} - \psi_{h-2}\psi_{h-1}\psi_{h-2})e(\nu)$ holds.
\end{proof}

\begin{lemma} \label{Lem: M}
Let $\mathcal{M}$ be the $\fqH(2\delta)$-module constructed in Lemma \ref{construction of M}. Then
the radical series of $\mathcal{M}$ is given as follows:
$$ \mathcal{M} \simeq
                        \begin{array}{ll}
                          \mathcal{S}_{\ell-1} \\
                          \mathcal{S}_{\ell} \\
                          \mathcal{S}_{\ell}
                        \end{array}
 $$
\end{lemma}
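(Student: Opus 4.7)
The plan is to establish three facts in sequence: (i) $\widehat{\mathcal{S}}_\ell$ is an $\fqH(2\delta)$-submodule of $\mathcal{M}$ with $\mathcal{M}/\widehat{\mathcal{S}}_\ell\simeq\mathcal{S}_{\ell-1}$; (ii) $\Hom_{\fqH(2\delta)}(\mathcal{M},\mathcal{S}_\ell)=0$; and (iii) the radical filtration is then forced.

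For (i), observe that the modification in Lemma \ref{construction of M} redefines $\psi_{h-1}$ only on vectors $v\otimes T$ with $T\in\ST(\lambda(\ell-1))$, and the value of this new action, namely $v\otimes T^\vee$, already lies in $\widehat{\mathcal{S}}_\ell$. The action of $\psi_{h-1}$ on $\widehat{\mathcal{S}}_\ell$ itself is unchanged and equal to zero by Lemma \ref{S for ell}, while every other generator was already seen to preserve $\widehat{\mathcal{S}}_\ell$. Hence $\widehat{\mathcal{S}}_\ell$ is a submodule of $\mathcal{M}$, and modulo $\widehat{\mathcal{S}}_\ell$ the modification becomes zero, which recovers the original $\mathcal{S}_{\ell-1}$-structure; thus $\mathcal{M}/\widehat{\mathcal{S}}_\ell\simeq\mathcal{S}_{\ell-1}$. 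Combined with Lemma \ref{Lem: irr of R(2delta)}(2), the composition factors of $\mathcal{M}$ are $\mathcal{S}_{\ell-1},\mathcal{S}_\ell,\mathcal{S}_\ell$.

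The heart of the argument is (ii). Suppose $\phi:\mathcal{M}\to\mathcal{S}_\ell$ is nonzero. Since $\Hom(\mathcal{S}_{\ell-1},\mathcal{S}_\ell)=0$, the restriction $\phi|_{\widehat{\mathcal{S}}_\ell}$ is nonzero, and since $\widehat{\mathcal{S}}_\ell$ is uniserial of length $2$ with top and socle both $\mathcal{S}_\ell$, this restriction is (up to scalar) the projection to the top, whose kernel coincides as a vector subspace with $\widetilde{\mathcal{L}}_\ell=\bigoplus_T\bR(\begin{smallmatrix}1\\0\end{smallmatrix})\otimes T$. Choose $T_0\in\ST(\lambda(\ell-1))$ with $h-1$ placed at the corner $(1,\ell+2)$; such $T_0$ exists because $(1,\ell+2)$ is a removable node of residue $\ell$. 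Setting $v=(\begin{smallmatrix}0\\1\end{smallmatrix})$, Lemma \ref{construction of M} gives
\[
\phi(\psi_{h-1}(v\otimes T_0))=\phi(v\otimes T_0^\vee).
\]
Since $v\notin\bR(\begin{smallmatrix}1\\0\end{smallmatrix})$, the element $v\otimes T_0^\vee$ lies outside $\widetilde{\mathcal{L}}_\ell$, whence $\phi(v\otimes T_0^\vee)\ne 0$. On the other hand, $\psi_{h-1}$ annihilates $\mathcal{S}_\ell$ by Lemma \ref{S for ell}, and so $\phi(\psi_{h-1}(v\otimes T_0))=\psi_{h-1}\phi(v\otimes T_0)=0$, a contradiction.

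Given (i) and (ii), $\mathcal{S}_\ell$ has multiplicity zero in $\Top(\mathcal{M})$, while the surjection $\mathcal{M}\twoheadrightarrow\mathcal{S}_{\ell-1}$ and the composition multiplicity $1$ of $\mathcal{S}_{\ell-1}$ force $\Top(\mathcal{M})\simeq\mathcal{S}_{\ell-1}$. Hence $\Rad(\mathcal{M})=\widehat{\mathcal{S}}_\ell$, and then $\Rad^2(\mathcal{M})=\Rad(\widehat{\mathcal{S}}_\ell)=\Soc(\widehat{\mathcal{S}}_\ell)=\mathcal{S}_\ell$ by Lemma \ref{Lem: irr of R(2delta)}(2), yielding the claimed radical series $\mathcal{S}_{\ell-1}/\mathcal{S}_\ell/\mathcal{S}_\ell$. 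The main obstacle is step (ii): one must recognize that $v=(\begin{smallmatrix}0\\1\end{smallmatrix})$ is the correct choice so that $\psi_{h-1}(v\otimes T_0)=v\otimes T_0^\vee$ lands outside $\Soc(\widehat{\mathcal{S}}_\ell)$, which is precisely what rules out any alternative simple quotient.
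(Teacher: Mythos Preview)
Your proof is correct and rests on the same key observation as the paper's: the image $\psi_{h-1}(v\otimes T_0)=v\otimes T_0^\vee$ with $v=(\begin{smallmatrix}0\\1\end{smallmatrix})$ lies outside $\Soc(\widehat{\mathcal{S}}_\ell)=\widetilde{\mathcal{L}}_\ell$. The paper packages this dually: it first establishes $\Soc(\mathcal{M})=\mathcal{S}_\ell$ via nonsplitness, then argues that $\psi_{h-1}$ remains nonzero on $\mathcal{M}/\Soc(\mathcal{M})$, which rules out that quotient being semisimple; you instead compute $\Hom(\mathcal{M},\mathcal{S}_\ell)=0$ directly. These are essentially the same argument viewed from the top versus the socle, and your version is slightly more economical in that it does not require the preliminary socle computation.
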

\begin{proof}
Since the action of $\psi_{h-1}$ on $\mathcal{M}$ is nonzero, we have a nonsplit exact sequence
$$  0 \longrightarrow \widehat{\mathcal{S}_\ell}  \longrightarrow \mathcal{M} \longrightarrow \mathcal{S}_{\ell-1}  \longrightarrow 0.  $$
Hence $\Soc(\mathcal{M}) \subseteq \widehat{\mathcal{S}_\ell}$, and $\Soc(\mathcal{M})=\mathcal{S}_{\ell}$ by Lemma \ref{Lem: irr of R(2delta)}(2).
If $\mathcal{S}_{\ell}$ appeared in $\Top(\mathcal{M})$, then $\mathcal{M}/\Soc(\mathcal{M})$ would be a semisimple module
$\mathcal{S}_{\ell-1}\oplus \mathcal{S}_\ell$. Then, $\psi_{h-1}$ would act as $0$ on $\mathcal{M}/\Soc(\mathcal{M})$.
Recall the definiton of the action:
\begin{align*}
\psi_{h-1} (v\otimes T) = \left\{
                            \begin{array}{ll}
                              v \otimes T^\vee & \hbox{ if } \res_{h-1}(T) = \ell,  \\
                              0 & \hbox{ otherwise.}
                            \end{array}
                          \right.
\end{align*}
It follows that the action of $\psi_{h-1}$ on $\mathcal{M}/\Soc(\mathcal{M})$ is also nonzero. Thus,
$\Top(\mathcal{M})=\mathcal{S}_{\ell-1}$ and $\Rad(\mathcal{M})=\widehat{\mathcal{S}_\ell}$. Then
Lemma \ref{Lem: irr of R(2delta)}(2) implies that $\Rad^2(\mathcal{M})=\Soc(\mathcal{M})$.
\end{proof}

\section{Representation type} \label{Sec: repn type}
In this section, we give a criterion for representation type of $\fqH(\beta)$.
When $\ell=1$, the algebra $\fqH(\beta)$ is of type $A_1^{(1)}$. Erdmann-Nakano theorem \cite{EN02} and the isomorphism theorem
by Brundan-Kleshchev \cite{BK09} and Rouquier \cite{R08} and \cite{AIP13} for arbitrary parameters tell us that $\fqH(\beta)$ is
\begin{enumerate}
\item simple if $\Lambda_0-\beta \in \weyl \Lambda_0$,
\item not semisimple but of finite type if $\Lambda_0-\beta \in \weyl \Lambda_0-\delta$,
\item of tame type if $\Lambda_0-\beta \in \weyl \Lambda_0-2\delta$,
\item of wild type otherwise.
\end{enumerate}
Thus, an interesting question is to find a criterion for the representation type of $\fqH(\beta)$ when $\ell \ge 2$.
From now on, we assume that $\fqH(\beta)$ is of type $D_{\ell+1}^{(2)}$ for $\ell \ge  2$.

\subsection{The algebra $\fqH(\delta)$} In Section \ref{finite case} we showed that $\fqH(\delta)$ is generated by $x_{\ell+1}$ and
the algebra homomorphism $\bR[x]\to \fqH(\delta)$ which sends $x$ to $x_{\ell+1}$ induces
$\bR[x]/(x^2) \simeq \fqH(\delta)$. Thus, $\fqH(\delta)$ is not semisimple but of finite representation type.

\subsection{The algebra $\fqH(2\delta)$} In this subsection, we show that $\fqH(2\delta)$ is special biserial by computing
the radical series of indecomposable projective $\fqH(2\delta)$-modules.

Recall the $\fqH(2\delta-\alpha_i)$-modules $\mathcal{L}_i$ and $\widetilde{\mathcal{L}}_\ell$ defined in Section
\ref{Section: repn of R(2delta-alpha)}. We define
\begin{align*}
\mathcal{P}_i = F_i \mathcal{L}_i, \;\;\text{for $i=0,1,\dots,\ell$}.
\end{align*}
Proposition \ref{Prop: repn type of R(2delta-alpha)} tells that the $\fqH(2\delta-\alpha_i)$-modules $\mathcal{L}_i$ $(0\le i\le \ell)$ are projective.
As the functor $F_i$ sends projective objects to projective objects, $\mathcal{P}_i$ are projective $\fqH(2\delta)$-modules.

Recall the irreducible $\fqH(2\delta)$-modules $\mathcal{S}_i$ from Lemma \ref{Lem: irr of R(2delta)}.
It follows from Lemma \ref{Lem: L_i}(1), Lemma \ref{Lem: irr of R(2delta)}
and the biadjointness of $E_i$ and $F_i$ that
\begin{equation}
\begin{aligned}
\Hom (\mathcal{S}_j, \mathcal{P}_i) &\simeq \Hom (E_i \mathcal{S}_j, \mathcal{L}_i) \simeq \left\{
                                                                                            \begin{array}{ll}
                                                                                              \bR & \hbox{ if } j=i,  \\
                                                                                              0 & \hbox{ if } j \ne i,
                                                                                            \end{array}
                                                                                          \right. \\
\Hom (\mathcal{P}_i, \mathcal{S}_j) &\simeq \Hom (\mathcal{L}_i, E_i \mathcal{S}_j ) \simeq \left\{
                                                                                            \begin{array}{ll}
                                                                                              \bR & \hbox{ if } j=i,  \\
                                                                                              0 & \hbox{ if } j \ne i.
                                                                                            \end{array}
                                                                                          \right.
\end{aligned}
\end{equation}
Thus, $\mathcal{P}_i$ is indecomposable and $\Top(\mathcal{P}_i) \simeq \Soc(\mathcal{P}_i) \simeq \mathcal{S}_i$. In other words,
$\mathcal{P}_i$ is the projective cover of $\mathcal{S}_i$, for $0\le i\le \ell$, and $\fqH(2\delta)$ is weakly symmetric.

\begin{thm} \label{Thm: radical series}
The radical series of $\mathcal{P}_i$ are given as follows:
\begin{align*}
\xy
(0,0)*{\mathcal{P}_0 \simeq}; (8,0)*{\mathcal{S}_0 }; (14,0)*{\mathcal{S}_1 }; (11,-5)*{\mathcal{S}_{0} }; (11,5)*{\mathcal{S}_{0} }; (17,-2)*{,};
(25,0)*{\mathcal{P}_i\simeq}; (34,0)*{\mathcal{S}_{i-1} }; (43,0)*{\mathcal{S}_{i+1} }; (37.5,5)*{\mathcal{S}_{i} }; (37.5,-5)*{\mathcal{S}_{i} }; (60,0)*{(i\ne \ell-1, \ell),};
(80,0)*{\mathcal{P}_{\ell-1}\simeq}; (91,0)*{\mathcal{S}_{\ell-2} }; (100,2.5)*{\mathcal{S}_{\ell} }; (100,-2.5)*{\mathcal{S}_{\ell} }; (96,8)*{\mathcal{S}_{\ell-1} }; (96,-8)*{\mathcal{S}_{\ell-1} };(104,-2)*{,};
(115,0)*{\mathcal{P}_{\ell}\simeq}; (126,2.5)*{\mathcal{S}_{\ell} }; (126,-2.5)*{\mathcal{S}_{\ell-1} }; (136,-2.5)*{\mathcal{S}_{\ell} }; (136,2.5)*{\mathcal{S}_{\ell-1} };
(131,8)*{\mathcal{S}_{\ell} };(131,-8)*{\mathcal{S}_{\ell} };
\endxy
\end{align*}
\end{thm}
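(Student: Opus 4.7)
The plan proceeds in two phases: first compute the composition multiplicities $[\mathcal{P}_i : \mathcal{S}_j]$ via categorification, then pin down the actual Loewy filtration using the explicit modules constructed in Section~\ref{Sec: repns}.

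For Phase~1, I use that $\Top(\mathcal{P}_i) = \Soc(\mathcal{P}_i) = \mathcal{S}_i$ (established just before the theorem) and that $\End(\mathcal{S}_j) = \bR$ to get $[\mathcal{P}_i : \mathcal{S}_j] = \dim\Hom(\mathcal{P}_j, \mathcal{P}_i)$. By biadjointness of $(E_j, F_j)$ this equals $\dim\Hom(\mathcal{L}_j, E_j F_i \mathcal{L}_i)$. When $j = i$, Theorem~\ref{Thm: Ei Fi} together with $E_i \mathcal{L}_i = 0$ (from Lemma~\ref{Lem: L_i}; the $i = \ell$ subcase uses the $\widetilde{\mathcal{L}}_\ell$-filtration of $\mathcal{L}_\ell$) forces $E_i F_i \mathcal{L}_i \simeq \mathcal{L}_i^{\oplus(2+\delta_{i,0})}$. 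When $j \ne i$, the categorical commutation $E_j F_i \simeq F_i E_j$ combined with Lemma~\ref{Lem: L_i} determines the remaining Hom spaces. Combined with $\dim\End(\mathcal{L}_i) = 1$ for $i \le \ell-1$ and $\dim\End(\mathcal{L}_\ell) = 2$ (Proposition~\ref{Prop: repn type of R(2delta-alpha)}), this recovers the composition-factor totals claimed in the diagrams.

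For Phase~2 when $i \in \{0, 1, \ldots, \ell-2\}$, the claim reduces to showing $\Rad^2(\mathcal{P}_i) = \Soc(\mathcal{P}_i)$, i.e.\ the middle layer is semisimple; this can be verified by exhibiting explicit length-two homogeneous quotients of $\mathcal{P}_i$ realizing each Gabriel-quiver neighbour of $\mathcal{S}_i$, in the spirit of~\cite{AP12}, and comparing dimensions. For $i = \ell-1$ I invoke Lemma~\ref{Lem: M}: projectivity of $\mathcal{P}_{\ell-1}$ together with $\Top(\mathcal{M}) = \mathcal{S}_{\ell-1}$ yields a surjection $\mathcal{P}_{\ell-1} \twoheadrightarrow \mathcal{M}$ whose kernel has composition factors $\mathcal{S}_{\ell-2}, \mathcal{S}_{\ell-1}$. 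Since $\Soc(\mathcal{P}_{\ell-1}) = \mathcal{S}_{\ell-1}$ must lie inside the kernel, the kernel is the uniserial $\mathcal{S}_{\ell-2}/\mathcal{S}_{\ell-1}$, and reassembling with $\mathcal{M}$ produces the biserial Loewy series of length four with short arm $\mathcal{S}_{\ell-2}$ and long arm $\mathcal{S}_\ell/\mathcal{S}_\ell$.

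The main obstacle is the $i = \ell$ case, which has Loewy length four and two biserial arms $\mathcal{S}_\ell/\mathcal{S}_{\ell-1}$ and $\mathcal{S}_{\ell-1}/\mathcal{S}_\ell$. Here $\mathcal{P}_\ell = F_\ell \mathcal{L}_\ell$, and applying the exact functor $F_\ell$ to the non-split extension of Corollary~\ref{nonsplit seq} gives
\[
0 \to F_\ell \widetilde{\mathcal{L}}_\ell \to \mathcal{P}_\ell \to F_\ell \widetilde{\mathcal{L}}_\ell \to 0,
\]
which remains non-split because $\mathcal{P}_\ell$ is indecomposable with $\Top = \Soc = \mathcal{S}_\ell$. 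The Phase~1 identity $[\mathcal{L}_\ell] = 2[\widetilde{\mathcal{L}}_\ell]$ in the Grothendieck group combined with the analysis above shows that each $F_\ell \widetilde{\mathcal{L}}_\ell$ has three composition factors $\mathcal{S}_\ell, \mathcal{S}_{\ell-1}, \mathcal{S}_\ell$ arranged in a length-three Loewy series $\mathcal{S}_\ell / \mathcal{S}_{\ell-1} / \mathcal{S}_\ell$. The delicate final step is to verify that the non-splitting of the above sequence genuinely interlocks the two copies into a length-four biserial module with two \emph{distinct} arms rather than collapsing into a uniserial extension of length six; I will establish this by combining the surjection $\mathcal{P}_\ell \twoheadrightarrow \widehat{\mathcal{S}}_\ell$ (from projectivity and $\Top(\widehat{\mathcal{S}}_\ell) = \mathcal{S}_\ell$) with the structure of $\mathcal{M}$ from Lemma~\ref{Lem: M} to pin down the precise gluing pattern of the two arms.
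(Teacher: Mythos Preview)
Your Phase~1 computation of composition multiplicities is correct and matches the paper exactly.

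Phase~2 is where you diverge from the paper, and there is a genuine gap. The paper's key device, which you never invoke, is the \emph{self-duality of the heart}: the algebra $\fqH(2\delta)$ carries the anti-involution fixing all generators, giving a duality $M\mapsto M^\vee=\Hom_\bR(M,\bR)$ on its module category; one checks that the $\mathcal{S}_i$ are self-dual, and since $\fqH(2\delta)$ is weakly symmetric, $\mathcal{P}_i\simeq\mathcal{P}_i^\vee$, so the heart $\Rad(\mathcal{P}_i)/\Soc(\mathcal{P}_i)$ is self-dual. For $i\le\ell-2$ the heart has two non-isomorphic simple factors, so self-duality forces it to be semisimple in one line; no explicit length-two quotients need to be built. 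For $i=\ell-1,\ell$ the paper combines self-duality with the single surjections $\mathcal{P}_{\ell-1}\twoheadrightarrow\mathcal{M}$ and $\mathcal{P}_\ell\twoheadrightarrow\mathcal{M}^\vee$ to determine the Loewy layers.

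Your alternative route is incomplete at several points. For $i\le\ell-2$ you merely assert that suitable length-two quotients ``can be verified\dots in the spirit of~\cite{AP12}''; this is not a construction, and for $i=0$ (where the heart contains $\mathcal{S}_0$ and $\mathcal{S}_1$, both of which admit extensions with $\mathcal{S}_0$) there is no obvious Cartan-matrix argument to rule out a uniserial heart. For $i=\ell-1$, knowing the kernel of $\mathcal{P}_{\ell-1}\twoheadrightarrow\mathcal{M}$ is uniserial $\mathcal{S}_{\ell-2}/\mathcal{S}_{\ell-1}$ does \emph{not} tell you which Loewy layer of $\mathcal{P}_{\ell-1}$ the factor $\mathcal{S}_{\ell-2}$ lies in; the short exact sequence alone is compatible with $\mathcal{S}_{\ell-2}$ sitting in $\Rad^2/\Rad^3$ rather than $\Rad/\Rad^2$. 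The same objection applies to your $\mathcal{P}_\ell$ analysis: the non-split sequence $0\to F_\ell\widetilde{\mathcal{L}}_\ell\to\mathcal{P}_\ell\to F_\ell\widetilde{\mathcal{L}}_\ell\to 0$ together with the uniserial structure of $F_\ell\widetilde{\mathcal{L}}_\ell$ (which itself you assert rather than prove) still does not distinguish the claimed biserial length-four shape from other possible gluings. Incorporating the self-duality of the heart would close all of these gaps simultaneously.
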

\begin{proof}
We first compute $\dim \Hom (\mathcal{P}_i, \mathcal{P}_j)$ to obtain the composition multiplicities.

Suppose that $i\ne j$.
By the biadjointness of $E_i$ and $F_i$ and \cite[Thm.5.1]{KK11}, we have
$$
\Hom(\mathcal{P}_i, \mathcal{P}_j) \simeq \Hom( E_jF_i \mathcal{L}_i, \mathcal{L}_j) \simeq \Hom( F_i E_j \mathcal{L}_i, \mathcal{L}_j) \simeq \Hom(  E_j \mathcal{L}_i, E_i \mathcal{L}_j),
$$
which yields, by  Lemma \ref{Lem: L_i},
$$ \dim \Hom (\mathcal{P}_i, \mathcal{P}_j) = \left\{
                                                \begin{array}{ll}
                                                  2 & \hbox{ if } (i,j) = (\ell-1, \ell), (\ell, \ell-1), \\
                                                  1 & \hbox{ if } j = i\pm 1 \text{ and } (i,j) \ne (\ell-1, \ell), (\ell, \ell-1), \\
                                                  0 & \hbox{ otherwise.}
                                                \end{array}
                                              \right.
$$

Next we consider the case $i=j$. As $E_i \mathcal{L}_i = 0 $ and $ \langle h_i, \Lambda_0 - 2\delta + \alpha_i \rangle >0$, it follows from Theorem \ref{Thm: Ei Fi} that
$$ E_iF_i \mathcal{L}_i \simeq \mathcal{L}_i^{\oplus \langle h_i, \Lambda_0 - 2\delta + \alpha_i \rangle }. $$
Thus, by Lemma \ref{Lem: L_i}, we have
$$ \dim \Hom(\mathcal{P}_i, \mathcal{P}_i) = \dim \Hom( \mathcal{L}_i, E_iF_i \mathcal{L}_i) = \left\{
                                                                                                 \begin{array}{ll}
                                                                                                 3 & \hbox{ if } i =0, \\
                                                                                                 2 & \hbox{ if } i =1, \dots, \ell-1, \\
                                                                                                 4 & \hbox{ if } i =\ell.
                                                                                                 \end{array}
                                                                                               \right.
 $$
Therefore, we have
\begin{align*}
[\mathcal{P}_0] &= 3[\mathcal{S}_0] + [\mathcal{S}_1], \\
[\mathcal{P}_i] &= [\mathcal{S}_{i-1}] + 2[\mathcal{S}_i] + [\mathcal{S}_{i+1}]\quad (i=1, \ldots, \ell-1), \\
[\mathcal{P}_{\ell-1}] &= [\mathcal{S}_{\ell-2}] + 2[\mathcal{S}_{\ell-1}] + 2[\mathcal{S}_{\ell}], \\
[\mathcal{P}_\ell] &= 2[\mathcal{S}_{\ell-1}] + 4[\mathcal{S}_\ell]
\end{align*}
in the Grothendieck group $K_0(\fqH(2\delta)\text{-mod})$.

Recall that the algebra $\fqH(\beta)$ has an anti-involution which fixes all the defining generators elementwise, and we have the
corresponding duality on the category of $\fqH(\beta)$-modules:
$$ M\mapsto M^\vee=\Hom_\bR(M,\bR). $$
It is straightforward to check that $\mathcal{S}_i$ are self-dual, so that Lemma \ref{Lem: M} implies that
$$
\mathcal{M} \simeq \begin{array}{c}
                   \mathcal{S}_{\ell-1} \\
                   \mathcal{S}_{\ell} \\
                   \mathcal{S}_{\ell}
                 \end{array}
\quad \text{ and } \quad
\mathcal{M}^\vee \simeq \begin{array}{c}
                   \mathcal{S}_{\ell} \\
                   \mathcal{S}_{\ell} \\
                   \mathcal{S}_{\ell-1}.
                 \end{array}
$$
Since $\mathcal{P}_i$ is the projective cover of $\mathcal{S}_i$, there are nontrivial surjective homomorphisms
$$ \mathcal{P}_{\ell-1} \twoheadrightarrow \mathcal{M} \quad \text{ and } \quad \mathcal{P}_{\ell} \twoheadrightarrow \mathcal{M}^\vee. $$
As the algebra $\fqH(2\delta)$ is weakly symmetric, the indecomposable projective module $\mathcal{P}_i$
is isomorphic to its dual. Therefore, the heart of $\mathcal{P}_i$ is self-dual, which yields the assertion.
\end{proof}

Recall that a finite dimensional algebra is a {\it special biserial algebra} if the quiver and the relations of its basic algebra
satisfy the following conditions.
\begin{itemize}
\item[(a1)]
For each vertex $i$, the number of incoming arrows is at most $2$.
\item[(a2)]
For each vertex $i$, the number of outgoing arrows is at most $2$.
\item[(b1)]
For each arrow $\alpha$, there is at most one arrow $\beta$ such that $\alpha\beta\ne0$.
\item[(b2)]
For each arrow $\alpha$, there is at most one arrow $\beta$ such that $\beta\alpha\ne0$.
\end{itemize}

\noindent
The notion of special biserial algebras was originally introduced by Skowro\'nski and Waschb\"usch \cite{SW83} in order
to characterize representation-finite algebras which has at most $2$ nonprojective indecomposable direct summands in the middle term of
any almost split sequence.

If the relations of a special biserial algebra are given by monomials in arrows, the algebra is called a {\it string algebra}, and complete
classification of indecomposable modules is known \cite{BR87}. The indecomposable modules are given by
string modules and band modules, where strings and bands are certain walks on the double of the quiver. As we will need some computation of
string modules and the growth rate for the number of bands, we quickly review necessary materials.

A walk on a quiver is a {\it closed walk} if it returns to the initial vertex of the walk. We say that closed walks
$w$ and $w'$ are {\it cyclically equivalent} if $w=w_1\cdots w_n$, where $w_i$ are arrows of the quiver, then
$w'=w_iw_{i+1}\cdots w_nw_1\cdots w_{i-1}$, for some $i$.

\begin{defn}
Let $\bR Q/I$ be a string algebra. For each arrow $\alpha$ in $Q$, we add a new arrow $\alpha^{-1}$ with the opposite direction, and
we denote the resulting quiver by $\widehat{Q}$.

\begin{itemize}
\item[(1)]
A walk $w=w_1\cdots w_n$ $(n\ge0)$ on $\widehat{Q}$ is a {\it string} if $w_{i+1}\ne w_i^{-1}$, for all $i$, and if a subword
$w_iw_{i+1}\cdots w_j$ is a walk on $Q$ or the inverse of a walk on $Q$, then the walk on $Q$ does not vanish in $\bR Q/I$.
We say that $w$ and $w^{-1}=w_n^{-1}\cdots w_1^{-1}$ are equivalent strings.
\item[(2)]
A closed walk $w=w_1\cdots w_n$ $(n\ge1)$ on $\widehat{Q}$ is a {\it band} if all the powers
$w^m$ $(m\ge1)$ are strings and $w$ itself is not cyclically equivalent to a power of a closed walk.
If $w'$ is cyclically equivalent to $w$ or $w^{-1}$, we say that $w$ and $w'$ are equivalent bands.
\end{itemize}
\end{defn}

\begin{Rmk}
It is more common to call the equivalence classes strings and bands.
\end{Rmk}

For a string $w=w_1\cdots w_n$ $(n\ge0)$, we associate an updown diagram. For example, if $w_1, w_2, w_3^{-1}$ and $w_n$ are arrows in $Q$, then

\bigskip
\hspace{1.5cm}
\begin{xy}
(10,-10) *{z_0}="A", (20,0) *{z_1}="B", (54,0) *{\cdots\cdots\cdots},
(30,10) *{z_2}="C", (40,0) *{z_3}="D", (70,0) * {z_{n-1}}="E", (80,10) *{z_n}="F",

\ar  "B";"A"_{w_1}
\ar  "C";"B"_{w_2}
\ar  "C";"D"^{w_3^{-1}}
\ar  "F";"E"^{w_n}
\end{xy}

\bigskip
\noindent
The updown diagram defines the string module $M(w)=\bR z_0\oplus\cdots\oplus\bR z_n$.
More precisely, the action of an arrow $\alpha$ in $Q$ is defined as follows.

\begin{itemize}
\item[(a)]
If $w_i$ is an arrow in $Q$ then $\alpha z_i=\delta_{\alpha, w_i}z_{i-1}$.
\item[(b)]
If $w_i^{-1}$ is an arrow in $Q$ then $\alpha z_{i-1}=\delta_{\alpha, w_i^{-1}}z_i$.
\end{itemize}
The action of vertices on $z_i$ is determined by the end point of $w_i$, which is the same as the initial point of $w_{i+1}$,
in the obvious manner.

Now, we begin by the following corollary of Theorem \ref{Thm: radical series}.

\begin{cor} \label{special biserial}
The quiver of $\fqH(2\delta)$ is given as follows. Moreover, $\fqH(2\delta)$ is a symmetric special biserial algebra.

\bigskip
\hspace{4mm}
\begin{xy}
(10,0) *{\circ}="A", (30,0) *{\circ}="B", (50,0) *{\circ}="C", (90,0) *{\circ}="D", (110,0) *{\circ}="E",
(70,0) *{\cdots\cdots\cdots\cdots},

\ar @(lu,ld) "A";"A"_{\gamma}
\ar @/^/ "A";"B"^{\alpha_1}
\ar @/^/ "B";"A"^{\beta_1}
\ar @/^/ "B";"C"^{\alpha_2}
\ar @/^/ "C";"B"^{\beta_2}

\ar @/^/ "D";"E"^{\alpha_\ell}
\ar @/^/ "E";"D"^{\beta_\ell}
\ar @(rd,ru) "E";"E"_{\delta}
\end{xy}
\end{cor}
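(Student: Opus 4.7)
The plan is to extract both the Gabriel quiver and the special biserial relations of the basic algebra of $\fqH(2\delta)$ directly from the radical filtrations in Theorem \ref{Thm: radical series}, and to upgrade self-injectivity (Proposition \ref{Prop: self-injective}) to symmetricity via the anti-involution fixing the defining generators. Since the number of arrows from $i$ to $j$ in the quiver of the basic algebra equals $\dim\Ext^1(\mathcal{S}_i,\mathcal{S}_j)=[\Rad(\mathcal{P}_i)/\Rad^2(\mathcal{P}_i):\mathcal{S}_j]$, reading the second layer of each radical series yields the loop $\gamma$ and arrow $\alpha_1$ from $\mathcal{P}_0$, arrows $\beta_i,\alpha_{i+1}$ out of each interior $i$ from $\mathcal{P}_i$, arrows $\beta_{\ell-1},\alpha_\ell$ from $\mathcal{P}_{\ell-1}$, and $\beta_\ell$ together with the loop $\delta$ from $\mathcal{P}_\ell$. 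This is exactly the quiver in the statement, and (a1), (a2) are immediate since every vertex has exactly two incoming and two outgoing arrows.

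For (b1) and (b2), any length-two composition $\beta\alpha\cdot e_i\in\Rad^2(\mathcal{P}_i)$ contributes to a specific simple composition factor (at the endpoint of $\beta$) in some higher layer, and this contribution is constrained by the multiplicities listed in Theorem \ref{Thm: radical series}. I will read off vanishings vertex by vertex: from $\mathcal{P}_0$ with $\Rad^2=\mathcal{S}_0$ we get $\alpha_1\gamma=0$ and $\alpha_2\alpha_1=0$; analogous vanishings hold at each interior $\mathcal{P}_i$; from $\mathcal{P}_{\ell-1}$ with its length-$2$ branch through $\mathcal{S}_{\ell-2}$ and length-$3$ branch through the right-hand $\mathcal{S}_\ell$'s, we extract $\beta_{\ell-2}\beta_{\ell-1}=0$ and locate the socle; from $\mathcal{P}_\ell$ with its two parallel length-$3$ branches we extract $\delta^2=0$ and $\beta_{\ell-1}\beta_\ell=0$, together with a scalar proportionality of the two socle representatives $\delta\alpha_\ell\beta_\ell$ and $\alpha_\ell\beta_\ell\delta$. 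Checking all arrows, these relations imply that at most one arrow $\beta$ gives $\beta\alpha\ne0$ and at most one gives $\alpha\beta\ne0$, so both (b1) and (b2) hold. Symmetricity then follows from the anti-involution together with self-injectivity, which gives $\fqH(2\delta)\simeq\Hom_\bR(\fqH(2\delta),\bR)$ as a bimodule.

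The main obstacle will be pinning down the precise relations at the two loops $\gamma$ and $\delta$: the radical series alone only force $\delta^2 e_\ell$ to lie in the same $1$-dimensional slice of $\Rad^2(\mathcal{P}_\ell)/\Rad^3(\mathcal{P}_\ell)$ as $\alpha_\ell\beta_\ell e_\ell$ (and dually at vertex $0$), so one must rule out $\delta^2$ being a nonzero scalar multiple of $\alpha_\ell\beta_\ell$ — otherwise (b1) at $\delta$ would fail. This refined information requires more than composition-multiplicity bookkeeping, and will likely come either from the explicit quiver Hecke relations or from a direct computation using the projective modules $\mathcal{P}_i=F_i\mathcal{L}_i$ constructed in Section \ref{Sec: repns}.
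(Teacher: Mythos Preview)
Your plan for the quiver and the vanishing relations is essentially the paper's: read off arrows from $\Rad(\mathcal{P}_i)/\Rad^2(\mathcal{P}_i)$, then kill length-two paths by composition-factor bookkeeping, and recognize that the single genuinely delicate point is at the loop $\delta$, where the radical series alone cannot separate $\delta^2$ from the other length-two loop through vertex $\ell$. The paper resolves this exactly as you anticipate, by exhibiting the submodules $F_\ell\widetilde{\mathcal{L}}_\ell\subseteq\mathcal{P}_\ell$ and $\mathcal{M}\subseteq\mathcal{P}_\ell$ built in \S\ref{Sec: repns}, which force $\Im(\delta)=\Ker(\delta)$ and $\Im(\alpha_\ell)=\Ker(\beta_\ell)$, hence $\delta^2=0$ and $\alpha_\ell\beta_\ell=0$. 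One small correction: the situation at vertex $0$ is \emph{not} dual to vertex $\ell$. There $\Rad^2(\mathcal{P}_0)=\Soc(\mathcal{P}_0)=\mathcal{S}_0$ already, so $\gamma\alpha_1=0$ and $\beta_1\gamma=0$ are forced purely by the radical series (no $\mathcal{S}_1$ sits in $\Rad^2(\mathcal{P}_0)$), and in fact $\gamma^2=\alpha_1\beta_1\neq0$; no extra module-theoretic input is needed at that end.

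The genuine gap is your symmetricity argument. ``Self-injective plus an anti-involution fixing the generators'' does \emph{not} in general yield a bimodule isomorphism $A\simeq DA$: the anti-involution only gives an algebra isomorphism $A\simeq A^{\rm op}$, and self-injectivity only gives a one-sided isomorphism $A\simeq DA$; neither produces the required bimodule map, and there exist weakly symmetric (even special biserial) algebras that are not symmetric---the paper itself remarks on this right after the corollary. You must do something specific to this algebra. The paper's route is to write down an explicit basis of the basic algebra (built from the paths $e_i,\gamma,\gamma^2,\alpha_i,\beta_i,\alpha_\ell\delta,\delta\beta_\ell,\beta_\ell\alpha_\ell,\alpha_\ell\delta\beta_\ell,\delta\beta_\ell\alpha_\ell,\ldots$) and define a trace $\Tr$ by declaring it to be $1$ on each socle generator and $0$ elsewhere, then check block by block that the Gram matrix $(\Tr(b_1b_2))$ is symmetric and nonsingular. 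Along the way one needs $\delta\beta_\ell\alpha_\ell=\beta_\ell\alpha_\ell\delta$ (not merely proportionality), which the paper obtains by showing $\End(\mathcal{P}_\ell)\simeq e(\nu)\fqH(2\delta)e(\nu)$ is the commutative local algebra $\bR[x,y]/(x^2,y^2-axy)$ via the dimension count in~\eqref{endomorphism ring}. Either reproduce this trace construction or invoke a general theorem that cyclotomic quiver Hecke algebras are symmetric; the bare anti-involution will not close the gap.
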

\begin{proof}
It follows from Theorem \ref{Thm: radical series} that
$$ \dim \Hom(\mathcal{P}_i, \Rad(\mathcal{P}_j) / \Rad^2(\mathcal{P}_j) )  = \left\{
                                                                               \begin{array}{ll}
                                                                                 1 & \hbox{ if } j = i \pm 1, \hbox{ or } i=j=0,\ell, \\
                                                                                 0 & \hbox{ otherwise. }
                                                                               \end{array}
                                                                             \right.
 $$
We fix lifts of $\Hom(\mathcal{P}_i, \Rad(\mathcal{P}_j) / \Rad^2(\mathcal{P}_j) )$ to $\Hom(\mathcal{P}_i, \mathcal{P}_j )$ as follows.
\begin{align*}
\alpha_i  &= \text{a lift of } \mathcal{P}_{i-1}\twoheadrightarrow \mathcal{S}_{i-1}\subseteq \Rad(\mathcal{P}_{i}) / \Rad^2(\mathcal{P}_{i}), \\
\beta_i  &= \text{a lift of } \mathcal{P}_{i}\twoheadrightarrow \mathcal{S}_{i}\subseteq \Rad(\mathcal{P}_{i-1}) / \Rad^2(\mathcal{P}_{i-1}), \\
\gamma  &= \text{a lift of } \mathcal{P}_{0}\twoheadrightarrow \mathcal{S}_{0}\subseteq \Rad(\mathcal{P}_{0}) / \Rad^2(\mathcal{P}_{0}), \\
\delta  &= \text{a lift of } \mathcal{P}_{\ell}\twoheadrightarrow \mathcal{S}_{\ell}\subseteq\Rad(\mathcal{P}_{\ell}) / \Rad^2(\mathcal{P}_{\ell}).
\end{align*}
Then, $\alpha_i$, $\beta_i$, $\gamma$ and $\delta$ give the quiver of the basic algebra of $\fqH(2\delta)$, and
the relations are
\begin{gather*}
\gamma\alpha_1=0,\;\;\beta_1\gamma=0,\;\;\gamma^2=\alpha_1\beta_1,\quad
\alpha_\ell\beta_\ell=0, \;\; \delta^2=0, \\
\beta_i\alpha_i=\alpha_{i+1}\beta_{i+1},\:\:\text{for $1\le i\le \ell-2$,} \\
\alpha_i\alpha_{i+1}=0,\;\;
\beta_{i+1}\beta_i=0, \;\;\text{for $1\le i\le \ell-1$,}\\
\beta_{\ell-1}\alpha_{\ell-1}=\alpha_\ell\delta\beta_\ell,\;\;
\delta\beta_\ell\alpha_\ell=\beta_\ell\alpha_\ell\delta.
\end{gather*}

Indeed, considering the configuration of the radical series in Theorem \ref{Thm: radical series},
$\gamma\alpha_1=0$, $\beta_1\gamma=0$ and $\alpha_i\alpha_{i+1}=0$, $\beta_{i+1}\beta_i=0$, for $1\le i\le \ell-1$, are clear.
As $F_\ell\widetilde{\mathcal{L}}_\ell$ has composition factors
$$
[F_\ell\widetilde{\mathcal{L}}_\ell]=2[\mathcal{S}_\ell]+[\mathcal{S}_{\ell-1}]
$$
and $\mathcal{P}_\ell=F_\ell\mathcal{L}_\ell$ implies that
$\Top(F_\ell\widetilde{\mathcal{L}}_\ell)\simeq \Top(\mathcal{P}_\ell)$ and $\Soc(F_\ell\widetilde{\mathcal{L}}_\ell)\simeq \Soc(\mathcal{P}_\ell)$,
$$  F_\ell\widetilde{\mathcal{L}}_\ell\; \simeq
                        \begin{array}{ll}
                          \mathcal{S}_{\ell} \\
                          \mathcal{S}_{\ell-1} \\
                          \mathcal{S}_{\ell}
                        \end{array}
 $$
and we have an $\fqH(2\delta)$-submodule of $\mathcal{P}_\ell$ which is isomorphic to $F_\ell\widetilde{\mathcal{L}}_\ell$. It follows that
$$ \Im(\delta)=\Ker(\delta)\simeq F_\ell\widetilde{\mathcal{L}}_\ell $$
and we have $\delta^2=0$. Similarly, we have $\Im(\alpha_\ell)=\Ker(\beta_\ell)\simeq \mathcal{M}$ and
$\Im(\beta_\ell)\simeq \mathcal{M}^\vee$, which implies $\alpha_\ell\beta_\ell=0$. Further, explicit computation shows
\begin{align*}
\Im(\gamma^2)&=\Soc(\mathcal{P}_0)=\Im(\alpha_1\beta_1),\\
\Im(\beta_i\alpha_i)&=\Soc(\mathcal{P}_i)=\Im(\alpha_{i+1}\beta_{i+1}), \hbox{ for $1\le i\le \ell-2$,}\\
\Im(\beta_{\ell-1}\alpha_{\ell-1})&=\Soc(\mathcal{P}_{\ell-1})=\Im(\alpha_\ell\delta\beta_\ell).
\end{align*}
Therefore, $\gamma^2$, $\beta_i\alpha_i$ and $\beta_{\ell-1}\alpha_{\ell-1}$ are nonzero scalar multiple of
$\alpha_1\beta_1$, $\alpha_{i+1}\beta_{i+1}$ and $\alpha_\ell\delta\beta_\ell$, respectively. By adjusting $\alpha_i$'s by nonzero scalar multiples, we
may assume that $\gamma^2=\alpha_1\beta_1$, $\beta_i\alpha_i=\alpha_{i+1}\beta_{i+1}$ and $\beta_{\ell-1}\alpha_{\ell-1}=\alpha_\ell\delta\beta_\ell$ hold.
To show that $\delta\beta_\ell\alpha_\ell=\beta_\ell\alpha_\ell\delta$, it is enough to find that $\End(\mathcal{P}_\ell)$ is commutative.
Let $\nu_0 = (0,1,2, \ldots, \ell) \in I^{\ell+1}$ and $\nu=\nu_0 * \nu_0$. As $e(\nu)\mathcal{S}_\ell\ne0$, we have a surjective map
$\fqH(2\delta)e(\nu)\to \mathcal{P}_\ell$. Thus, $ \mathcal{P}_\ell$ is a direct summand of $\fqH(2\delta)e(\nu)$. Then,
(\ref{endomorphism ring}) below shows that $\End(\fqH(2\delta)e(\nu))$ is a commutative local algebra, and it follows that
$\mathcal{P}_\ell\simeq \fqH(2\delta)e(\nu)$ and $\End(\mathcal{P}_\ell)$ is commutative. Hence, all the relations listed above hold, and
the dimension consideration proves that they give defining relations of the basic algebra of $\fqH(2\delta)$. In particular,
the quiver of $\fqH(2\delta)$ is as claimed, and $\fqH(2\delta)$ is a special biserial algebra.

Next, we construct a trace map on the basic algebra. The basic algebra is
$(4\ell+7)$-dimensional and the following elements in $B_1,\dots,B_{2\ell+1}$ altogether form a basis of the basic algebra, which we denote by $B$:
\begin{itemize}
\item[(1)]
$B_1=\{e_0, \gamma,\gamma^2\}$.
\item[(2)]
$B_{i+1}=\{\alpha_i, \beta_i\}$, for $1\le i\le \ell-1$.
\item[(3)]
$B_{i+\ell}=\{e_i, \alpha_{i+1}\beta_{i+1}\}$, for $1\le i\le \ell-2$.
\item[(4)]
$B_{2\ell-1}=\{e_{\ell-1}, \alpha_\ell\delta\beta_\ell\}$.
\item[(5)]
$B_{2\ell}=\{\alpha_\ell, \beta_\ell, \alpha_\ell\delta, \delta\beta_\ell\}$.
\item[(6)]
$B_{2\ell+1}=\{e_\ell, \delta, \beta_\ell\alpha_\ell, \delta\beta_\ell\alpha_\ell\}$.
\end{itemize}
The basis $B$ has the property that either $b_1b_2=0$ or $b_1b_2\in B$, for $b_1, b_2\in B$.
We define the trace map by values on $B$, and we declare that the nonzero values are
$$
\Tr(\gamma^2)=\Tr(\alpha_2\beta_2)=\cdots=\Tr(\alpha_{\ell-1}\beta_{\ell-1})
=\Tr(\alpha_\ell\delta\beta_\ell)=\Tr(\delta\beta_\ell\alpha_\ell)=1.
$$
Note that $\Tr(b)\ne0$, for $b\in B$, only when the source and the sink of $b$ coincide.
We consider the matrix $(\Tr(b_1b_2))_{b_1,b_2\in B}$. Our task is to show that
it is a nonsingular symmetric matrix. Suppose that the source of $b_1$ is different from the sink of $b_2$. Then
$\Tr(b_1b_2)=0$ and $b_2b_1=0$. If the source of $b_1$ equals the sink of $b_2$,
$\{b_1, b_2\}\subseteq B_i$, for some $i$. Therefore,
it is enough to show that the submatrix $(\Tr(b_1b_2))_{b_1,b_2\in B_i}$ is a nonsingular symmetric matrix.
But it can be checked by direct computation.
\end{proof}

We remark that there exists an example of weakly symmetric special biserial algebras which is not symmetric. See \cite[IV.2.8]{SY11}.

\begin{cor} \label{Cor: fqH(2delta) repn type}
The algebra $\fqH(2\delta)$ is of tame type.
\end{cor}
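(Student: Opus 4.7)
The plan is to combine Corollary~\ref{special biserial}, which establishes that $\fqH(2\delta)$ is a symmetric special biserial algebra, with the classical theorem (Wald--Waschb\"usch, see also Erdmann--Skowro\'nski) that every special biserial algebra has either finite or tame representation type, in particular is never wild. It therefore suffices to rule out the finite case by exhibiting infinitely many pairwise non-isomorphic indecomposable $\fqH(2\delta)$-modules.

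For this I would appeal to the Butler--Ringel classification of indecomposable modules over a string algebra: a single band in $\widehat{Q}$ already produces an infinite family of pairwise non-isomorphic band modules of unbounded dimension, parametrized by pairs $(\lambda,n)\in\bR^\times\times\N$. A convenient candidate is the closed walk $w=\gamma^{-1}\alpha_1\beta_1$ based at the vertex $0$ of the quiver of Corollary~\ref{special biserial}. Verifying that $w$ is a band amounts to three routine checks using the relations listed there: the no-immediate-backtracking condition is automatic for every power $w^m$, since consecutive letters lie alternately in $Q$ and the inverse arrows in an admissible way; every maximal subword of $w^m$ that lies entirely in $Q$ is a copy of $\alpha_1\beta_1=\gamma^2$, which is nonzero, while every maximal subword lying entirely among the inverse arrows is the single letter $\gamma^{-1}$, whose underlying arrow $\gamma$ is nonzero; finally, $w$ is not cyclically equivalent to a power of a shorter closed walk, since the only candidate shorter cycle is the loop $\gamma$, and $\gamma$ itself fails to be a band because $\gamma^3=(\gamma\alpha_1)\beta_1=0$ in the basic algebra.

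The main technical point, modest as it is, is the verification that all powers $w^m$ remain strings, which reduces to the single computation $\alpha_1\beta_1=\gamma^2\ne0$. The rest is a direct application of the Butler--Ringel classification together with the non-wildness of special biserial algebras, and combining these with Corollary~\ref{special biserial} forces $\fqH(2\delta)$ to be of tame representation type. I expect no genuine obstacle: both halves of the argument are standard consequences of the structure we have already established, and the role of the new proof is simply to organize them.
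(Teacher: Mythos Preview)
Your overall strategy is sound: Corollary~\ref{special biserial} together with the Wald--Waschb\"usch theorem reduces the problem to ruling out finite representation type, and exhibiting a band would indeed accomplish this via Butler--Ringel. However, the specific band $w=\gamma^{-1}\alpha_1\beta_1$ does not work. The Butler--Ringel construction is formulated for \emph{string} algebras, and the basic algebra $A$ of $\fqH(2\delta)$ is special biserial but not a string algebra, because of the commutativity relation $\gamma^2=\alpha_1\beta_1$. If you pass to the associated string algebra $B=A/\Soc(A)$ (whose nonprojective indecomposables coincide with those of $A$), then $\alpha_1\beta_1$ lies in $\Soc(Ae_0)$ and hence vanishes in $B$; the directed subword $\alpha_1\beta_1$ of $w$ is therefore zero and $w$ is not even a string. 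If instead you try to build the band module directly over $A$ from the walk $w$, the relation $\gamma^2=\alpha_1\beta_1$ fails: on the putative module $M(w,\lambda,1)$ one has $\gamma^2=0$ while $\alpha_1\beta_1$ acts by $\lambda\neq 0$. Either way the construction collapses. The check ``$\alpha_1\beta_1=\gamma^2\ne 0$'' you perform is precisely the symptom: a directed subword equal to one side of a commutativity relation is forbidden in a string for a special biserial algebra.

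The easy fix is to move to the other end of the quiver, where the relations are monomial: the closed walk $\delta^{-1}\beta_\ell\alpha_\ell$ is a genuine band in $B$, since $\beta_\ell\alpha_\ell\neq 0$ in $B$ while $\delta^2=\alpha_\ell\beta_\ell=\delta\beta_\ell\alpha_\ell=\beta_\ell\alpha_\ell\delta=0$ are all monomial relations. The paper itself bypasses bands altogether and observes that $\End(\mathcal{P}_\ell)=e_\ell A e_\ell\cong\bR[x,y]/(x^2,y^2)$ with $x=\delta$ and $y=\beta_\ell\alpha_\ell$; this local algebra is representation-infinite (it is the tame Kronecker-type algebra), and since an idempotent truncation of a representation-finite algebra is again representation-finite, $A$ cannot be of finite type. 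That argument is shorter and avoids the string/band bookkeeping entirely.
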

\begin{proof}
Since biserial algebras have tame or finite representation type \cite{CB95}
(for special biserial algebras it was already proved in \cite{WW85}), we
show that $\fqH(2\delta)$ is not of finite type.
But if we set $x=\delta$ and $y=\beta_\ell\alpha_\ell$, where $\delta, \alpha_\ell, \beta_\ell$
are as in Corollary \ref{special biserial}, we have $\End (\mathcal{P}_\ell)= \bR[x,y]/ (x^2, y^2)$.
The Kronecker algebra $\bR[x,y]/ (x^2, y^2)$ is of tame type, so that $\fqH(2\delta)$ is not of finite type.
\end{proof}

%
%

Based on \cite{DS87}, \cite{S89} and \cite{PS91}, Erdmann and Skowro\'nski has obtained detailed description
of the Auslander-Reiten quiver of a special biserial self-injective algebra \cite[Thm.2.1, Thm.2.2]{ES92}.
In particular, \cite[Thm.2.2]{ES92} says that if there are infinitely many equivalence classes of bands then its stable Auslander-Reiten quiver
has finitely many nonhomogeneous tubes $\Z A_\infty/\langle \tau^p\rangle$,
infinitely many homogeneous tubes, and infinitely many components of the form $\Z A^\infty_\infty$, where $\tau$ is the Auslander-Reiten translation.
We may give more explicit result for $\fqH(2\delta)$ in type $D^{(2)}_{\ell+1}$ ($\ell \ge 2$), as follows.

\begin{prop} \label{AR components}
The stable Auslander-Reiten quiver of $\fqH(2\delta)$ has
\begin{itemize}
\item[(i)]
the unique nonhomogeneous tube $\Z A_\infty/\langle \tau^{2\ell+1}\rangle$ consisting of string modules,
\item[(ii)]
one homogeneous tube consisting of string modules,
\item[(iii)]
infinitely many homogeneous tubes consisting of band modules,
\item[(iv)]
infinitely many components of the form $\Z A^\infty_\infty$ consisting of string modules.
\end{itemize}
\end{prop}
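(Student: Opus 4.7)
The plan is to apply the Erdmann--Skowro\'nski structure theorem \cite[Thm.~2.2]{ES92} on stable Auslander--Reiten quivers of self-injective special biserial algebras that are not of finite representation type. By Corollary~\ref{special biserial} the algebra $\fqH(2\delta)$ is symmetric and special biserial, and by Corollary~\ref{Cor: fqH(2delta) repn type} it is of tame (hence infinite) representation type and so admits infinitely many equivalence classes of bands. The cited theorem therefore provides the coarse picture of (i)--(iv) in one stroke: finitely many nonhomogeneous tubes $\Z A_\infty/\langle \tau^p\rangle$ consisting of string modules, infinitely many homogeneous tubes, and infinitely many components of the form $\Z A^\infty_\infty$ consisting of string modules, with the band modules sitting precisely in homogeneous tubes and all non-periodic components being of type $\Z A^\infty_\infty$.

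To extract the precise invariants I would work directly with the Butler--Ringel string/band classification for $\fqH(2\delta)$, using the quiver and relations in Corollary~\ref{special biserial} and the Loewy structure of the projectives from Theorem~\ref{Thm: radical series}. Since $\fqH(2\delta)$ is symmetric we have $\tau=\Omega^2$, so the periods of periodic string modules are computed by iterating syzygies. Starting from the simple modules $\mathcal{S}_i$ and following $\Omega^2$ through the biserial data, using the socle identifications $\gamma^2=\alpha_1\beta_1$, $\beta_i\alpha_i=\alpha_{i+1}\beta_{i+1}$ for $1\le i\le \ell-2$, and $\beta_{\ell-1}\alpha_{\ell-1}=\alpha_\ell\delta\beta_\ell$, one should verify that the $\tau$-periodic string modules arrange into a single $\tau$-orbit of length $2\ell+1$, producing the unique nonhomogeneous tube of (i); separately, the relation $\delta^2=0$ produces one $\tau$-periodic string module of period $1$, giving the unique string-module homogeneous tube of (ii).

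The main obstacle is this syzygy computation: one must follow $\Omega^2$ through the biserial combinatorics and verify both that the period of the elementary orbit is exactly $2\ell+1$ and that no further nonhomogeneous tube arises. The key ingredients are the two bridges $\gamma^2=\alpha_1\beta_1$ at vertex $0$ and $\beta_{\ell-1}\alpha_{\ell-1}=\alpha_\ell\delta\beta_\ell$ at vertex $\ell-1$, which glue the two ends of the quiver into a single periodic orbit, together with the relation $\delta^2=0$, which detaches $\delta$ as the origin of a separate length-$1$ periodic orbit. Once this cycle-counting is settled, items (iii) and (iv) are immediate consequences of the tame-infinite type and \cite[Thm.~2.2]{ES92}: the infinitely many band-equivalence classes produce infinitely many homogeneous tubes of band modules, and the remaining non-periodic string modules fill infinitely many components of type $\Z A^\infty_\infty$.
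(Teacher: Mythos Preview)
Your overall strategy matches the paper's: reduce to the Erdmann--Skowro\'nski structure theorem \cite[Thm.~2.2]{ES92} and then compute the periods of the periodic string modules by hand. But there is a genuine gap in your first step. You write that tame (infinite) representation type ``so admits infinitely many equivalence classes of bands.'' This implication fails: a self-injective special biserial algebra of infinite tame type can be domestic and have only finitely many band equivalence classes, in which case \cite[Thm.~2.1]{ES92} rather than \cite[Thm.~2.2]{ES92} applies and the non-periodic components need not be of the form $\Z A^\infty_\infty$. The paper does not take this for granted: it passes to the string algebra $B=A/\Soc(A)$ and explicitly exhibits, for every prime $q$, a family of $(2^q-2)/q$ inequivalent bands built from two basic closed walks $a$ and $b$ on $\widehat{Q}$, thereby proving that $A$ is not of polynomial growth and that \cite[Thm.~2.2]{ES92} is the relevant case. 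You need an argument of this kind before you can invoke the theorem.

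On the period computation, your plan to use $\tau=\Omega^2$ and trace syzygies of simples is reasonable in principle, but it is only a sketch and you have not carried it out. The paper takes a different and more direct route: it works in $B=A/\Soc(A)$ (whose Auslander--Reiten quiver coincides with the stable Auslander--Reiten quiver of $A$), identifies the modules on the mouths of the tubes as the $Be_i/B\alpha$ via the Butler--Ringel description \cite[p.~170]{BR87}, and then computes $\tau$ on these explicitly using the known almost split sequences for string modules. This yields one orbit of length $1$ (from $Be_\ell/B\delta$) and one orbit of length $2\ell+1$ containing all the remaining $Be_i/B\alpha$. Your relation-based heuristic for why the period should be $2\ell+1$ is suggestive but not a proof; an actual step-by-step walk through the $\tau$-orbit, as the paper does, is what is required.
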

\begin{proof}
Let $A=\bR Q/I$ be the basic algebra of $\fqH(2\delta)$ given in Corollary \ref{special biserial} and define
$B=A/\Soc(A)$. Then $B$ is the string algebra defined by the quiver $Q$ and the relations
\begin{align*}
\gamma^2=0, \;\;\gamma\alpha_1&=0,\;\; \beta_1\gamma=0, \\
\alpha_i\beta_i&=0,\:\:\text{for $1\le i\le \ell$,} \\
\beta_i\alpha_i&=0,\;\;\text{for $1\le i\le \ell-1$,} \\
\alpha_i\alpha_{i+1}&=0,\;\;\text{for $1\le i\le \ell-1$,} \\
\beta_{i+1}\beta_i&=0, \;\;\text{for $1\le i\le \ell-1$,}\\
\delta\beta_\ell\alpha_\ell=\beta_\ell\alpha_\ell\delta&=0,\;\;\alpha_\ell\delta\beta_\ell=0,\;\; \delta^2=0.
\end{align*}

First we show that $B$ has infinitely many equivalence classes of bands. Then it implies that $A$ is not of polynomial growth by the same argument
as in \cite[Lem.1]{S87}, and we are in the case of \cite[Thm.2.2]{ES92}.

We define closed walks $a$ and $b$ as follows. Note that $\beta_\ell\alpha_\ell\ne0$ in $B$.
\begin{align*}
a=\delta\alpha_\ell^{-1}\beta_\ell^{-1},\;\;
b&=\delta\alpha_\ell^{-1}\beta_{\ell-1}\cdots\alpha_2^{-1}\beta_1\gamma^{-1}\alpha_1\beta_2^{-1}\cdots\alpha_{\ell-1}\beta_\ell^{-1}
\quad &\text{if $\ell$ is even.}\\
a=\delta^{-1}\beta_\ell\alpha_\ell,\;\;
b&=\delta^{-1}\beta_\ell\alpha_{\ell-1}^{-1}\cdots\alpha_2^{-1}\beta_1\gamma^{-1}\alpha_1\beta_2^{-1}\cdots\alpha_\ell
\quad &\text{if $\ell$ is odd.}
\end{align*}

We claim that $2^q-2$ closed walks $\{ x_1\cdots x_q \mid x_i=a\; \text{or}\; b \}\setminus\{ a^q, b^q\}$, for a prime $q$, give
$(2^q-2)/q$ equivalence classes of bands.
It is clear that their powers are strings. Thus, it suffices to show that it is not cyclically equivalent to a power of its subword.
Suppose that it is the case. Then, we may assume that the subword starts with $\delta$ or $\delta^{-1}$. But $\delta^\pm$ appear
only as the first alphabet of $a$ and $b$, so that the subword is of the form $x_ix_{i+1}\cdots x_{i+r-1}$ and $r$ divides $q$.
Thus, $r=1$ or $r=q$ because $q$ is a prime, and $r=1$ does not occur because we have excluded $a^q$ and $b^q$. We have proved that
$x_1\cdots x_q$ cannot be cyclically equivalent to a power of its subword, and we have the claim.
In particular, we have constructed infinitely many equivalence classes of bands.
Since all the nonprojective indecomposable $A$-modules are indecomposable $B$-modules and
the stable Auslander-Reiten quiver of $A$ coincides with the Auslander-Reiten quiver of $B$, we consider
the latter. Then, by the general result \cite[Thm.2.2]{ES92}, it only remains to determine the period of
string $B$-modules in components of the form $\Z A_\infty/\langle \tau^p \rangle$, for $p\ge1$,
since band $B$-modules belong to homogeneous tubes \cite[p.165]{BR87}. But string modules on the boundary of such components are known \cite[p.170]{BR87}.
They are $\{ Be_i/B\alpha \mid \text{$i$ is the end point of an arrow $\alpha$}\}$.
Thus, it suffices to determine the period of these string modules with respect to $\tau=D\Tr$.

First of all, $Be_\ell/B\delta$ is exceptional and it has period $1$. Indeed,
$$
Be_\ell\to Be_\ell\twoheadrightarrow
Be_\ell/B\delta=\frac{\langle e_\ell, \delta, \alpha_\ell, \beta_\ell\alpha_\ell, \alpha_\ell\delta\rangle_\bR}{\langle \delta,\alpha_\ell\delta\rangle_\bR}
=M(\beta_\ell\alpha_\ell),
$$
where $Be_\ell\to Be_\ell$ is given by $x\mapsto x\delta$, is the projective resolution, and direct computation shows
$\tau(M(\beta_\ell\alpha_\ell))\simeq M(\beta_\ell\alpha_\ell)$.

Similarly, we compute the $\tau$-orbit through $Be_0/B\gamma=M(\beta_1)$.
Suppose that $\ell$ is even. The first almost split sequence to consider is
$$
0 \to M(\alpha_1) \to M(\beta_1\gamma^{-1}\alpha_1)\to M(\beta_1) \to 0
$$
and we have $\tau(Be_0/B\gamma)=Be_1/B\beta_2$.
The rule to construct almost split sequences for the modules of the form $Be_i/B\alpha$ is that
if $Be_i/B\alpha=M(u)$ and $\tau(Be_i/B\alpha)=M(v)$, then we have the almost split sequence
$$ 0 \to M(v) \to M(u\alpha^{-1}v)\to M(u) \to 0. $$

\noindent
We proceed further as follows.
\begin{align*}
\tau(Be_0/B\gamma)&=M(\alpha_1)=Be_1/B\beta_2, \\
\cdots & \cdots \\
\tau^{\frac{\ell}{2}}(Be_0/B\gamma)&=M(\alpha_{\ell-1})=Be_{\ell-1}/B\beta_\ell, \\
\tau^{\frac{\ell}{2}+1}(Be_0/B\gamma)&=M(\delta\beta_\ell)=Be_{\ell-1}/B\alpha_{\ell-1}, \\
\tau^{\frac{\ell}{2}+2}(Be_0/B\gamma)&=M(\beta_{\ell-2})=Be_{\ell-3}/B\alpha_{\ell-3}, \\
\cdots & \cdots \\
\tau^{\ell}(Be_0/B\gamma)&=M(\beta_2)=Be_1/B\alpha_1.
\end{align*}

We continue the computation and obtain
\begin{align*}
\tau^{\ell+1}(Be_0/B\gamma)&=M(\gamma)=Be_0/B\beta_1, \\
\tau^{\ell+2}(Be_0/B\gamma)&=M(\alpha_2)=Be_2/B\beta_3, \\
\cdots & \cdots \\
\tau^{\frac{3}{2}\ell}(Be_0/B\gamma)&=M(\alpha_{\ell-2})=Be_{\ell-2}/B\beta_{\ell-1},\\
\tau^{\frac{3}{2}\ell+1}(Be_0/B\gamma)&=M(\alpha_\ell\delta)=Be_\ell/B\alpha_\ell, \\
\tau^{\frac{3}{2}\ell+2}(Be_0/B\gamma)&=M(\beta_{\ell-1})=Be_{\ell-2}/B\alpha_{\ell-2}, \\
\cdots & \cdots \\
\tau^{2\ell}(Be_0/B\gamma)&=M(\beta_3)=Be_2/B\alpha_2, \\
\tau^{2\ell+1}(Be_0/B\gamma)&=M(\beta_1)=Be_0/B\gamma.
\end{align*}
Therefore, the period of $Be_0/B\gamma$ is $2\ell+1$, and the $\tau$-orbit contains all the string modules
of the form $Be_i/B\alpha$ but $Be_\ell/B\delta$. Suppose that $\ell$ is odd. Then the computation is entirely similar
and we reach the same conclusion. We have proved that
there exists unique nonhomogeneous tube $\Z A_\infty/\langle \tau^{2\ell+1}\rangle$ consisting of string modules,
and the unique homogeneous tube consisting of string modules.
\end{proof}

By the main theorem given below, Proposition \ref{AR components} gives
the shape of stable Auslander-Reiten quivers for finite quiver Hecke algebras of tame type in type $D^{(2)}_{\ell+1}$.

\subsection{ The algebra $\fqH(3\delta)$} In this subsection, we show that $\fqH(3\delta)$ is of wild type.

Let $\nu_0 = (0,1,2, \ldots, \ell) \in I^{\ell+1}$ and, for $r = 0,1,2$ and $0\le s\le\ell$, or $r=3$ and $s=0$, set
\begin{align*}
\nu^{r,s} &= \underbrace{\nu_0 * \cdots * \nu_0}_{r} * (0,1,2, \ldots, s-1), \\
\beta^{r,s} &= r \delta + \alpha_0 + \alpha_1 + \cdots + \alpha_{s-1},
\end{align*}
where $\nu*\nu'$ is the concatenation of $\nu$ and $\nu'$.
Note that $\nu^{r,s} \in I^{\beta^{r,s}}$. Using the residue pattern $\eqref{Eq: residue pattern}$,
one can show that there exists only one standard tableau $T$ such that $\res(T) = \nu^{r,s} $.
Thus, by Theorem \ref{Thm: dimension formula}, we have
\begin{align} \label{Eq: dim of fqH(beta r s)}
\dim e(\nu^{r,s}) \fqH( \beta^{r,s} )  e(\nu^{r,s}) = 2^r.
\end{align}

\begin{prop} \label{Prop: fqH(3delta)}
The algebra $e(\nu^{3,0}) \fqH(3\delta) e(\nu^{3,0})$ is isomorphic to the quotient algebra of $\bR[x,y,z]$ by the ideal generated by
$x^2$, $y^2 - a xy$ and $z^2 - b xy - c yz - d yz$ for some $a,b,c,d \in \bR$.
\end{prop}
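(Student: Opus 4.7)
The plan is to exhibit three commuting generators of the spherical subalgebra, establish relations of the claimed form, and use a dimension count to force the isomorphism. Set $e = e(\nu^{3,0})$ and take
\[
x = x_{\ell+1}\,e, \qquad y = x_{2(\ell+1)}\,e, \qquad z = x_{3(\ell+1)}\,e,
\]
the elements arising from the last position (residue $\ell$) of each of the three consecutive copies of $\nu_0$ in $\nu^{3,0}$. They mutually commute by $x_kx_l = x_lx_k$, and each is homogeneous of graded degree $(\alpha_\ell|\alpha_\ell) = 2$. From \eqref{Eq: dim of fqH(beta r s)} with $r = 3$ we have $\dim e\fqH(3\delta)e = 8$.

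For any scalars $a,b,c,d \in \bR$, a routine check shows that the commutative quotient $\bR[x,y,z]/(x^2,\ y^2 - axy,\ z^2 - bxy - cyz - dxz)$ is $8$-dimensional with basis $\{1, x, y, z, xy, xz, yz, xyz\}$: every higher monomial reduces into this span using the relations (for instance $x^2=0$ kills any power of $x$ past the first, and then $y^2 = axy$, $z^2 = bxy+cyz+dxz$ force $y^3=z^3=0$ together with analogous vanishings in degrees $\ge 4$ past $xyz$). Hence, once one shows that $x, y, z$ generate $e\fqH(3\delta)e$ and satisfy three relations of the indicated shape, the matching of dimensions concludes the proof.

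The specific shape of the relations should come by iteratively exploiting the cyclotomic relation. For $x^2 = 0$: the identity $x_1 e = 0$ (valid because $\nu_1 = 0$ and $\langle h_0, \Lambda_0\rangle = 1$) can be transported through $\psi_1, \ldots, \psi_\ell$ exactly as in the identification $\fqH(\delta)\simeq \bR[x]/(x^2)$ in \S\ref{finite case}, the first $\ell+1$ residues of $\nu^{3,0}$ forming a single copy of the sequence handled there. For $y^2 = axy$: the same reduction applied to the first two blocks (positions $1$ through $2(\ell+1)$) realizes the two-block spherical subalgebra $e(\nu_0*\nu_0)\fqH(2\delta)e(\nu_0*\nu_0)$, which is $4$-dimensional by \eqref{Eq: dim of fqH(beta r s)} with $r=2$, and a parallel analysis (analogous to the commutative-local-algebra computation underlying the proof of Corollary \ref{special biserial}) forces $y^2 \in \bR xy$. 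For $z^2$ no such constraint is available, so one obtains only $z^2 \in \bR\langle xy, xz, yz\rangle$ by graded degree.

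The main obstacle is making the generation claim rigorous: one must rule out that elements $x_k e$ for $k \neq \ell+1, 2(\ell+1), 3(\ell+1)$, or products involving $\psi_k$'s, contribute directions to $e\fqH(3\delta)e$ not already in $\bR\langle 1, x, y, z, xy, xz, yz, xyz\rangle$. The cleanest route is probably to construct a faithful representation of $\fqH(3\delta)$ on a suitable induced module of the form $F_0 F_1 \cdots F_\ell$ applied to a known irreducible, and use the braid-type and commutation relations of the KLR presentation to rewrite any candidate element of $e\fqH(3\delta)e$ in terms of $x, y, z$ alone.
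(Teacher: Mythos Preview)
Your overall strategy matches the paper's: the same generators $x=x_{\ell+1}e$, $y=x_{2\ell+2}e$, $z=x_{3\ell+3}e$, the same target presentation, and the same dimension count via \eqref{Eq: dim of fqH(beta r s)}. The genuine gap is the one you flag yourself: you do not prove that $x,y,z$ generate $e\fqH(3\delta)e$, equivalently that the eight monomials $\{e,x,y,z,xy,xz,yz,xyz\}$ are linearly independent. Without this, neither the relation $z^2\in\bR\langle xy,xz,yz\rangle$ nor the final isomorphism follows; your ``by graded degree'' argument presupposes that the degree-$4$ piece of $e\fqH(3\delta)e$ is spanned by $xy,xz,yz$, which is exactly what is at stake.

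The paper does not attack generation by rewriting KLR words or by a faithful module, as you propose. Instead it uses Theorem~\ref{Thm: Ei Fi} (the Kang--Kashiwara $E_iF_i$ isomorphisms). Since $\langle h_s,\Lambda_0-\beta^{r,s}\rangle>0$, these give $(\fqH(\beta^{r,s}),\fqH(\beta^{r,s}))$-bimodule monomorphisms which, after cutting by the idempotents $e^{r,s}$, become
\[
e^{r,s}\fqH(\beta^{r,s})e^{r,s}\hookrightarrow e^{r,s+1}\fqH(\beta^{r,s+1})e^{r,s+1},\qquad
\bigoplus_{k=0}^{1}\bigl(e^{r,\ell}\fqH(\beta^{r,\ell})e^{r,\ell}\bigr)\otimes t^{k}\hookrightarrow e^{r+1,0}\fqH(\beta^{r+1,0})e^{r+1,0},
\]
with $t\mapsto x_{(r+1)(\ell+1)}e^{r+1,0}$. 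By \eqref{Eq: dim of fqH(beta r s)} both sides have the same dimension at every step, so each injection is an isomorphism. Iterating from $\fqH(\delta)=\bR[x]/(x^2)$ gives the presentation directly, and generation comes for free. This is the missing idea; your alternative route via an explicit faithful representation would be considerably more laborious.
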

\begin{proof}
We set $e^{r,s} = e(\nu^{r,s}) $, for $r = 0,1,2$ and $0\le s\le \ell$, or $r=3$ and $s=0$.
By a direct computation, we have
$$ \langle h_s, \Lambda_0 - \beta^{r,s} \rangle = \left\{
                                                                                    \begin{array}{ll}
                                                                                      1 & \hbox{ if } s = 0,\dots, \ell-1, \\
                                                                                      2 & \hbox{ if } s = \ell.
                                                                                    \end{array}
                                                                                  \right.
 $$
Thus, Theorem \ref{Thm: Ei Fi} tells that there are $( \fqH(\beta^{r,s}), \fqH(\beta^{r,s}) )$-bimodule monomorphisms
\begin{align*}
\fqH(\beta^{r,s}) &\hookrightarrow e( \beta^{r,s}, s ) \fqH(\beta^{r,s+1}) e(\beta^{r,s}, s)\quad(0\le s\le \ell-1), \\
\bigoplus_{k=0}^1 \fqH(\beta^{r,\ell}) \otimes t^k &\hookrightarrow e( \beta^{r,\ell}, \ell ) \fqH(\beta^{r+1,0}) e(\beta^{r,\ell}, \ell)\quad(s=\ell),
\end{align*}
which yield $( e^{r,s}\fqH(\beta^{r,s})e^{r,s},\ e^{r,s} \fqH(\beta^{r,s}) e^{r,s} )$-bimodule monomorphisms
\begin{equation}  \label{Eq: fqH(3delta) embedding}
\begin{aligned}
e^{r,s} \fqH(\beta^{r,s}) e^{r,s} & \hookrightarrow  e^{r,s+1}  \fqH(\beta^{r,s+1}) e^{r,s+1}\quad(0\le s\le \ell-1),\\
\bigoplus_{k=0}^1 \left( e^{r,\ell} \fqH(\beta^{r,\ell}) e^{r,\ell} \right) \otimes t^k & \hookrightarrow  e^{r+1,0}  \fqH(\beta^{r+1,0}) e^{r+1,0}
\quad(s=\ell).
\end{aligned}
\end{equation}

We first consider the algebra $e^{1,0} \fqH(\beta^{1,0}) e^{1,0}$. Since $\fqH(\beta^{1,0})=\fqH(\delta)$ and $e(\nu^{1,0})$ is the unit element of $\fqH(\delta)$, we have
$$ e^{1,0} \fqH(\beta^{1,0}) e^{1,0} = \fqH(\delta) = \bR[x]/(x^2),\;\;\text{for $x=x_{\ell+1}e^{1,0}$}. $$
Since $ \dim e^{1,s} \fqH( \beta^{1,s} )  e^{1,s} = 2 $ by $\eqref{Eq: dim of fqH(beta r s)}$, the first monomorphism of $\eqref{Eq: fqH(3delta) embedding}$ gives
$$ e^{1,0} \fqH(\beta^{1,0}) e^{1,0} \simeq e^{1,1} \fqH(\beta^{1,1}) e^{1,1} \simeq \cdots \simeq e^{1,\ell} \fqH(\beta^{1,\ell}) e^{1,\ell}, $$
and it follows that
\begin{align} \label{Eq: isom 1}
e^{1,\ell} \fqH(\beta^{1,\ell}) e^{1,\ell} = \bR[x]/(x^2),\;\;\text{for $x=x_{\ell+1}e^{1,\ell}$}.
\end{align}
Let us consider the second monomorphism of $\eqref{Eq: fqH(3delta) embedding}$, for $r=1$. It is
\begin{align} \label{Eq: embedding 1}
\bigoplus_{k=0}^1 \left( e^{1,\ell} \fqH(\beta^{1,\ell}) e^{1,\ell} \right) \otimes t^k  \hookrightarrow  e^{2,0}  \fqH(\beta^{2,0}) e^{2,0},
\end{align}
where $f \otimes t^k$, for $f \in e^{1,\ell} \fqH(\beta^{1,\ell}) e^{1,\ell} $, maps to $f x_{2\ell+2}^k e^{2,0} $. Note that
it is not only a bimodule homomorphism but an algebra homomorpphism
since $x_{\ell+1}$ commutes with $x_{2\ell+2}$ and the monomorphism is induced by the algebra homomorphism
$$
e^{1,\ell} \fqH(\beta^{1,\ell}) e^{1,\ell}\otimes \bR[t]  \longrightarrow  e^{2,0}  \fqH(\beta^{2,0}) e^{2,0}.
$$
It follows from
$$  2 \dim e^{1,\ell} \fqH( \beta^{1,\ell} )  e^{1,\ell} = 4 = \dim e^{2,0} \fqH( \beta^{2,0} )  e^{2,0}$$
that the embedding $\eqref{Eq: embedding 1}$ is an isomorphism of algebras.
Therefore, using $\eqref{Eq: isom 1}$ and the fact that $e^{2,0} \fqH( \beta^{2,0} )  e^{2,0}$ is graded, we can conclude
\begin{align}\label{endomorphism ring}
e^{2,0} \fqH(\beta^{2,0}) e^{2,0} = \bR[x,y]/(x^2, y^2-axy)
\end{align}
for some $a\in \bR$, where $x=x_{\ell+1}e^{2,0}$ and $y=x_{2\ell+2}e^{2,0}$.

In the same manner, as $ \dim e^{2,s} \fqH( \beta^{2,s} )  e^{2,s} = 4 $ by $\eqref{Eq: dim of fqH(beta r s)}$, the first monomorphism of $\eqref{Eq: fqH(3delta) embedding}$ gives
$$ e^{2,0} \fqH(\beta^{2,0}) e^{2,0} \simeq e^{2,1} \fqH(\beta^{2,1}) e^{2,1} \simeq \cdots \simeq e^{2,\ell} \fqH(\beta^{2,\ell}) e^{2,\ell}, $$
which implies
\begin{align} \label{Eq: isom 2}
e^{2,\ell} \fqH(\beta^{2,\ell}) e^{2,\ell} \simeq  \bR[x,y]/(x^2, y^2-axy).
\end{align}
Then, because of the dimension equality
$$  2 \dim e^{2,\ell} \fqH( \beta^{2,\ell} )  e^{2,\ell} = 8 = \dim e^{3,0} \fqH( \beta^{3,0} )  e^{3,0}, $$
the second monomorphism of $\eqref{Eq: fqH(3delta) embedding}$
\begin{align*} 
\bigoplus_{k=0}^1 \left( e^{2,\ell} \fqH(\beta^{2,\ell}) e^{2,\ell} \right) \otimes t^k  \hookrightarrow  e^{3,0}  \fqH(\beta^{3,0}) e^{3,0},
\end{align*}
sending $f \otimes t^k$ to $f x_{3\ell+3}^k e^{3,0} $ ($f \in e^{2,\ell} \fqH(\beta^{2,\ell}) e^{2,\ell} $) is an isomorphism of algebras.
Therefore, the assertion follows from $\eqref{Eq: isom 2}$ and the fact that $e^{3,0} \fqH( \beta^{3,0} )  e^{3,0}$ is graded.
\end{proof}

\begin{cor} \label{Cor: fqH(3delta) is wild}
The algebra $\fqH(3\delta)$ is of wild type.
\end{cor}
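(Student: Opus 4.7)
The plan is to invoke Proposition \ref{Prop: fqH(3delta)} to reduce to a concrete eight-dimensional corner algebra, and then to exhibit a wild quotient of it. Setting $A := e(\nu^{3,0})\fqH(3\delta)e(\nu^{3,0})$, Proposition \ref{Prop: fqH(3delta)} identifies $A$ with the commutative local $\bR$-algebra
$$ B \;=\; \bR[x,y,z]/(x^2,\; y^2 - axy,\; z^2 - bxy - cxz - dyz) $$
for some scalars $a,b,c,d \in \bR$. Since every defining relation lies in $(x,y,z)^2$, the images of $x,y,z$ span $\mathfrak{m}/\mathfrak{m}^2$ freely, so $B$ is local with $\dim_\bR \mathfrak{m}/\mathfrak{m}^2 = 3$.

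The next step is to pass to the quotient
$$ \bar B \;=\; B/\mathfrak{m}^2 \;\simeq\; \bR[x,y,z]/(x,y,z)^2, $$
a four-dimensional local algebra whose radical squares to zero. A module over $\bar B$ is determined by the pair of vector spaces $(U, V) = (M/\mathfrak{m}M, \mathfrak{m}M)$ together with three arbitrary $\bR$-linear maps $U \to V$ (the actions of $x$, $y$, $z$); in other words, $\bar B$-modules are the same as representations of the three-Kronecker quiver (two vertices connected by three parallel arrows). The three-Kronecker quiver is a classical example of a wild hereditary algebra, so $\bar B$ is of wild representation type.

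Finally, I would combine two standard inheritance principles for representation type: (i) if $C$ is a quotient algebra of $D$ and $C$ is wild, then so is $D$, because every $C$-module is a $D$-module via the canonical projection; and (ii) if $e \in R$ is an idempotent and $eRe$ is wild, then $R$ is wild, since the functor $N \mapsto Re \otimes_{eRe} N$ gives a fully faithful representation embedding preserving indecomposability. Applying (i) to the surjection $A = B \twoheadrightarrow \bar B$ yields wildness of $A$, and then applying (ii) with $e = e(\nu^{3,0})$ yields wildness of $\fqH(3\delta)$.

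The only step that is not entirely formal is the verification that $\bR[x,y,z]/(x,y,z)^2$ is wild, and this is the main obstacle only in the sense that one must correctly identify the three-Kronecker pattern inside its module category; everything else is a direct application of Proposition \ref{Prop: fqH(3delta)} together with the two well-known inheritance principles for representation type.
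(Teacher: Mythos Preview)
Your proposal is correct and follows the same route as the paper: reduce to the corner algebra $A=e(\nu^{3,0})\fqH(3\delta)e(\nu^{3,0})$ via Proposition~\ref{Prop: fqH(3delta)}, kill $\mathfrak{m}^2$ to land in $\bR[x,y,z]/(x,y,z)^2=\bR[x,y,z]/(x^2,y^2,z^2,xy,yz,zx)$, and invoke wildness of that quotient plus the two inheritance principles. The only cosmetic difference is that the paper cites Ringel for wildness of the quotient while you unpack it as the three-Kronecker quiver; the argument is otherwise identical.
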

\begin{proof}
Let $A = e(\nu^{3,0}) \fqH(3\delta) e(\nu^{3,0})$. It is enough to show that $A$ is wild. By Proposition \ref{Prop: fqH(3delta)}, there is a surjective homomorphism
$$ A \twoheadrightarrow \bR[x,y,z] / (x^2, y^2, z^2, xy, yz, zx ). $$
Since the algebra $\bR[x,y,z] / (x^2, y^2, z^2, xy, yz, zx )$ is wild \cite[(1.2)]{Ringel75}, so is $A$.
\end{proof}

\subsection{ Representation type of $\fqH(\beta)$} In this subsection, we show our main theorem, which tells the representation type of $\fqH(\beta)$.

\begin{lemma}[{\cite[Prop.2.3]{EN02}}]
\label{Lem: reduction to critical rank}
Let $A$ and $B$ be finite dimensional $\bR$-algebras and suppose that
there exists a constant $C>0$ and functors
$$
F:\;A\text{\rm -mod} \rightarrow B\text{\rm -mod}, \quad
G:\;B\text{\rm -mod} \rightarrow A\text{\rm -mod}
$$
such that, for any $A$-module $M$,
\begin{itemize}
\item[(1)]
$M$ is a direct summand of $GF(M)$ as an $A$-module,
\item[(2)]
$\dim F(M)\le C\dim M$.
\end{itemize}
Then, if $A$ is wild, so is $B$.
\end{lemma}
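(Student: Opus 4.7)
The plan is to assume $A$ is wild and transport a representation embedding from $A$-mod into $B$-mod via $F$. Recall that for a finite-dimensional $\bR$-algebra $X$, wildness is equivalent to the existence of an additive functor $\Phi:\bR\langle u,v\rangle\text{-mod}\to X\text{-mod}$ preserving indecomposability and reflecting isomorphism classes. Fix such a functor $\Phi$ for $A$, and consider the composition $F\circ\Phi:\bR\langle u,v\rangle\text{-mod}\to B\text{-mod}$; I would aim to derive a contradiction from the assumption that $B$ is not wild.

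The core is a parameter-counting argument. Fix a large $n$ and pick a two-parameter family $\{V_{\lambda,\mu}\}_{(\lambda,\mu)\in \bR^2}$ of pairwise non-isomorphic indecomposable $n$-dimensional $\bR\langle u,v\rangle$-modules (built from a generic pair of matrices). Since $\Phi$ is a representation embedding, $\{\Phi(V_{\lambda,\mu})\}$ is a pairwise non-isomorphic two-parameter family of indecomposable $A$-modules of some fixed dimension $d$. By hypothesis (2), the $B$-modules $F(\Phi(V_{\lambda,\mu}))$ all have dimension at most $Cd$. If $B$ were not wild, then by Drozd's tame--wild dichotomy the indecomposable $B$-modules of dimension at most $Cd$ fall into at most a one-parameter family (up to finitely many exceptions); by Krull--Schmidt the isomorphism classes of \emph{all} $B$-modules of dimension at most $Cd$ then form at most a one-parameter set of direct sum types. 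A Chevalley-type parameter count therefore yields a positive-dimensional subvariety $S\subseteq \bR^2$ on which $[F(\Phi(V_{\lambda,\mu}))]$ is constant.

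Applying $G$, the isomorphism class of $GF(\Phi(V_{\lambda,\mu}))$ is also constant on $S$; call this common $A$-module $N$. By hypothesis (1), every $\Phi(V_{\lambda,\mu})$ with $(\lambda,\mu)\in S$ is an indecomposable direct summand of $N$. But a single finite-dimensional $A$-module $N$ has only finitely many pairwise non-isomorphic indecomposable summands (Krull--Schmidt), while $S$ is uncountable and the $\Phi(V_{\lambda,\mu})$ are pairwise non-isomorphic by construction. This contradiction forces $B$ to be wild.

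The principal obstacle is justifying the parameter-counting step rigorously: one needs Drozd's tame--wild dichotomy in the form "not wild implies at most one-parameter behaviour on bounded-dimension modules" in order to declare that a two-parameter source must pinch into a one-parameter target, yielding the positive-dimensional fibre $S$. Once this dichotomy is invoked, the remainder of the argument is elementary: bound (2) localizes the image of $F$ to a bounded-dimension slice of $B$-mod, and bound (1) together with Krull--Schmidt applied to the fixed module $N$ forbids an uncountable pairwise non-isomorphic family of indecomposable summands, completing the transfer of wildness from $A$ to $B$.
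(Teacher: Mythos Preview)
Your approach differs from the paper's (which follows Erdmann--Nakano). The paper argues by contraposition: assuming $B$ is tame, one takes the finitely many $(\bR[T],B)$-bimodules parametrizing the indecomposable $B$-modules of dimension at most $Cd$ and transports them to $(\bR[T],A)$-bimodules via $G$; condition~(1) then guarantees that every indecomposable $A$-module of dimension at most $d$ occurs as a direct summand of one of these specialized at some $\lambda\in\bR$, so $A$ is weakly tame and hence not wild.

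Your parameter-counting argument has two genuine gaps. First, the assertion that for tame $B$ ``the isomorphism classes of all $B$-modules of dimension at most $Cd$ form at most a one-parameter set of direct sum types'' is false: even though the \emph{indecomposables} lie in finitely many one-parameter families, a general module of dimension at most $Cd$ is a direct sum of up to $Cd$ indecomposables, and the moduli of such multisets has dimension of order $Cd$, not one. A two-dimensional source therefore need not acquire a positive-dimensional fibre. Second, invoking a Chevalley-type fibre-dimension bound presupposes that $(\lambda,\mu)\mapsto [F(\Phi(V_{\lambda,\mu}))]$ is a constructible map of varieties, but $F$ is an arbitrary functor with no algebraicity hypothesis whatsoever. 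The Erdmann--Nakano route sidesteps both problems by working at the level of bimodules rather than points of a family: applying $G$ to a $(\bR[T],B)$-bimodule yields a $(\bR[T],A)$-bimodule purely by functoriality (the $T$-action is $G$ applied to the $T$-endomorphism), so no geometric regularity of $F$ or $G$ is ever needed.
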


\begin{Rmk}
In \cite[Prop.2.3]{EN02}, the authors use lengths of the modules to state the
result. But they use dimensions of the modules in the proof. Let us
quickly review their proof. We prove that if $B$ is tame then $A$ is
weakly tame. That is,
we show that there are finitely many $(\bR[T],A)$-bimodules such that,
for any indecomposable $A$-module $M$ with $\dim M\le d$,
$M$ is a direct summand of one of the bimodules tensored with
$\bR[T]/(T-\lambda)$, for some $\lambda\in \bR$. As $B$ is assumed to be
tame, we have finitely many $(\bR[T], B)$-bimodules such that every
indecomposable $B$-module with dimension at most $Cd$ is one of the
bimodules tensored with $\bR[T]/(T-\lambda)$. Thus, if $\dim F(M)$ is
bounded above by $Cd$, there are finitely many $(\bR[T], B)$-bimodules
such that $F(M)$ is a direct summand of one of them tensored with
$\bR[T]/(T-\lambda)$. By applying the functor $G$ to the bimodules, we
obtain finitely many $(\bR[T],A)$-bimodules with the required property.
The reason the authors use lengths is that we can replace $A$ or $B$ with
Morita equivalent algebras freely if we use lengths, and it is good for their
purposes.
\end{Rmk}

\begin{prop}\label{Prop: fqH(k delta)}
The algebras $\fqH(k\delta)$ $( k\ge3 )$ are wild.
\end{prop}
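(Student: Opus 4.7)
The plan is to induct on $k \ge 3$, with base case $k=3$ given by Corollary \ref{Cor: fqH(3delta) is wild}. For the inductive step, assume $\fqH((k-1)\delta)$ is wild and apply Lemma \ref{Lem: reduction to critical rank} with $A=\fqH((k-1)\delta)$, $B=\fqH(k\delta)$, and the compositions
\[
F=F_\ell F_{\ell-1}\cdots F_1 F_0:A\text{-mod}\longrightarrow B\text{-mod}, \qquad
G=E_0E_1\cdots E_{\ell-1}E_\ell:B\text{-mod}\longrightarrow A\text{-mod}.
\]
Each $F_i$ is tensoring with the finite-dimensional bimodule $\fqH(\beta+\alpha_i)e(\beta,i)$, so there is a constant $C$ with $\dim F(M)\le C\dim M$ for every $M$, verifying condition (2) of the lemma.

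To verify condition (1), I peel off the adjacent pairs $E_jF_j$ from the innermost position outward. At the $j$-th step the pair $E_jF_j$ is applied to $F_{j-1}\cdots F_0 M$, an $\fqH(\beta_j)$-module with $\beta_j=(k-1)\delta+\alpha_0+\cdots+\alpha_{j-1}$. Setting $l_j=\langle h_j,\Lambda_0-\beta_j\rangle$, Theorem \ref{Thm: Ei Fi}(1) (after forgetting the grading shifts) gives that $F_{j-1}\cdots F_0 M$ is an ungraded direct summand of $E_jF_jF_{j-1}\cdots F_0 M$ whenever $l_j\ge 1$. A direct check with the Cartan matrix of type $D^{(2)}_{\ell+1}$ gives
\[
l_j=\delta_{j,0}-\sum_{s=0}^{j-1}a_{j,s}=\begin{cases}1 & (0\le j\le \ell-1), \\ 2 & (j=\ell),\end{cases}
\]
so iterating the peeling from $j=\ell$ down to $j=0$ shows that $M$ is a direct summand of $GF(M)$.

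The main obstacle is the combinatorial verification that every $l_j$ is positive, which pins down the correct ordering of the $F_i$'s; for instance the reverse ordering $F=F_0F_1\cdots F_\ell$ fails at the outermost step since $\langle h_\ell,\Lambda_0-(k-1)\delta\rangle=0$, so that Theorem \ref{Thm: Ei Fi} reduces $E_\ell F_\ell$ to $F_\ell E_\ell$ without producing a copy of $M$. Once this combinatorics is in place, Lemma \ref{Lem: reduction to critical rank} yields the induction step and completes the proof.
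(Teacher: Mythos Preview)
Your proof is correct and follows essentially the same approach as the paper: both arguments use the base case $k=3$ from Corollary~\ref{Cor: fqH(3delta) is wild}, the same computation $\langle h_j,\Lambda_0-(k-1)\delta-\alpha_0-\cdots-\alpha_{j-1}\rangle=1$ (or $2$ for $j=\ell$), and Theorem~\ref{Thm: Ei Fi} together with Lemma~\ref{Lem: reduction to critical rank}. The only cosmetic difference is that the paper applies Lemma~\ref{Lem: reduction to critical rank} one $F_i$ at a time (so wildness propagates along the chain $\fqH(k\delta)\to\fqH(k\delta+\alpha_0)\to\cdots\to\fqH((k+1)\delta)$), whereas you compose all $\ell+1$ functors at once and then peel off the $E_jF_j$ pairs; the underlying content is identical.
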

\begin{proof}
For $k\in \Z_{\ge0}$ and $0\le i\le\ell$, we have
$$
\langle h_i, \Lambda_0 - k\delta - \alpha_0- \cdots -\alpha_{i-1}  \rangle = \left\{
                                                                                \begin{array}{ll}
                                                                                  1 & \hbox{ if } i = 0, \dots, \ell-1, \\
                                                                                  2 & \hbox{ if } i = \ell.
                                                                                \end{array}
                                                                              \right.
$$
Thus, the functor $F_i : \fqH( k\delta + \alpha_0+ \cdots + \alpha_{i-1} )\text{-mod} \rightarrow
\fqH( k\delta + \alpha_0+ \cdots + \alpha_{i} )\text{-mod}$ satisfies the assumptions of Lemma \ref{Lem: reduction to critical rank}
by Proposition \ref{Prop: finite-dim} and Theorem \ref{Thm: Ei Fi}.
Hence, if $\fqH( k\delta + \alpha_0+ \cdots + \alpha_{i-1} )$ is wild, so is $\fqH( k\delta + \alpha_0+ \cdots + \alpha_{i} )$.
As $\fqH( 3\delta )$ is wild by Corollary \ref{Cor: fqH(3delta) is wild}, the assertion follows by induction.
\end{proof}

Recall that a weight $\mu$ with $V(\Lambda_0)_\mu \ne 0$ can be written as
$$\mu = \kappa - k \delta$$
 for some $\kappa \in \weyl \Lambda_0$ and $k\in \Z_{\ge0}$ and a weight $\mu$ of the above form always satisfies $V(\Lambda_0)_{\mu} \ne 0$
\cite[(12.6.1)]{Kac90}.
Note that the pair $(\kappa, k)$ is determined uniquely by $\mu$. Then, our main theorem, Theorem \ref{main theorem} below, follows from Proposition \ref{Prop: repn type}, Corollary \ref{Cor: fqH(2delta) repn type}
and Proposition \ref{Prop: fqH(k delta)}.

\begin{thm} \label{main theorem}
For $\kappa \in \weyl \Lambda_0$ and $k\in \Z_{\ge0}$, the finite quiver Hecke algebra $\fqH(\Lambda_0-\kappa+k\delta)$ of type $D_{\ell+1}^{(2)}$ ($\ell \ge 2$) is
\begin{enumerate}
\item simple if $k=0$,
\item of finite representation type but not semisimple if $k=1$,
\item of tame representation type if $k=2$,
\item of wild representation type if $k \ge 3$.
\end{enumerate}
\end{thm}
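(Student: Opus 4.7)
The plan is to reduce each case to one where the dominant weight is a multiple of $\delta$, and then invoke the analyses already carried out in this section. By the discussion just before the theorem, any $\mu=\Lambda_0-\kappa+k\delta$ with $V(\Lambda_0)_\mu\ne 0$ admits $w\in\weyl$ with $\kappa=w\Lambda_0$. Applying Proposition \ref{Prop: repn type}, the algebra $\fqH(\Lambda_0-w\Lambda_0+k\delta)$ has the same representation type and the same number of simple modules as $\fqH(k\delta)$. It therefore suffices to prove the four assertions for $\fqH(k\delta)$ itself.

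The case $k=0$ is immediate since $\fqH(0)\simeq\bR$, and the reduction gives that $\fqH(\Lambda_0-\kappa)$ is semisimple with a unique simple module, hence simple (over the algebraically closed field $\bR$). The case $k=1$ is handled by the computation in Section \ref{finite case}, where the isomorphism $\fqH(\delta)\simeq\bR[x]/(x^2)$ is established; this algebra is local, not semisimple, and of finite representation type. For $k=2$ I would simply cite Corollary \ref{Cor: fqH(2delta) repn type}, which asserts that $\fqH(2\delta)$ is of tame type. For $k\ge3$ I would cite Proposition \ref{Prop: fqH(k delta)}, which gives wildness. Each of these combined with the Weyl-group reduction yields the corresponding clause of the theorem.

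The real substance of the theorem does not lie in this final assembly but in the results that feed into it: the explicit description of the irreducible $\fqH(2\delta-\alpha_i)$-modules $\mathcal{L}_i$ and $\widetilde{\mathcal{L}}_\ell$ in Section \ref{Section: repn of R(2delta-alpha)}; the radical series computation for the projective covers $\mathcal{P}_i=F_i\mathcal{L}_i$ in Theorem \ref{Thm: radical series}, which is the key to identifying $\fqH(2\delta)$ as a symmetric special biserial algebra in Corollary \ref{special biserial}; and the wildness of $\fqH(3\delta)$ established in Corollary \ref{Cor: fqH(3delta) is wild} via the three-generator presentation of the spherical subalgebra $e(\nu^{3,0})\fqH(3\delta)e(\nu^{3,0})$ in Proposition \ref{Prop: fqH(3delta)}, combined with the inductive functorial reduction in Lemma \ref{Lem: reduction to critical rank}. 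Once all these ingredients are in place, Theorem \ref{main theorem} follows by a one-line application, so the only genuine care needed here is to invoke Proposition \ref{Prop: repn type} correctly (in particular, using both the equality of representation types and the equality of numbers of simple modules in the $k=0$ clause to upgrade ``semisimple'' to ``simple'').
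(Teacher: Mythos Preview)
Your proposal is correct and matches the paper's approach exactly: the paper's entire proof is the single sentence preceding the theorem, which cites Proposition~\ref{Prop: repn type}, Corollary~\ref{Cor: fqH(2delta) repn type}, and Proposition~\ref{Prop: fqH(k delta)}, and your write-up simply unpacks this citation case by case. One small caveat on the $k=0$ clause: Proposition~\ref{Prop: repn type} as stated only asserts equality of representation type and of the number of simples, not preservation of semisimplicity, so strictly speaking you should appeal to the underlying derived equivalence from \cite[Cor.~4.8]{AP12} (which does preserve semisimplicity) rather than the proposition's bare statement---but the paper itself glosses over this point as well.
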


We give two remarks. Firstly, if $k=1$ then $\fqH(\Lambda_0-\kappa+\delta)$ is a matrix algebra over $\bR[x]/(x^2)$. To prove this,
observe that $\fqH(\delta)=\bR[x]/(x^2)$ is the Brauer tree algebra with one edge and no exceptional vertex. Then \cite[Thm.4.2]{Ri89} implies
that $\fqH(\Lambda_0-\kappa+\delta)$ is Morita equivalent to the Brauer tree algebra with one edge and no exceptional vertex, which is $\bR[x]/(x^2)$.
Thus, if we denote the dimension of the unique irreducible $\fqH(\Lambda_0-\kappa+\delta)$-module by $d$, we have
$\fqH(\Lambda_0-\kappa+\delta)\simeq \Mat(d,\bR[x]/(x^2))$. 

Secondly, if $k=2$ then $\fqH(\Lambda_0-\kappa+2\delta)$ is a symmetric algebra by \cite[Cor.5.3]{Ri91}.
If the results in \cite{AAC} are correct, then we may conclude that $\fqH(\Lambda_0-\kappa+2\delta)$ is a
symmetric special biserial algebra. Note that the main results in \cite{Po94} are incorrect,
as is mentioned in \cite[Example A.7]{AIP13}. 
Further, $\fqH(\Lambda_0-\kappa+2\delta)$ has the same
stable Auslander-Reiten quiver as $\fqH(2\delta)$, which is given in Proposition \ref{AR components}.

To summarize, if $\fqH(\beta)$ is not of wild representation type, then we know more than its representation type. For example, knowing the stable Auslander-Reiten
quiver implies that we may label indecomposable modules in some sense.


\bibliographystyle{amsplain}


\end{document}